\documentclass[reqno,11pt]{amsart}
\usepackage{amsfonts,amsmath,amssymb}
\usepackage{graphicx}

\setlength{\textwidth}{16.5cm} \setlength{\oddsidemargin}{0cm}
\setlength{\evensidemargin}{0cm}
\numberwithin{equation}{section}

\def\be{{\beta}}

\def\eps{\epsilon}

\def\varep{\varepsilon}

\def\al{\alpha}

\def\R{\mathbb{R}}

\newtheorem{theorem}{Theorem}[section]
\newtheorem{lemma}[theorem]{Lemma}

\newtheorem{proposition}[theorem]{Proposition}
\newtheorem{definition}[theorem]{Definition}
\newtheorem{remark}[theorem]{Remark}

\begin{document}

\title{On the absence of "splash" singularities in the case of two-fluid interfaces}

\author{Charles Fefferman}
\address{Princeton University}
\email{cf@math.princeton.edu}

\author{Alexandru D. Ionescu}
\address{Princeton University}
\email{aionescu@math.princeton.edu}

\author{Victor Lie}
\address{Purdue University}
\email{vlie@math.purdue.edu}

\thanks{The first author is supported in part by NSF grant DMS-0901040. The second author is supported in part by a Packard
Fellowship and NSF grant DMS-1265818. The third author is supported in part by NSF grant DMS-0100932.}

\begin{abstract}
We show that "splash" singularities cannot develop in the case of locally smooth solutions of the two-fluid interfaces in two dimensions. More precisely, we show that the scenario of formation of singularities discovered by Castro--C\'{o}rdoba--Fefferman--Gancedo--G\'{o}mez-Serrano \cite{CaCoFeGaGo} in the case of the water waves system, in which the interface remains locally smooth but self-intersects in finite time, is completely prevented in the case of two-fluid interfaces with positive densities. 
\end{abstract}

\maketitle

\tableofcontents

\section{Introduction}\label{intro}

In this paper we are interested in studying the formation of singularities in the case of two-fluid interfaces in two dimensions, taking both the gravity and the surface tension into account. We will show that the scenarios of singularity formation ("splash" and "splat") discovered by Castro--C\'{o}rdoba--Fefferman--Gancedo--G\'{o}mez-Serrano in \cite{CaCoFeGaGo} in the case of the water wave system are completely prevented in the more stable case of a two-fluid interface. 

To state the equations, assume that we are working on a time interval $I$, and for any $t\in I$ we have an embedded curve (the interface) $z=z(\alpha,t)=(z_1(\alpha,t),z_2(\alpha,t))\in C^4(\mathbb{R}\times I:\mathbb{R}^2)$. We assume that the interface separates the plane into two open regions $\Omega^1(t), \,\Omega^2(t)\subseteq\mathbb{R}^2$. More precisely, we let $\Gamma_t:=\{z(\alpha,t):\alpha\in\mathbb{R}\}$ and assume that the plane $\mathbb{R}^2$ can be written as a disjoint union 
\begin{equation*}
\mathbb{R}^2=\Omega^1(t)\cup\Omega^2(t)\cup\Gamma_t.
\end{equation*}
We assume that $\Omega^1(t)$ and $\Omega^2(t)$ are open, connected, and simply-connected sets, and that $\Gamma_t=\partial\Omega^1(t)=\partial\Omega^2(t)$. Finally, we assume that there is $a\geq 1$ such that $\mathbb{R}\times(-\infty,-a]\subseteq \Omega^1(t)$ and $\mathbb{R}\times[a,\infty)\subseteq \Omega^2(t)$ for all $t\in I$.

\begin{figure}[hb]
\centering
\includegraphics[width=5in]{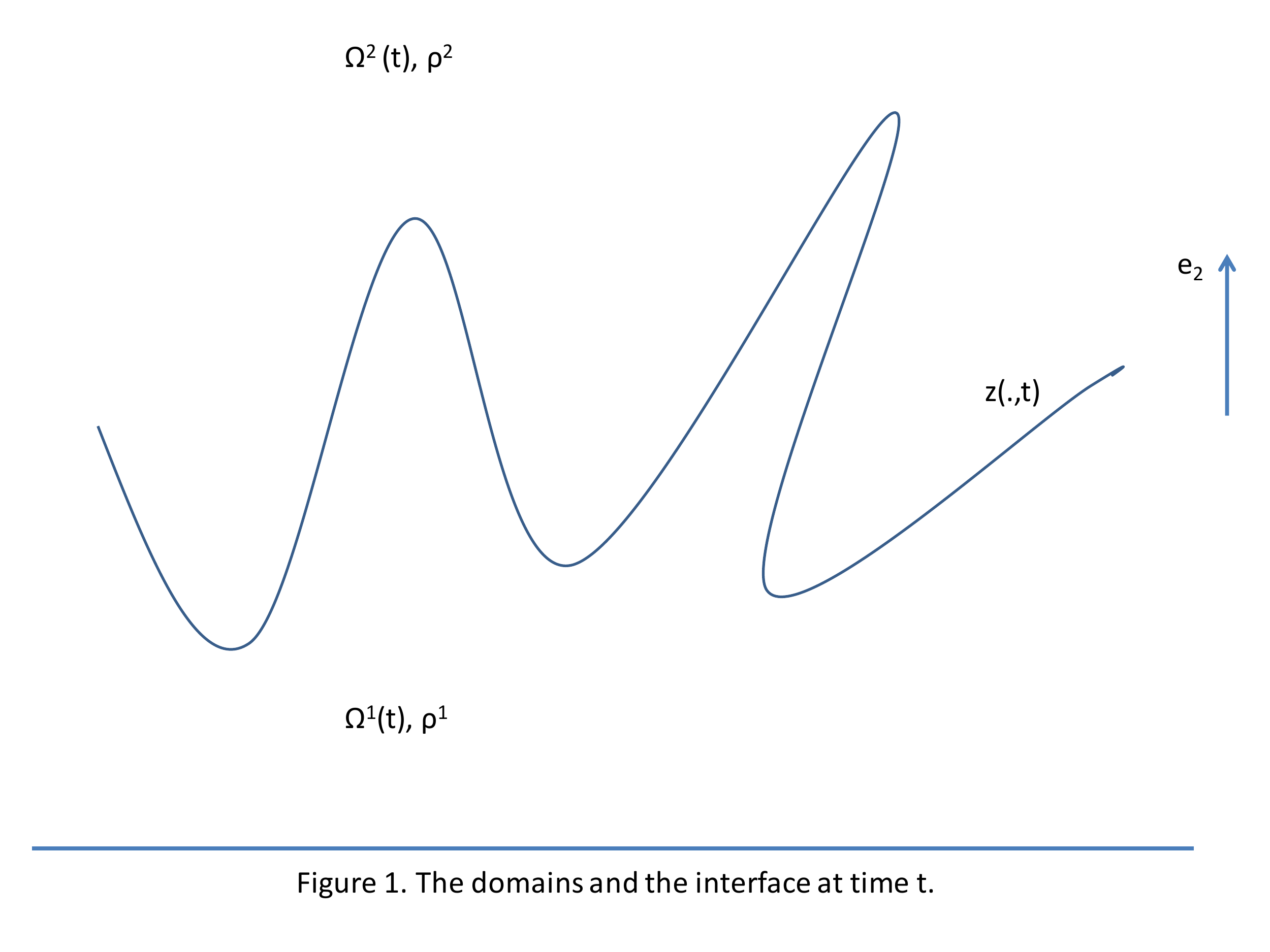}
\end{figure}

We assume that the domains $\Omega^1(t)$ and $\Omega^2(t)$ are filled with fluids with constant densities $\rho^1,\rho^2\geq 0$, and sufficiently smooth velocity fields $u^j(t)\in \cap C^3(\overline{\Omega^j(t)})$, $j\in\{1,2\}$. We assume that both fluids are incompressible in their respective domains, 
\begin{equation}\label{cv1}
\mathrm{div}\,u^j=0\qquad\text{ in }\Omega^j(t),\,j\in\{1,2\},
\end{equation}
and that they evolve according to the Euler equation
\begin{equation}\label{cv2}
\rho^{j} (\partial_t u^{j}+[u^{j}\cdot \nabla]u^{j})=-\nabla P^{j}-g\rho^{j}e_2,\qquad\text{ in }\Omega^j(t),\,j\in\{1,2\},
\end{equation}
where $g\geq 0$ is the gravity constant. We assume that the interface $\Gamma_t$ is a bounding surface (fluid particles do not cross it) which moves with the fluid. In particular, the velocity fields $u^1(t)$ and $u^2(t)$ satisfy suitable compatibility conditions at the interface. More precisely, we assume that
\begin{equation}\label{cv3}
(\partial_tz-u^j|_{\Gamma_t})\cdot(\partial_\alpha z)^\perp=0,\qquad j\in\{1,2\}.
\end{equation}
Finally, the continuity of the stress tensor at the interface gives
\begin{equation}\label{cv4}
(P_2-P_1)|_{\Gamma_t}=-\sigma K(z)=-\sigma\frac{\partial_\alpha z_1\partial_\alpha^2z_2-\partial_\alpha z_2\partial_\alpha^2z_1}{[(\partial_\alpha z_1)^2+(\partial_\alpha z_2)^2]^{3/2}},
\end{equation}
where $\sigma\geq 0$ is the surface tension coefficient.

The equations \eqref{cv1}--\eqref{cv4} are a typical set of equations that describe the evolution of two-fluid interfaces.\footnote{There is a slight imprecision in these equations, namely the function $z(,.t)$ is determined only up to a reparametrization. This is consistent, of course, with the physical interpretation of the equations. To eliminate this imprecision, there are at least two natural parametrizations one could use: Eulerian coordinates in the case when the interface is a graph above a Euclidean space (notice that the vertical direction is determined invariantly by the choice of the vector-field $e_2$ in the gravity term in \eqref{cv2}), or Lagrangian coordinates relative to one of the fluids. In this paper we work invariantly, i.e. we do not make any specific choice of coordinates on the interface.} The equations extend easily to the case of two-dimensional interfaces (i.e. three-dimensional domain $\Omega^1$ and $\Omega^2$), and also to other situations, such as fluids with finite depth. 

The equations depend on four nonnegative parameters: the densities $\rho^1,\rho^2$, the surface tension coefficient $\sigma$, and the gravity coefficient $g$. Several special cases can be considered. For example, the special case $\rho^2=0$ corresponds to the one-fluid interface problem (free surface Euler equations), while the special case $\rho^1=\rho^2$ corresponds to the Kelvin--Helmholtz problem. In the case of the one-fluid interface problem one can further specialize to irrotational flows, the so-called water waves system.

The equations \eqref{cv1}--\eqref{cv4} form a quasi-linear evolution system, and have been studied extensively in recent years. As with most evolution equations, there are at least three natural questions one can consider: (1) local regularity, (2) formation of singularities, and (3) global regularity. We discuss below some of the recent developments:

\subsubsection{Local regularity} The question of local existence of solutions in the case of "nice" data is, of course, the most basic question for any evolution system. In the case of two-fluid models this question is reasonably well understood. For the irrotational case of the water waves problem, and for 2D fluids (and hence 1D interfaces), the earliest
local existence results were obtained by Nalimov \cite{Na}, Yosihara \cite{Yo}, and Craig \cite{Cr} for initial data near equilibrium. In \cite{Wu1} and \cite{Wu2} Wu was able to obtain local-in-time existence of solutions in Sobolev spaces for the gravity water waves system in two and three dimensions, using robust energy methods in the Lagrangian formulation. Following these breakthrough results, there has been considerable amount of work on the local well-posedness theory of one-fluid and two-fluid interfaces, and other related systems.
We refer the reader to \cite{ABZ1,Am,AmMa,ChCoSh,ChHuSt,ChLi,CoSh,IgTaTa,La1,La,Li,Pu,ShZe1,ShZe2,ShZe3} for some of the works on the local well-posedness theory of the system, at  various levels of generality,\footnote{We remark that the surface tension coefficient $\sigma$ has to be strictly positive in order to prevent Kelvin--Helmholtz instabilities and have local well-posedness, in the case of two-fluid interfaces $\rho^1>0$, $\rho^2>0$.} and to the book \cite{LaBook} for a comprehensive treatment of the water waves problem.

\subsubsection{Formation of singularities} There are at least two types of singularities one could imagine: (1) loss of local regularity of the interface, or (2) self-intersection of the interface. It appears that very little is known about the possibility of dynamical loss of regularity of solutions, or even about the possibility of significant norm growth. On the other hand, Castro--C\'{o}rdoba--Fefferman--Gancedo--G\'{o}mez-Serrano \cite{CaCoFeGaGo} were able to construct "splash" singularities, i.e. locally smooth interfaces that self-intersect in finite time, in the 2D water waves problem with no surface tension and irrotational flows ($\rho^2=0$, $\sigma=0$). This was later extended to the case with surface tension $\sigma\geq 0$, see \cite{CaCoFeGaGo2}, and to the case of two-dimensional one-fluid interfaces and nontrivial vorticity \cite{CoSh2}.

The main goal of this paper is to prove that such "splash" singularities cannot develop dynamically in the case of a two-fluid evolution, with $\rho^1>0,\rho^2>0$, at least if one considers only irrotational flows.

\subsubsection{Global regularity} The question of global regularity in the case of two-fluid evolutions \eqref{cv1}--\eqref{cv4} is very much open. In fact, in most cases, it is not known if there are {\it{any}} nontrivial smooth global solutions. The first nontrivial smooth global solutions were constructed independently by Wu \cite{Wu3DWW} and Germain--Masmoudi--Shatah \cite{GMS2} in the case of the gravity water waves problem in 3D (two dimensional interfaces). With our notation this corresponds to the case $\sigma=0$ and $\rho^2=0$. More recently, Germain--Masmoudi--Shatah also constructed global solutions for the capillary water waves equation in dimension 3, which corresponds to $g=0$ and $\rho^2=0$.

In the more difficult\footnote{The mechanism used to prove global existence relies in a critical way on dispersion. These dispersion effects are stronger in the case of two-dimensional interfaces than in one-dimensional interfaces, which leads to faster pointwise decay in time of solutions.} case of one-dimensional interfaces, Wu \cite{WuAG} investigated the long time behavior of small irrotational gravity waves ($\sigma=0$, $\rho^2=0$), and proved almost global existence of solutions. Very recently, the first nontrivial global solutions for the gravity water waves problem in 2D were constructed by Ionescu--Pusateri \cite{IoPu1}, \cite{IoPu} and, slightly later and independently, by Alazard--Delort \cite{AlDe1}, \cite{AlDe2}.

We remark that all the global solutions constructed so far are small and irrotational perturbations of the rest solution, in the water one-fluid models corresponding to  $\rho^2=0$. It would be very interesting to construct (1) smooth global solutions with nontrivial vorticity, or (2) smooth global solutions in the more stable case of two-fluid interfaces $\rho^1>0$, $\rho^2>0$.

\subsection{The main theorem}\label{MainStatement} Our problem can be analyzed in two settings, depending on the conditions imposed on the boundary parametrization $z$:

\begin{itemize}
\item {Asymptotically flat: in this case $z(\alpha,t)-(\alpha,0)\,\rightarrow\,0$ in a suitable sense as $\alpha\,\rightarrow\,\infty$.}

\item {Periodic: in this case $z(\alpha,t)-(\alpha,0)$ is a $2 \pi$ periodic function of $\alpha$.}

\end{itemize}

In what follows we will only consider the periodic case. In this case, letting $\mathrm{Tr}_2:\mathbb{R}^2\to\mathbb{R}^2$, $\mathrm{Tr}_2(z_1,z_2):=(z_1+2\pi,z_2)$, we assume that, for any $t\in I$,
\begin{equation}\label{syst2}
\begin{split}
&z(\alpha+2\pi,t)=\mathrm{Tr}_2(z(\alpha,t)),\qquad \mathrm{Tr}_2(\Omega^1(t))=\Omega^1(t),\qquad \mathrm{Tr}_2(\Omega^2(t))=\Omega^2(t),\\
&u^j(\mathrm{Tr}_2(z),t)=u^j(z,t),\qquad j\in\{1,2\},\,z\in \Omega^j(t),\\
&\|u^1(t)\|_{L^2(\Omega^1(t)\cap [0,2\pi]\times\mathbb{R})}+\|u^2(t)\|_{L^2(\Omega^2(t)\cap [0,2\pi]\times\mathbb{R})}<\infty.
\end{split}
\end{equation}

We are now ready to state our main theorem:

\begin{theorem}\label{MainThm}
Assume $(z,u^1,u^2)$ is a solution of the two-fluid system 
\begin{equation}\label{syst} 
\begin{cases}
&\rho^{j} (\partial_t u^{j}+[u^{j}\cdot \nabla]u^{j})=-\nabla P^{j}-g\rho^{j}e_2,\qquad j\in\{1,2\},\\
&(\partial_tz-u^j|_{\Gamma_t})\cdot(\partial_\alpha z)^\perp=0,\qquad j\in\{1,2\},\\
&(P^2-P^1)|_{\Gamma_t}=-\sigma K(z)=-\sigma\frac{\partial_\alpha z_1\partial_\alpha^2z_2-\partial_\alpha z_2\partial_\alpha^2z_1}{[(\partial_\alpha z_1)^2+(\partial_\alpha z_2)^2]^{3/2}},\\
&\mathrm{div}\,u^j=0,\qquad\mathrm{curl}\,u^j=0,\qquad j\in\{1,2\},
\end{cases}
\end{equation}
in the periodic case \eqref{syst2}, on some time interval $[0,T]$. Assume that $\rho^1,\rho^2>0$ and assume that there is a constant $A>0$ such that

\begin{itemize}

\item {$z:\:\mathbb{R}\times[0,T]\,\rightarrow\,\mathbb{R}^2$ is a regular parametrization, i.e.
\begin{equation}\label{zsmooth}
\|z(\alpha,t)-(\alpha,0)\|_{C^{4} (\mathbb{R}\times[0,T])}\leq A\qquad\text{ and }\inf_{\alpha\in\mathbb{R},\,t\in[0,T]}|z_{\alpha}(\alpha,t)|\geq A^{-1}.
\end{equation}}

\item {The velocity field $u^1$ is smooth on the interface, i.e.
\begin{equation}\label{u1smooth}
\|v^1\|_{C^{3} (\mathbb{R}\times[0,T])}\leq A,
\end{equation}
where $v^1(\alpha,t):=u^1(z(\alpha,t),t)$.}

\item {Letting
\begin{equation}\label{defchart}
F(z)(\al,\be,t):=
\begin{cases}
\frac{|z(\al,t)-z(\be,t)|}{|\al-\be|}\qquad &\text{ if }\alpha\neq\beta,\\
|z_\alpha(\alpha,t)|\qquad &\text{ if }\alpha=\beta,
\end{cases}
\end{equation}
we assume that, at time $t=0$, the parametrization $z$ obeys the chord-arc condition
\begin{equation}\label{charc0}
\inf_{\alpha,\beta\in\R}|F(z)(\al,\be,0)|\geq 1/A.
\end{equation}
}

\item {At time $t=0$, the velocity of the second fluid at the boundary $\Gamma_0$ is an $L^{\infty}$ function, i.e.
\begin{equation}\label{u20bound}
\sup_{\alpha\in\mathbb{R}}|u^2(z(\al,0),0)|\leq A.
\end{equation}}
\end{itemize}

Then the {\bf{qualitative}} assumption
\begin{equation}\label{qual}
\inf_{\alpha,\beta\in\R,\,t\in[0,T]}|F(z)(\al,\be,t)|>0
\end{equation}
implies the {\bf{quantitative}} bound
\begin{equation}\label{quant}
\inf_{\alpha,\beta\in\R,\,t\in[0,T]}|F(z)(\al,\be,t)|\geq c(A,T),
\end{equation}
for some constant $c(A,T)>0$ that depends only on $A$, $T$, and the constants $\sigma,g,\rho^1,\rho^2$.
\end{theorem}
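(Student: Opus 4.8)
The plan is to run a continuity argument in which the only quantities that need to be propagated in time are the chord--arc constant $\de(t):=\inf_{\al,\be\in\R}|F(z)(\al,\be,t)|$ and a fixed norm of the second fluid's boundary velocity, say $M(t):=\|v^2(t)\|_{C^2(\R)}$ with $v^2(\al,t):=u^2(z(\al,t),t)$. By \eqref{zsmooth} and \eqref{u1smooth} the interface $z$ and the first velocity $v^1$ are already controlled by $A$ on all of $[0,T]$; moreover $\de(0)\ge 1/A$ by \eqref{charc0}, and $M(0)\les_A 1$ follows from \eqref{u20bound} together with elliptic regularity on the domain $\Om^2(0)$, which \eqref{charc0} makes uniformly nice. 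The qualitative hypothesis \eqref{qual} guarantees that $\de(t)>0$, hence that $\de(t)$ and $M(t)$ are finite, on all of $[0,T]$; the point of the argument is to bound them using only $A$, $T$ and the physical constants $\sigma,g,\rh^1,\rh^2$. First I would recover the velocity fields from boundary data: writing $u^j=\nab\vphi^j$ with $\vphi^j$ harmonic (periodic and $L^2$ on a period), recover $u^1$ from its Dirichlet trace $v^1$, and recover $u^2$ from its Neumann trace $u^2\cdot n|_{\Ga_t}=\pr_t z\cdot n=v^1\cdot n$ (cf. \eqref{cv3}) together with the conserved energy, which, using that $\Om^2(0)$ is nice and $\|v^2(0)\|_{L^\infty}\le A$, yields a time-uniform bound $\int_{\Om^2(t)}|u^2|^2\le C(A,\sigma,g,\rh^1,\rh^2)$. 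Quantitative elliptic and layer-potential estimates on domains with chord--arc constant $\ge\de$ and local geometry bounded by $A$ then control $u^1$, $u^2$ and their first derivatives in a fixed collar of $\Ga_t$, with constants depending polynomially on $\de^{-1}$ and on $M$.

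The core of the proof is a \emph{no-pinch lemma}. If at some time $t$ two points of $\Ga_t$ that are far apart in arclength lie within distance $\sim\de(t)$, then the thin neck between the two approaching sheets is filled by a single fluid, say fluid $i$, and flanked on both sides by the other fluid $j\neq i$. Incompressibility of $u^i$, together with the fact that both walls move with the fluid ($\pr_t z\cdot n=u^i\cdot n$), forces the walls to approach at a rate controlled by the variation of $u^i$ across the neck, schematically $|\de'(t)|\les\de(t)\,\|\nab u^i\|_{L^\infty(\mathrm{neck})}$ (in the a.e./Dini sense, since $\de$ is only Lipschitz), while the maximum principle bounds $|u^i|$ in the neck by $\|v^i\|_{L^\infty(\Ga_t)}$ --- by $A$ when $i=1$ and by $M(t)$ when $i=2$. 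Combined with the elliptic estimates of the previous step this produces a differential inequality $|\de'(t)|\le\Phi_1(\de(t),M(t))$. This is exactly the step where the hypothesis $\rh^1,\rh^2>0$ is indispensable: in the one-fluid case $\rh^2=0$ the pinched region carries no velocity field and no such constraint exists, which is why the splash of \cite{CaCoFeGaGo} can form.

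In parallel I would derive the evolution equation for $v^2$, equivalently for the vortex-sheet strength $\varpi:=(v^1-v^2)\cdot\tau$, by differentiating the Bernoulli relations $P^j=-\rh^j\pr_t\vphi^j-\tfrac12\rh^j|u^j|^2-g\rh^j z_2+c_j(t)$ in the two fluids and using the stress condition \eqref{cv4}. The only top-order term is the surface-tension term $\sim\sigma\,\pr_\tau K(z)$, which is harmless because $K(z)$ is controlled by $\|z\|_{C^4}\le A$; the remaining transport and Birkhoff--Rott commutator terms are estimated by the collar bounds above, giving $M'(t)\le\Phi_2(\de(t),M(t))$ with $\Phi_2$ depending only on $A$ and $\sigma,g,\rh^1,\rh^2$ (here $\sigma>0$ is used essentially, as it furnishes the leading term). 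The two differential inequalities now form a closed system for the pair $(\de^{-1},M)$ with coefficients depending only on $A$ and the physical constants and initial data $(\de(0)^{-1},M(0))\les_A(1,1)$; by ODE comparison $(\de(t)^{-1},M(t))$ stays below the solution of the majorizing system, which one checks does not blow up before time $T$, whence $\de(t)\ge c(A,T)$, which is \eqref{quant}.

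The hard part will be the no-pinch lemma and the $\varpi$-equation, and in particular verifying that the resulting system actually closes. Concretely, the elliptic and layer-potential estimates must be made uniform as the domains $\Om^1(t)$, $\Om^2(t)$ degenerate toward a self-intersection; $\|\nab u^i\|$ inside the pinching neck must be estimated sharply enough; and in the vortex-sheet equation the positivity of both densities and of the surface tension must be leveraged so that the feedback ``small chord--arc $\Rightarrow$ large velocity $\Rightarrow$ faster pinching'' is sub-critical rather than super-critical. This sub-criticality, which quantifies the greater stability of two-fluid interfaces compared with the water waves system, is the crux of the matter.
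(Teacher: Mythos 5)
Your plan has the right skeleton (localize a potential pinch, differentiate the distance of the two approaching points, close with a Gronwall/Osgood argument), but it is missing the paper's central quantitative mechanism, and without it the scheme does not close. Your differential inequality $|\de'(t)|\les \de(t)\,\|\nabla u^i\|_{L^\infty(\mathrm{neck})}$, with the gradient controlled by ``elliptic and layer-potential estimates \dots with constants depending polynomially on $\de^{-1}$ and on $M$'', yields at best $|\de'|\les \mathrm{poly}(\de^{-1})\,\de$; already $|\de'|\les 1$ allows $\de$ to vanish linearly in finite time, so no conclusion of the form \eqref{quant} follows. What is actually needed is that the \emph{difference of the normal velocities at the two conjugate points} is $O(d\log(1/d))$, $d$ being the distance between them, i.e. a log-Lipschitz bound whose constant does \emph{not} degenerate as the domains pinch. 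This is precisely the content of Lemma \ref{diffvel}, and it cannot be obtained from collar elliptic estimates: a harmonic field in a neck of width $d$ with $O(1)$ tangential data on the two walls has gradients of size $1/d$ in general. The paper obtains it from two ingredients you do not have: (i) an $L^\infty$ bound on the vortex-sheet strength $\omega=(v^1-v^2)\cdot z_\alpha$, proved in Lemma \ref{vort} from the Burgers-type equation \eqref{ko7} (this is the only place $\rho^1,\rho^2>0$ is used, and it requires no derivative control of $v^2$); and (ii) a delicate cancellation between the Birkhoff--Rott contributions of the two nearly touching sheets, proved in Section \ref{MainProp} by testing against the $Z$-norm of Definition \ref{MainDef} and using the $H$--$M$ commutator estimate of Lemma \ref{lemma3} (Proposition \ref{TechProp}). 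You explicitly defer this ``sub-criticality'' as ``the crux of the matter''; it is indeed the theorem, so the proposal is a program rather than a proof.

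Two further points would need repair even as a program. First, your initialization $M(0)=\|v^2(0)\|_{C^2}\les_A 1$ does not follow from \eqref{u20bound}: that hypothesis is only an $L^\infty$ bound on $u^2$ along $\Gamma_0$, and harmonicity plus a bounded Dirichlet trace gives interior, not up-to-the-boundary, derivative bounds; the paper deliberately propagates only $\|\omega\|_{L^\infty}$ (via \eqref{ko7}--\eqref{ko8} and \eqref{ko3} with \eqref{u20bound}), never a $C^1$ or $C^2$ norm of $v^2$, whose evolution would itself involve singular integrals degenerating near the pinch. Second, $\sigma>0$ is not ``used essentially'': in the paper the surface-tension term enters only through $\widetilde P^2-\widetilde P^1=-\sigma K(z)$ inside $F_2$ in \eqref{ko7}, where it is harmless because $\|z\|_{C^4}\le A$, so the theorem covers $\sigma=0$ as well; building your closure on the leading surface-tension term would both add an unnecessary hypothesis and miss where the stability of the two-fluid problem actually comes from (the bounded vorticity, i.e. $\rho^2>0$). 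Finally, note that the paper also pins down through which fluid a splash could be approached (Lemma \ref{chord-arc2} and Proposition \ref{chord-arc3}, using the Lagrangian flow of fluid $1$ and \eqref{charc0}), rather than leaving the two cases symmetric as you do; this is what makes the subsequent two-graph normal form \eqref{ko75}--\eqref{ko72} available for the singular-integral analysis.
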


In other words, "splash" singularities cannot develop smoothly in the case of (locally) regular solutions of the two-fluid interface system. Indeed, if a splash were to develop at time $t_{\mathrm{splash}}$ then one can apply the theorem on the time interval $[0,t_{\mathrm{splash}}-\eps]$, for any $\eps>0$, to conclude that the chord-arc function $F(z)$ is bounded from below, uniformly in $\eps$. By continuity, $F(z)$ is bounded from below at time $t=t_{\mathrm{splash}}$, contradiction.

This is in sharp contrast with the situation in the water waves system studied in \cite{CaCoFeGaGo}, which corresponds to $\rho^2=0$. On the other hand, a similar result ruling out the possibility of splash singularities was proved recently by Gancedo--Strain, see \cite{GaSt} and the references therein, in the case of certain solutions of the SQG equation and the Muskat problem. 

\begin{remark}\label{remarks} (i) The periodicity assumptions are only used to derive the integral formulas \eqref{ko1}, which are later used to prove bounds such as \eqref{ko102} and localize to a small neighborhood of the potential splash. Suitable asymptotic flatness assumptions would also be sufficient for these purposes. The rest of the argument relies only on the bounds \eqref{zsmooth}--\eqref{u1smooth}.

(ii) The condition $\rho^1,\rho^2>0$ is only used in Lemma \ref{vort}, to prove the critical $L^\infty$ bound on the boundary vorticity $\omega$. We note that the mechanism of splash formation in \cite{CaCoFeGaGo} in the water-wave problem (which corresponds to $\rho^2=0$) requires blowup of the boundary vorticity.

(iii) Our theorem does not prevent the dynamical formation of self-intersection singularities in all cases, and it might be possible that the interface will self-intersect in finite time in certain cases. Our theorem only states that this can only happen if the interface losses smoothness as one approaches the time of self-intersection, i.e. the two collapsing waves will become sharper as they approach each other. Intuitively, this sharpening of the waves would allow the fluid in between to escape easier, and the collapsing waves would still be able to travel towards each other and intersect in finite time. 

It seems quite difficult, however, to determine rigorously whether such a scenario can occur. One would have to understand, at the very least, how to dynamically create interfaces with very large high Sobolev norms.

(iv) The proof shows that the dependence of the constant $c(A,T)$ in \eqref{quant} on $T$ is at most double exponential, i.e. $c(A,T)\geq e^{-Ce^{CT}}$ for some constant $C$ that depends only on $A,\sigma,g,\rho^1,\rho^2$.  
\end{remark}

\subsubsection{Notation} Given two nonnegative quantities $X,Y$, the relation
\begin{equation*}
X\lesssim Y
\end{equation*}
means that there is a number $C$ that may depend only on the constants $A,\sigma,g,\rho^1,\rho^2$ in Theorem \ref{MainThm} such that $X\leq CY$. Similarly, the notation \begin{equation*}
X\approx Y
\end{equation*}
means $X\lesssim Y$ and $Y\lesssim X$.

The rest of the paper is concerned with the proof of Theorem \ref{MainThm} and is organized as follows. In section \ref{MainThm1} we start our analysis: we first rewrite our equations in terms of the boundary vorticity $\omega$, using the fact that we are in the irrotational case, see Proposition \ref{interface}. Then we prove that this boundary vorticity is bounded, see Lemma \ref{vort}. We then show that a potential splash could only develop in a certain controlled way: two different points on the interface can only approach dynamically if the segment between them is contained in the domain $\Omega^2$, and the two arcs on the interface become almost parallel. See Lemma \ref{chord-arc2} and Proposition \ref{chord-arc3}.

Our goal is to show that the difference of the velocities of the points that could potentially cause a splash becomes very small, as these points approach. For this we use the formula \eqref{ko1}, which expresses these velocities as singular integral operators in terms of $\omega$, and the apriori quantitative $L^\infty$ bound on $\omega$. The argument, which is the most technical part of the paper is carried out in section \ref{MainProp}, using a suitably defined auxiliary $Z$ norm. See Proposition \ref{TechProp}.

In section \ref{MainThm2} we complete the proof of the main theorem, essentially using Gronwall's inequality.

\section{Proof of the main theorem, I: preliminary reductions}\label{MainThm1}

\subsection{The equations on the interface}\label{reduct1}

Given that the velocity fields $u^1(t)$ and $u^2(t)$ are assumed to be both incompressible and irrotational in their respective domains, it is well-known that the system \eqref{syst}--\eqref{syst2} can be reduced to a system on the interface. For the sake of completeness we derive this reduction in this subsection. More precisely, we prove the following:

\begin{proposition}\label{interface}
Let $v^j(\alpha,t):=u^j(z(\alpha,t),t)$ and $\widetilde{P}^j(\alpha,t):=P^j(z(\alpha,t),t)$, $j\in\{1,2\}$. Then, with the assumptions in Theorem \ref{MainThm}, there is a $2\pi$-periodic $C^2$ function $\omega$ (the vorticity) such that
\begin{equation}\label{ko1}
\begin{split}
&v^1=BR(z,\omega)+\frac{\omega}{2}\frac{z_\alpha}{|z_\alpha|^2},\qquad v^2=BR(z,\omega)-\frac{\omega}{2}\frac{z_\alpha}{|z_\alpha|^2},\\
&BR(z,\omega)(\alpha,t):=\frac{1}{2\pi}p.v.\int_{\mathbb{R}}\frac{(z(\alpha,t)-z(\beta,t))^\perp}{|z(\alpha,t)-z(\beta,t)|^2}\omega(\beta,t)\,d\beta.
\end{split}
\end{equation}
Moreover,
\begin{equation}\label{ko2}
\begin{split}
&[z_t-BR(z,\omega)]\cdot (z_\alpha)^\perp=0,\\
&\partial_t(v^j\cdot z_\alpha)-\partial_\alpha(v^j\cdot z_t)+\frac{1}{2}\partial_\alpha(|v^j|^2)+\frac{1}{\rho^j}\partial_\alpha\widetilde{P}^j+g\partial_\alpha z_2=0,\qquad j\in\{1,2\}.
\end{split}
\end{equation}
\end{proposition}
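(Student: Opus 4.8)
The plan is to exploit the irrotationality and incompressibility of each $u^j$ to represent the velocity fields as Biot–Savart-type integrals against a single density supported on the interface, and then to reinterpret the interface equations \eqref{cv3} and \eqref{cv4} as evolution equations for that density.

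First I would introduce the vorticity as a distribution. Since $u^1$ is irrotational in $\Omega^1$ and $u^2$ is irrotational in $\Omega^2$, the full planar velocity field $u$ (defined to be $u^j$ on $\Omega^j$) has curl supported on $\Gamma_t$; writing $\mathrm{curl}\, u = \omega \,\delta_{\Gamma_t}$ in the sense of distributions defines a $2\pi$-periodic function $\omega(\beta,t)$ on the interface, with the convention that the jump $(v^1 - v^2)\cdot z_\alpha/|z_\alpha|$ equals $\omega$. The regularity $\omega\in C^2$ follows from the assumed $C^4$ regularity of $z$ and $C^3$ regularity of $v^1$ (via \eqref{zsmooth}, \eqref{u1smooth}) together with the relation between $\omega$ and $v^1$ that will emerge. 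Next, since $u^j$ is also divergence-free, $u$ is determined (up to the constant-at-infinity normalization, which is pinned down in the periodic setting by the $L^2$ condition in \eqref{syst2}) by its curl via the periodic Biot–Savart kernel, which is the periodization of $\frac{1}{2\pi}\frac{x^\perp}{|x|^2}$. Evaluating this representation at a point $z(\alpha,t)$ on the curve and using the standard Plemelj jump relations for the singular integral — the tangential boundary values from the two sides differ by $\pm\frac{\omega}{2}\frac{z_\alpha}{|z_\alpha|^2}$, while the principal-value integral gives the average $BR(z,\omega)$ — yields exactly the two formulas in \eqref{ko1}. The first identity in \eqref{ko2} is then immediate: subtracting $v^1$ and $v^2$ from \eqref{ko1} shows $BR(z,\omega) = (v^1+v^2)/2$, and averaging the two instances of \eqref{cv3} (the normal component of $z_t$ agrees with that of both $v^1$ and $v^2$) gives $[z_t - BR(z,\omega)]\cdot(z_\alpha)^\perp = 0$.

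For the second identity in \eqref{ko2}, I would work from the Euler equation \eqref{cv2} in Bernoulli form. Because $u^j$ is irrotational, $[u^j\cdot\nabla]u^j = \nabla(|u^j|^2/2) - u^j\times\mathrm{curl}\,u^j = \nabla(|u^j|^2/2)$ inside $\Omega^j$, so \eqref{cv2} becomes $\partial_t u^j + \nabla\big(|u^j|^2/2 + P^j/\rho^j + g z_2\big) = 0$ in $\Omega^j$ (here $z_2$ denotes the second coordinate function on $\mathbb{R}^2$). I would then pull this identity back to the interface: differentiate $v^j(\alpha,t) = u^j(z(\alpha,t),t)$ in $\alpha$ to get $\partial_\alpha v^j = (z_\alpha\cdot\nabla)u^j$, and in $t$ to get $\partial_t v^j = (\partial_t u^j)(z,t) + (z_t\cdot\nabla)u^j$. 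Dotting the first with $z_t$ and the second with $z_\alpha$ and subtracting kills the mixed second-derivative terms (using that the matrix $\nabla u^j$ is applied symmetrically — more precisely $(z_\alpha\cdot\nabla u^j)\cdot z_t = (z_t\cdot\nabla u^j)\cdot z_\alpha$ because $\mathrm{curl}\,u^j=0$ makes $\nabla u^j$ symmetric), leaving $\partial_\alpha(v^j\cdot z_t) - \partial_t(v^j\cdot z_\alpha) = z_t\cdot\partial_t v^j$-type terms that I can organize into $\partial_t(v^j\cdot z_\alpha) - \partial_\alpha(v^j\cdot z_t) = -(\partial_t u^j)(z,t)\cdot z_\alpha = \partial_\alpha\big(|v^j|^2/2 + \widetilde P^j/\rho^j + g z_2\big)$, which upon moving everything to one side is precisely the claimed equation. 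The continuity of the stress tensor \eqref{cv4} enters only insofar as it relates $\widetilde P^1$ and $\widetilde P^2$; it is not needed for the individual identities but records the coupling.

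The main obstacle is making the distributional/jump-relation argument fully rigorous: one must justify that the periodic Biot–Savart representation recovers $u$ exactly (not merely up to a harmonic field), which requires the decay/normalization built into \eqref{syst2} and the simple-connectivity of the domains, and one must carefully verify the Plemelj jump formulas for a $C^4$ (merely, not analytic) chord-arc curve, including the existence of the principal value defining $BR(z,\omega)$ and the precise sign and tangential direction of the jump. The Bernoulli computation, by contrast, is a routine chain-rule manipulation once irrotationality is used to symmetrize $\nabla u^j$; the only care needed there is that all traces on $\Gamma_t$ are taken from within the correct domain $\Omega^j$, which is legitimate because $u^j\in C^3(\overline{\Omega^j})$.
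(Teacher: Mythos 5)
Your proposal follows essentially the same route as the paper's proof: $\omega$ is the tangential jump $(v^1-v^2)\cdot z_\alpha$, the formulas \eqref{ko1} come from the Plemelj jump relations for the Birkhoff--Rott integral plus a uniqueness step (the paper packages your ``Biot--Savart recovers $u$ exactly, not merely up to a harmonic field'' as a Liouville argument for the bounded, periodic, analytic function built from $U^j-u^j$, using \eqref{syst2}), and the second line of \eqref{ko2} is the same chain-rule/irrotationality computation, which the paper carries out componentwise rather than through the Bernoulli form. Two small slips to correct when writing it up: the normalization should be $\omega=(v^1-v^2)\cdot z_\alpha$ (not divided by $|z_\alpha|$) so that it matches the $d\beta$-density in $BR(z,\omega)$, and the correct intermediate identity is $\partial_t(v^j\cdot z_\alpha)-\partial_\alpha(v^j\cdot z_t)=+(\partial_t u^j)(z(\alpha,t),t)\cdot z_\alpha=-\partial_\alpha\big(\tfrac12|v^j|^2+\widetilde{P}^j/\rho^j+g z_2\big)$, which then yields \eqref{ko2} with the stated signs (your version, taken literally, flips the last three terms).
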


\begin{proof}[Proof of Proposition \ref{interface}] In view of the second equation in \eqref{syst}, we have $(v^1-v^2)\cdot z_\alpha^\perp=0$. Let
\begin{equation}\label{ko3}
\omega:=(v^1-v^2)\cdot z_\alpha,
\end{equation}
and notice that $\omega\in C^2(\mathbb{R}\times I:\mathbb{R})$ is a $2\pi$ periodic function.

We prove first the identities in the first line of \eqref{ko1}. For $j\in\{1,2\}$ and $x\in \Omega^j(t)$, we define
\begin{equation}\label{ko3.5}
U^j(x,t):=\frac{1}{2\pi}p.v.\int_{\mathbb{R}}\frac{(x-z(\beta,t))^\perp}{|x-z(\beta,t)|^2}\omega(\beta,t)\,d\beta.
\end{equation}
For simplicity of notation, we assume that $t\in[0,T]$ is fixed in the rest of this argument, and drop the $t$ dependence of the functions. By definition, $U^j\in C^4(\Omega^j:\mathbb{R}^2)$ for any $j\in\{1,2\}$. In addition,
\begin{equation*}
\mathrm{div}\,U^j=0\quad\text{ and }\quad\mathrm{curl}\,U^j=0\quad \text{ in }\quad \Omega^j.
\end{equation*}
Using the assumptions on the domains $\Omega^j$, it is easy to see that there is $\eps_0>0$ such that
\begin{equation*}
\begin{split}
&z(\alpha)-\eps [z_\alpha(\alpha)]^\perp\in\Omega^1\text{ for any }\alpha\in\mathbb{R}\text{ and }\eps\in(0,\eps_0),\\
&z(\alpha)+\eps [z_\alpha(\alpha)]^\perp\in\Omega^2\text{ for any }\alpha\in\mathbb{R}\text{ and }\eps\in(0,\eps_0).
\end{split}
\end{equation*}
Therefore, using standard manipulations of singular integrals,
\begin{equation}\label{ko4}
\begin{split}
&\lim_{\eps\to 0}U^1(z(\alpha)-\eps [z_\alpha(\alpha)]^\perp)=BR(z,\omega)(\alpha)+\frac{\omega(\alpha)}{2}\frac{z_\alpha(\alpha)}{|z_\alpha(\alpha)|^2},\\
&\lim_{\eps\to 0}U^2(z(\alpha)+\eps [z_\alpha(\alpha)]^\perp)=BR(z,\omega)(\alpha)-\frac{\omega(\alpha)}{2}\frac{z_\alpha(\alpha)}{|z_\alpha(\alpha)|^2},
\end{split}
\end{equation}
uniformly for any $\alpha\in\mathbb{R}$, where the Birkhoff--Rott operators $BR$ are defined in \eqref{ko1}. 

In addition, the vector-fields $U^j-u^j$ are smooth in $\Omega^j$ and satisfy the equations $\mathrm{div}\,(U^j-u^j)=0,\,\mathrm{curl}\,(U^j-u^j)=0$ in $\Omega^j$. Let
\begin{equation*}
f(x):=\begin{cases}
(U^1_2-u^1_2)+i(U^1_1-u^1_1)&\qquad\text{ if }x\in\Omega^1;\\
(U^2_2-u^2_2)+i(U^2_1-u^2_1)&\qquad\text{ if }x\in\Omega^2,
\end{cases}
\end{equation*}
and notice that $f$ is a complex analytic function in $\Omega^1\cup\Omega^2$. Moreover, in view of \eqref{ko4},
\begin{equation*}
\begin{split}
\lim_{\eps\to 0}f(z(\alpha)-\eps [z_\alpha(\alpha)]^\perp)&=\lim_{\eps\to 0}f(z(\alpha)+\eps [z_\alpha(\alpha)]^\perp)\\
&=(BR(z,\omega)_2+i BR(z,\omega)_1)+\frac{\omega\cdot\partial_\alpha(z_2+iz_1)}{2|\partial_\alpha z|^2}-(v^1_2+iv^1_1).
\end{split}
\end{equation*}
uniformly for any $\alpha\in\mathbb{R}$. Therefore, $f$ extends to a complex analytic function in $\mathbb{R}^2$. Moreover, $f\circ\mathrm{Tr}_2=f$, $f$ is bounded in $\mathbb{R}^2$, and $\lim_{|x_2|\to \infty}|f(x)|=0$ (using the assumption \eqref{syst2} and the definition \eqref{ko3.5}). Therefore $f\equiv 0$ in $\mathbb{R}^2$, and the desired identities in the first line of \eqref{ko1} follow.

We prove now the identities \eqref{ko2}. The identity in the first line is a simple consequence of \eqref{ko1} and the assumption $(\partial_tz-u^1|_{\Gamma_t})\cdot(\partial_\alpha z)^\perp=0$ in \eqref{syst}. To prove the identities in the second line, we may assume that $j=1$ and calculate, using the definitions,
\begin{equation*}
\begin{split}
\partial_t(v^1\cdot z_\alpha)(\alpha,t)&=\sum_{k=1}^2v^1_k(\alpha,t)(\partial_\alpha\partial_tz_k)(\alpha,t)+\sum_{k=1}^2(\partial_\alpha z_k)(\alpha,t)(\partial_tu_k^1)(z(\alpha,t),t)\\
&+\sum_{k,l=1}^2(\partial_\alpha z_k)(\alpha,t)(\partial_lu_k^1)(z(\alpha,t),t)(\partial_tz_l)(\alpha,t),
\end{split}
\end{equation*}
\begin{equation*}
\partial_\alpha(v^1\cdot z_t)(\alpha,t)=\sum_{k=1}^2v^1_k(\alpha,t)\partial_\alpha\partial_tz_k(\alpha,t)+\sum_{k,l=1}^2(\partial_tz_k)(\alpha,t)(\partial_lu_k^1)(z(\alpha,t),t)\partial_\alpha z_l(\alpha,t),
\end{equation*}
and
\begin{equation*}
\frac{1}{2}\partial_\alpha(|v^1|^2)(\alpha,t)=\sum_{k,l=1}^2u^1_k(z(\alpha,t),t)(\partial_lu^1_k)(z(\alpha,t),t)(\partial_\alpha z_l)(\alpha,t).
\end{equation*}
Therefore, using the first identity in \eqref{syst} and the identity $\partial_lu_k^1=\partial_ku^1_l$,
\begin{equation*}
\begin{split}
&\rho^1\Big[\partial_t(v^1\cdot z_\alpha)(\alpha,t)-\partial_\alpha(v^1\cdot z_t)(\alpha,t)+\frac{1}{2}\partial_\alpha(|v^1|^2)(\alpha,t)\Big]\\
&=\rho^1\Big[\sum_{k=1}^2(\partial_\alpha z_k)(\alpha,t)(\partial_tu_k^1)(z(\alpha,t),t)+\sum_{k,l=1}^2u^1_k(z(\alpha,t),t)(\partial_lu^1_k)(z(\alpha,t),t)(\partial_\alpha z_l)(\alpha,t)\Big]\\
&=-\sum_{k=1}^2(\partial_\alpha z_k)(\alpha,t)(\partial_kP^1)(z(\alpha,t),t)-g\rho^1(\partial_\alpha z_2)(\alpha,t).
\end{split}
\end{equation*}
The desired identity in the second line of \eqref{ko2} follows.
\end{proof}

\subsection{Boundedness of the boundary vorticity}\label{vortbound}

In this subsection we prove a key $L^\infty$ bound on the boundary vorticity $\omega$. This quantitative bound uses the assumption $\rho^2>0$, and is the main difference between the model studied in this paper and the water wave system studied in \cite{CaCoFeGaGo}.

\begin{lemma}\label{vort}
With the notation in Proposition \ref{interface}, we have
\begin{equation*}
\|\omega\|_{L^\infty(\mathbb{R}\times[0,T])}\lesssim 1.
\end{equation*}
\end{lemma}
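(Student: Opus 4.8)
The plan is to exploit the evolution equation for the boundary vorticity $\omega$ together with the positivity of both densities. From the second line of \eqref{ko2}, applied with $j=1$ and $j=2$ and subtracted, the pressure terms no longer cancel: one gets
\[
\partial_t\big((v^1-v^2)\cdot z_\alpha\big)-\partial_\alpha\big((v^1-v^2)\cdot z_t\big)+\frac12\partial_\alpha\big(|v^1|^2-|v^2|^2\big)
=-\Big(\frac{1}{\rho^1}-\frac{1}{\rho^2}\Big)\partial_\alpha\widetilde P^1-\frac{1}{\rho^2}\partial_\alpha(\widetilde P^1-\widetilde P^2)-g(\rho^{0})\partial_\alpha z_2,
\]
where one has to be careful with the exact algebra; the key point is that $\widetilde P^1-\widetilde P^2$ on $\Gamma_t$ is given explicitly by the surface tension term $\sigma K(z)$ from the third equation in \eqref{syst}, and $\partial_\alpha\widetilde P^1$ can be eliminated by instead combining the two equations with weights $1/\rho^j$ so as to isolate $\partial_t\omega$ plus transport and lower-order terms. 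Concretely, since $\omega=(v^1-v^2)\cdot z_\alpha$, I would derive an equation of the schematic form
\[
\partial_t\omega = \partial_\alpha\big(\text{something}\big)+\text{(terms bounded by }\|v^1\|_{C^1},\|z\|_{C^2},\sigma\|z\|_{C^3},g\|z\|_{C^1}\big),
\]
in which the genuinely dangerous quantity $v^2$ appears only through the combination $v^2=v^1-\omega z_\alpha/|z_\alpha|^2$ (first line of \eqref{ko1}), so that $|v^2|^2$ contributes a term like $\partial_\alpha(\text{linear in }\omega)$ plus $\partial_\alpha(\omega^2 \cdot \text{smooth})$. The resulting transport-type equation for $\omega$ should have the form $\partial_t\omega + b\,\partial_\alpha\omega = (\text{smooth, bounded})\cdot\omega + (\text{smooth, bounded})$, where the transport speed $b$ and all coefficients are controlled by the $C^1$-norm of $v^1$ and the $C^3$-norm of $z$ — all bounded by $A$ via \eqref{zsmooth} and \eqref{u1smooth}. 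Crucially, there is no $\omega^2$ coefficient multiplying $\partial_\alpha\omega$ after the cancellation, because the bad term $\partial_\alpha(|v^2|^2)/2$ gets divided by $\rho^2>0$ rather than by a vanishing density; this is exactly where $\rho^2>0$ enters.

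Granting such a first-order scalar equation, the $L^\infty$ bound follows by the method of characteristics (or equivalently, a standard maximum-principle / Gronwall argument): along characteristics $\dot X(t)=b(X(t),t)$ one has $\frac{d}{dt}\omega(X(t),t)=c_1(X(t),t)\omega(X(t),t)+c_0(X(t),t)$ with $\|c_0\|_{L^\infty},\|c_1\|_{L^\infty}\lesssim 1$, hence $|\omega(X(t),t)|\le e^{Ct}(|\omega(X(0),0)|+Ct)$, and the initial data $\omega(\cdot,0)$ is bounded because $v^1(\cdot,0)$ is bounded by \eqref{u1smooth} and $v^2(z(\cdot,0),0)=u^2(z(\cdot,0),0)$ is bounded by \eqref{u20bound}, while $\omega(\cdot,0)=(v^1(\cdot,0)-v^2(\cdot,0))\cdot z_\alpha(\cdot,0)$. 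This yields $\|\omega\|_{L^\infty(\mathbb R\times[0,T])}\lesssim 1$ as claimed (with the double-exponential-in-$T$ dependence noted in Remark \ref{remarks}(iv) coming later from other estimates, not from this step — here the dependence on $T$ is only a single exponential).

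The main obstacle is the algebraic derivation of a \emph{closed} scalar evolution equation for $\omega$ with coefficients that are manifestly bounded by the a priori data. Two points require care: first, one must check that when subtracting/combining the two equations in the second line of \eqref{ko2}, every occurrence of the unknown $v^2$ (which is a priori only $L^2$, not $L^\infty$) can be rewritten, via \eqref{ko1}, in terms of the bounded quantity $v^1$ and $\omega$ itself, and that $\widetilde P^j$ enters only through the surface-tension jump $\widetilde P^1-\widetilde P^2=\sigma K(z)$ and not through $\widetilde P^j$ individually — this is where irrotationality and the structure of \eqref{ko2} are used. Second, one must verify that the terms involving $z_t$ are harmless: $z_t\cdot(z_\alpha)^\perp=BR(z,\omega)\cdot(z_\alpha)^\perp$ is linear in $\omega$ with a kernel that is bounded thanks to the chord-arc bound \eqref{charc0} propagated by the qualitative assumption \eqref{qual} (at this stage one may use \eqref{qual} merely as a finite, though not yet quantitative, lower bound on $F(z)$ to make sense of $BR(z,\omega)$ as a bounded operator on each fixed time slice, which is all that is needed to run Gronwall in $t$); alternatively one can avoid $z_t$ entirely by noting the tangential part of $z_t$ is a gauge freedom and working invariantly. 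Once the equation is in the stated transport form, the rest is routine.
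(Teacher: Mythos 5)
Your reduction to an equation for $\omega$ alone is the right move, and your identification of where $\rho^2>0$ enters (one can solve for $\partial_t\omega$ with the pressure appearing only through the surface-tension jump $\widetilde P^1-\widetilde P^2=\sigma K(z)$) matches the paper. But the structural claim on which your Gronwall argument rests is incorrect: the quadratic term does \emph{not} cancel. Taking the combination $\rho^1\times(\text{eq. }j=1)-\rho^2\times(\text{eq. }j=2)$ from the second line of \eqref{ko2} and substituting $v^2=v^1-\omega z_\alpha/|z_\alpha|^2$ from \eqref{ko1}, the term $-\tfrac{\rho^2}{2}\partial_\alpha|v^2|^2$ contributes $+\tfrac{\rho^2}{2}\partial_\alpha\big[\omega^2/|z_\alpha|^2\big]$, and after dividing by $\rho^2$ one arrives at the modified Burgers equation \eqref{ko7},
\begin{equation*}
\partial_t\omega=\frac{1}{2}\partial_\alpha\Big[\frac{\omega^2}{|z_\alpha|^2}\Big]+\partial_\alpha(F_1\omega)+F_2,
\end{equation*}
with $F_1,F_2$ bounded. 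This is quasilinear, not linear: the transport speed contains $\omega$ itself, and, more seriously, expanding the derivative produces the zeroth-order term $\tfrac{\omega^2}{2}\partial_\alpha(|z_\alpha|^{-2})$, whose coefficient is bounded but not zero. Along characteristics you therefore get a Riccati-type inequality $\tfrac{d}{dt}|\omega|\le C(\omega^2+|\omega|+1)$, and Gronwall does not close on a fixed interval $[0,T]$: that argument only yields a bound for a short time depending on the size of the data, which is exactly what the lemma must avoid. The positivity of $\rho^2$ buys you the ability to isolate $\partial_t\omega$; it does not remove the Burgers nonlinearity, which is intrinsic to vortex-sheet dynamics.

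The missing idea is to exploit the exact divergence form of the quadratic term. The paper does this with the weighted integrals $I_p(t)=\int_{\mathbb{T}}\omega^p\,(z_\alpha\cdot z_\alpha)^{-(p-1)/2}\,d\alpha$ for even $p$: with precisely this weight the two $\omega^{p+1}$ contributions (from the transport part and from the quadratic zeroth-order part) cancel identically after integration by parts, giving $|\tfrac{d}{dt}I_p|\le Cp\,(I_p+1)$ with $C=C(A)$, hence $I_p(t)\le e^{Cp(T+1)}$ by Gronwall, and the $L^\infty$ bound follows by letting $p\to\infty$ (the initial bound $I_p(0)\le C^p$ uses \eqref{u1smooth} and \eqref{u20bound}, as you noted). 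Equivalently, one can check that $u:=\omega/|z_\alpha|$ satisfies a transport equation whose zeroth-order terms are \emph{linear} in $u$, so a maximum-principle/characteristics argument works for $u$; but some such renormalization is indispensable, and your proposal as written contains none. Two further remarks: the $z_t$-dependent coefficients are bounded directly by \eqref{zsmooth} (the $C^4$ norm there is taken on $\mathbb{R}\times[0,T]$, so it controls time derivatives of $z$), so no bound on $BR(z,\omega)$ and no chord-arc input are needed here; and your fallback of invoking the qualitative assumption \eqref{qual} to control such terms would in any case be inadmissible, since the constant in the lemma must depend only on $A,T,\sigma,g,\rho^1,\rho^2$ for the later quantitative Gronwall argument to go through.
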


\begin{proof}[Proof of Lemma \ref{vort}] We use the identities in Proposition \ref{interface}. We have
\begin{equation*}
v^2=v^1-\omega\frac{z_\alpha}{|z_\alpha|^2}.
\end{equation*}
Therefore
\begin{equation*}
\begin{split}
v^2\cdot z_\alpha=v^1\cdot z_\alpha-\omega,\qquad v^2\cdot z_t=v^1\cdot z_t-\omega\frac{z_t\cdot z_\alpha}{|z_\alpha|^2},\qquad |v^2|^2=|v^1|^2+\frac{\omega^2}{|z_\alpha|^2}-2\omega\frac{v^1\cdot z_\alpha}{|z_\alpha|^2}.
\end{split}
\end{equation*}
The equations in the second line of \eqref{ko2} show that
\begin{equation*}
\begin{split}
\partial_t[(\rho^1v^1-\rho^2v^2)\cdot z_\alpha)]&-\partial_\alpha[(\rho^1v^1-\rho^2v^2)\cdot z_t]+\frac{1}{2}\partial_\alpha[\rho^1|v^1|^2-\rho^2|v^2|^2]\\
&+\partial_\alpha(\widetilde{P}^1-\widetilde{P}^2)+g(\rho^1-\rho^2)\partial_\alpha z_2=0.
\end{split}
\end{equation*}
We use the formulas above to conclude that $\omega$ satisfies the modified Burgers' equation
\begin{equation}\label{ko7}
\partial_t\omega=\frac{1}{2}\partial_\alpha\Big[\frac{\omega^2}{|z_\alpha|^2}\Big]+\partial_\alpha(F_1\cdot \omega)+F_2,
\end{equation}
where
\begin{equation*}
\begin{split}
&F_1:=\frac{(z_t-v^1)\cdot z_\alpha}{|z_\alpha|^2},\\
&F_2:=\frac{\partial_\alpha(\widetilde{P}^2-\widetilde{P}^1)}{\rho^2}+\frac{g(\rho^2-\rho^1)\partial_\alpha z_2}{\rho^2}+\frac{\rho^2-\rho^1}{\rho^2}\partial_t(v^1\cdot z_\alpha)+\frac{\rho^2-\rho^1}{2\rho^2}\partial_\alpha(v^1\cdot v^1-2v^1\cdot z_t).
\end{split}
\end{equation*}
Recalling also the assumption, see \eqref{syst},
\begin{equation*}
(\widetilde{P}^2-\widetilde{P}^1)=-\sigma\frac{\partial_\alpha z_1\partial_\alpha^2z_2-\partial_\alpha z_2\partial_\alpha^2z_1}{[(\partial_\alpha z_1)^2+(\partial_\alpha z_2)^2]^{3/2}},
\end{equation*}
and the regularity assumption \eqref{zsmooth}--\eqref{u1smooth}, it follows that $F_1,F_2$ are $2\pi$-periodic functions that satisfy
\begin{equation}\label{ko8}
\|F_1\|_{C^2(\mathbb{R}\times[0,T])}+\|F_2\|_{C^1(\mathbb{R}\times[0,T])}\lesssim 1.
\end{equation}

For any $t\in[0,T]$ and any even integer $p\geq 2$ let
\begin{equation*}
I_p(t):=\int_{\mathbb{T}}\frac{\omega^p}{(z_\alpha\cdot z_\alpha)^{(p-1)/2}}\,d\alpha.
\end{equation*}
Using \eqref{ko7}, \eqref{ko8}, and \eqref{zsmooth}, we estimate
\begin{equation*}
\begin{split}
\Big|\frac{d}{dt}I_p(t)\Big|&\leq\Big|\int_{\mathbb{T}}\frac{p\omega^{p-1}}{(z_\alpha\cdot z_\alpha)^{(p-1)/2}}\Big[\frac{1}{2}\partial_\alpha\Big[\frac{\omega^2}{z_\alpha\cdot z_\alpha}\Big]+\partial_\alpha(F_1\cdot \omega)+F_2\Big]\,d\alpha\Big|\\
&+\Big|\int_{\mathbb{T}}\frac{(p-1)\omega^{p}\partial_t[(z_\alpha\cdot z_\alpha)^{1/2}]}{(z_\alpha\cdot z_\alpha)^{p/2}}\,d\alpha\Big|\\
&\leq Cp[I_p(t)+1],
\end{split}
\end{equation*}
where $C\geq 1$ is a constant that may depend only on $A$. Moreover, $|I_p(0)|\leq C^p$ (see the formula \eqref{ko3} and the assumption \eqref{u20bound}). Therefore, using Gronwall's inequality, $I_p(t)\leq e^{Cp(T+1)}$ for any $t\in[0,T]$ and any even integer $p\geq 2$, and the desired conclusion follows by letting $p\to\infty$. 
\end{proof}

\subsection{Restricted chord-arc bounds}\label{Fluid1}

In our next lemma we prove some simple properties of the chord-arc function $F(z)$ defined in \eqref{defchart}. In particular we study the case when $F(z)$ becomes small at some time $t$.

\begin{lemma}\label{chord-arc}
(i) There is $\varep_1=\varep_1(A)>0$ sufficiently small such that
\begin{equation}\label{ko15}
F(z)(\alpha,\beta,t)\geq \varep_1\qquad\text{ if }\qquad |\alpha-\beta|\leq \varep_1,\,(\alpha,\beta,t)\in\mathbb{R}\times\mathbb{R}\times[0,T].
\end{equation}

(ii) There is a constant $\varep_2=\varep_2(A)$ sufficiently small with the following property: if 
\begin{equation}\label{ko16}
\inf_{\alpha,\beta\in\mathbb{R}}F(z)(\alpha,\beta,t)=\varep\leq \varep_2,
\end{equation}
for some $t\in[0,T]$ then there are points $\alpha_1,\alpha_2\in\mathbb{R}$ such that
\begin{equation}\label{ko17}
\begin{split}
&|\alpha_1-\alpha_2|\in [2\varep_1,\varep_1^{-1}/2];\\
&|z(\alpha_1,t)-z(\alpha_2,t)|=\inf_{\alpha,\beta\in\mathbb{R},\,|\alpha-\beta|\in[\varep_1,\varep_1^{-1}]}|z(\alpha,t)-z(\beta,t)|\approx\varep.
\end{split}
\end{equation}
Moreover
\begin{equation}\label{ko17.1}
(z(\alpha_1,t);z(\alpha_2,t))\cap\Gamma_t=\emptyset,
\end{equation}
\begin{equation}\label{ko17.2}
[z(\alpha_1,t)-z(\alpha_2,t)]\cdot z_\alpha(\alpha_1,t)=[z(\alpha_1,t)-z(\alpha_2,t)]\cdot z_\alpha(\alpha_2,t)=0,
\end{equation}
and
\begin{equation}\label{ko17.3}
z_\alpha(\alpha_1,t)\cdot z_\alpha(\alpha_2,t)<0,
\end{equation}
where $(p;q)$ denotes the open line segment in the plane between the points $p$ and $q$. 

A pair of points $z(\alpha_1,t),z(\alpha_2,t)$ satisfying \eqref{ko17} will be called a conjugate pair.
\end{lemma}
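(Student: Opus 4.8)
The plan is to prove Lemma \ref{chord-arc} in two separate pieces, treating the ``small $|\alpha-\beta|$'' regime and the ``bounded $|\alpha-\beta|$'' regime with different tools.

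\smallskip

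\noindent\textbf{Part (i).}
First I would observe that part (i) is purely local and follows from the lower bound $|z_\alpha|\ge A^{-1}$ together with the $C^2$ (in fact $C^1$ suffices) control on $z_\alpha$ from \eqref{zsmooth}. Writing $z(\alpha,t)-z(\beta,t)=\int_0^1 z_\alpha(\beta+s(\alpha-\beta),t)\,(\alpha-\beta)\,ds$, we get
$$
\frac{|z(\alpha,t)-z(\beta,t)|}{|\alpha-\beta|}\ge |z_\alpha(\beta,t)|-\sup_{|\theta-\beta|\le|\alpha-\beta|}|z_\alpha(\theta,t)-z_\alpha(\beta,t)|\ge A^{-1}-\|z_{\alpha\alpha}\|_{L^\infty}|\alpha-\beta|.
$$
Choosing $\varep_1=\varep_1(A)$ small enough that $\|z_{\alpha\alpha}\|_{L^\infty}\varep_1\le \tfrac12 A^{-1}$ and $\varep_1\le \tfrac12 A^{-1}$ gives \eqref{ko15}. (The periodicity lets us treat $\alpha,\beta$ as real and reduces everything to $|\alpha-\beta|$ bounded.)

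\smallskip

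\noindent\textbf{Part (ii).}
The key point is that, by part (i) and periodicity, if the infimum of $F(z)$ is small then it is attained (asymptotically) in the range $|\alpha-\beta|\in[\varep_1,\varep_1^{-1}]$; here one uses that for $|\alpha-\beta|\ge \varep_1^{-1}$ the chord length $|z(\alpha,t)-z(\beta,t)|$ is comparable to $|\alpha-\beta|$ up to the periodic identification (the curve is a $C^4$ perturbation of the line $(\alpha,0)$ with slope bounded by $A$, so it cannot wind back on itself over long parameter distances without a long arc), hence $F(z)\gtrsim 1$ there; meanwhile for $|\alpha-\beta|\le\varep_1$ we already have $F(z)\ge\varep_1$. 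So, choosing $\varep_2=\varep_2(A)$ small, \eqref{ko16} with $\varep\le\varep_2$ forces a near-minimizing pair with $|\alpha_1-\alpha_2|\in[\varep_1,\varep_1^{-1}]$, and by compactness of the (periodic) parameter region one can choose an actual minimizer $(\alpha_1,\alpha_2)$ of the chord length over that compact set; the bound $|z(\alpha_1,t)-z(\alpha_2,t)|\approx\varep$ and $|\alpha_1-\alpha_2|\ge 2\varep_1$ then follow from part (i). For \eqref{ko17.2}: at an interior minimum of $(\alpha,\beta)\mapsto |z(\alpha,t)-z(\beta,t)|^2$ the partial derivatives vanish, which gives exactly $[z(\alpha_1)-z(\alpha_2)]\cdot z_\alpha(\alpha_1)=0$ and $[z(\alpha_1)-z(\alpha_2)]\cdot z_\alpha(\alpha_2)=0$; one must check the minimizer is genuinely interior to the admissible set, which is where the strict inequality $|\alpha_1-\alpha_2|\ge 2\varep_1>\varep_1$ (from part (i)) is used, together with the fact that at the outer endpoint $|\alpha-\beta|=\varep_1^{-1}$ the chord is $\gtrsim 1\gg\varep$. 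For \eqref{ko17.1}: if the open segment $(z(\alpha_1);z(\alpha_2))$ met $\Gamma_t$ at some $z(\gamma,t)$, then $z(\gamma,t)$ would be strictly closer to at least one of $z(\alpha_1,t),z(\alpha_2,t)$ than they are to each other; a short argument (using that $\gamma$ cannot be within $\varep_1$ of $\alpha_1$ or $\alpha_2$ in parameter since the segment has length $\approx\varep\ll\varep_1$-scale chords, by part (i)) shows $|\gamma-\alpha_i|\in[\varep_1,\varep_1^{-1}]$ for one choice of $i$, contradicting minimality of the pair. Finally \eqref{ko17.3}: the two tangent vectors $z_\alpha(\alpha_1,t)$ and $z_\alpha(\alpha_2,t)$ are both orthogonal to the same nonzero vector $w:=z(\alpha_1,t)-z(\alpha_2,t)$ in the plane, hence parallel, so $z_\alpha(\alpha_1,t)\cdot z_\alpha(\alpha_2,t)=\pm|z_\alpha(\alpha_1,t)||z_\alpha(\alpha_2,t)|$; the sign must be negative because if it were positive the two arcs would locally lie on the same side and the curve, being embedded and separating $\Omega^1,\Omega^2$ with $\Omega^1$ below and $\Omega^2$ above, would be forced to cross itself — more concretely, a second-order Taylor expansion at the minimizer shows that the same-orientation case makes one arc penetrate to the other side of $w$, contradicting \eqref{ko17.1} or embeddedness.

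\smallskip

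\noindent\textbf{Main obstacle.}
The genuinely delicate point is \eqref{ko17.3} (and, relatedly, making \eqref{ko17.1} fully rigorous): everything else is elementary calculus of minima plus the quantitative bounds \eqref{zsmooth}. For \eqref{ko17.3} one has to convert the topological fact that $\Gamma_t$ is an embedded curve separating the plane into the analytic statement that near a close self-approach the two arcs point in opposite directions. I expect to handle this by the Taylor-expansion/contradiction argument sketched above: parametrize both arcs near $\alpha_1,\alpha_2$ in the orthonormal frame $\{w/|w|,\,w^\perp/|w|\}$, use \eqref{ko17.2} to kill the first-order terms in the $w$-direction, and observe that equal orientations of $z_\alpha(\alpha_1),z_\alpha(\alpha_2)$ would force the two arcs to be graphs over $w^\perp$ with the ``lower'' one (boundary of $\Omega^1$) actually lying above the ``upper'' one somewhere nearby, i.e. an intersection, contradicting that the segment $(z(\alpha_1);z(\alpha_2))\subset\Omega^2$ and separates the arcs. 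This requires care but no hard estimates.
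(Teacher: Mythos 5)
Parts (i) and the minimizer argument for \eqref{ko17}, \eqref{ko17.1}, \eqref{ko17.2} in your proposal are essentially the paper's proof (the paper secures interiority of the minimizer by minimizing over $|\alpha-\beta|\in[3\varep_1,\varep_1^{-1}/3]$ and checking the boundary zones $[\varep_1,3\varep_1]\cup[\varep_1^{-1}/3,\varep_1^{-1}]$ carry chords $\gtrsim\varep_1$; your sketch needs the same Taylor bound slightly beyond $\varep_1$, not part (i) verbatim, but that is a cosmetic fix). The genuine gap is in your proof of \eqref{ko17.3}. A second-order Taylor expansion at the conjugate pair cannot distinguish the parallel from the antiparallel case: the local constraints (\eqref{ko17.2}, the minimality bound $|z(\alpha)-z(\beta)|\geq d$ for nearby parameters, \eqref{ko17.1}, and embeddedness of the two short arcs) are all statements about the \emph{traces} of the two arcs, and these traces can be literally identical in both cases --- e.g.\ two parallel straight segments at distance $d$, each traversed left-to-right, satisfy every one of them with no ``penetration'' whatsoever. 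Whether the tangents are parallel or antiparallel is a statement about the \emph{direction of traversal}, which is invisible to any local penetration/graph argument, so no contradiction can be extracted from local data alone.

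What is really needed, and what the paper supplies, is a global topological input describing how the two arcs are joined to each other and to infinity by the rest of the embedded curve. The paper closes the arc $z([\alpha_1,\alpha_2])$ with the chord $[z(\alpha_2);z(\alpha_1)]$ (a simple closed curve by \eqref{ko17.1}), applies the Jordan curve theorem, places the two unbounded pieces $z((-\infty,\alpha_1))$, $z((\alpha_2,\infty))$ in the unbounded component, and then shows that in the parallel case the two thin rectangles $R^\pm_\delta$ attached to these pieces would, together with the chord, fill a full neighborhood of the chord's midpoint, contradicting that this midpoint lies on the boundary of the bounded component. An alternative global route, closer to the sentence you wrote about $\Gamma_t$ separating $\Omega^1$ from $\Omega^2$, is to use that along the connected embedded curve the side of $z_\alpha$ on which $\Omega^2$ lies is constant (a continuity argument along $\Gamma_t$, already implicit in the proof of Proposition \ref{interface}); then \eqref{ko17.1} forces the open segment $(z(\alpha_1,t);z(\alpha_2,t))$ into a single region, while in the parallel case its two ends, being on the $+e$ side of one arc and the $-e$ side of the other with the same co-orientation, would abut different regions --- a contradiction. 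Either way, some such global argument must replace the Taylor-expansion step; as written, that step would fail.
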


\begin{proof}[Proof of Lemma \ref{chord-arc}] The conclusion \eqref{ko15} is a simple consequence of the assumptions in \eqref{zsmooth} and Taylor's formula.

In proving part (ii), we assume that $t$ is fixed and drop it from the notation. We fix $\alpha_1,\alpha_2\in\mathbb{R}$ such that
\begin{equation}\label{ko26}
|z(\alpha_1)-z(\alpha_2)|=\inf_{\alpha,\beta\in\mathbb{R},\,|\alpha-\beta|\in[3\varep_1,\varep_1^{-1}/3]}|z(\alpha)-z(\beta)|.
\end{equation}
In view of the assumption \eqref{ko16} and part (i), there are $\alpha_0,\beta_0\in\mathbb{R}$ with $|\alpha_0-\beta_0|\geq\eps_1$ such that $F(z)(\alpha_0,\beta_0)\leq 2\varep$. It follows that 
\begin{equation*}
\inf_{\alpha,\beta\in\mathbb{R},\,|\alpha-\beta|\in[\varep_1,\varep_1^{-1}]}|z(\alpha)-z(\beta)|\lesssim \varep.
\end{equation*}
On the other hand, if $|\alpha-\beta|\in[\varep_1,3\varep_1]\cup [\varep_1^{-1}/3,\varep_1^{-1}]$ then, using the assumption \eqref{zsmooth} and Taylor's formula, $|z(\alpha)-z(\beta)|\gtrsim \varep_1$. Therefore
\begin{equation}\label{ko27}
\inf_{\alpha,\beta\in\mathbb{R},\,|\alpha-\beta|\in[\varep_1,\varep_1^{-1}]}|z(\alpha)-z(\beta)|=\inf_{\alpha,\beta\in\mathbb{R},\,|\alpha-\beta|\in[3\varep_1,\varep_1^{-1}/3]}|z(\alpha)-z(\beta)|,
\end{equation}
and \eqref{ko17} follows.

The identity \eqref{ko17.1} follows from the definition and \eqref{ko27}. The identities \eqref{ko17.2} also follow from \eqref{ko27}, since the function $\alpha\to |z(\alpha)-z(\alpha_2)|^2$ is locally minimized when $\alpha=\alpha_1$, and the function $\alpha\to |z(\alpha_1)-z(\alpha)|^2$ is locally minimized when $\alpha=\alpha_2$.

We prove now the inequality \eqref{ko17.3}. We may assume that $\alpha_1<\alpha_2$ and consider the curve $\gamma:[0,2]\to\mathbb{R}^2$,
\begin{equation*}
\gamma(\mu):=
\begin{cases}
z((1-\mu)\alpha_1+\mu\alpha_2),\qquad&\text{ if }\mu\in[0,1];\\
(2-\mu)z(\alpha_2)+(\mu-1)z(\alpha_1),\qquad&\text{ if }\mu\in[1,2].
\end{cases}
\end{equation*}
Therefore, $\gamma$ travels along the curve $z$ from $z(\alpha_1)$ to $z(\alpha_2)$, and then back along the line segment $[z(\alpha_2);z(\alpha_1)]$. Clearly, $\gamma$ is a simple closed curve in the plane. Therefore, using the Jordan curve theorem, $\mathbb{R}^2\setminus\gamma=O_1\cup O_2$ where $O_1,O_2$ are disjoint connected open sets, $O_2$ is bounded, $O_1$ is unbounded, and $\gamma=\partial O_1=\partial O_2$. The curves $\{z(\alpha):\alpha\in(-\infty,\alpha_1)\}$ and $\{z(\alpha):\alpha\in(\alpha_2,\infty)\}$ are unbounded curves in the plane (using the periodicity assumption), and do not intersect the curve $\gamma$ (using \eqref{ko17.1}). Therefore
\begin{equation}\label{ko28}
z(\alpha)\in O_1\qquad\text{ for any }\alpha\in\mathbb{R}\setminus[\alpha_1,\alpha_2].
\end{equation}

Let $v:=z(\alpha_2)-z(\alpha_1)$. For $\delta>0$ we consider the sets
\begin{equation*}
\begin{split}
R_\delta^-:=\{z(\alpha)+\mu v:\,\alpha\in(\alpha_1-\delta,\alpha_1),\,\mu\in[0,2/3]\},\\
R_\delta^+:=\{z(\alpha)-\mu v:\,\alpha\in(\alpha_2,\alpha_2+\delta),\,\mu\in[0,2/3]\}.
\end{split}
\end{equation*}
The sets $R_\delta^{\pm}$ are clearly connected. Moreover, if $\delta$ is sufficiently small then
\begin{equation}\label{ko28.1}
R_\delta^-\cap\gamma=R_\delta^+\cap\gamma=\emptyset.
\end{equation}
Indeed, to prove \eqref{ko28.1} assume for contradiction that there is a point $p\in R_\delta^-\cap\gamma$. We have two cases: if $p=z(\beta)$ for some $\beta\in[\alpha_1,\alpha_2]$ then $z(\beta)=z(\alpha)+\mu v$ for some $\alpha\in  (\alpha_1-\delta,\alpha_1)$ and $\mu\in[0,2/3]$. Using the identity in the second line of \eqref{ko17} it follows that $|\alpha-\beta|\leq \varepsilon_1$. Therefore, using also \eqref{ko17.2}, $[z(\beta)-z(\alpha)]\cdot z_\alpha(\alpha_1)=0$ and $\alpha,\beta\in (\alpha_1-\delta,\alpha_1+\varep_1)$. This shows easily that $\alpha=\beta$, which gives a contradiction.

On the other hand, if $p\in (z(\alpha_1);z(\alpha_2))$ then $z(\alpha_1)+\mu_1v=z(\alpha)+\mu v$ for some $\alpha\in  (\alpha_1-\delta,\alpha_1)$, $\mu\in[0,2/3]$, and $\mu_1\in[0,1]$. Using \eqref{ko17.2}, it follows that $[z(\alpha_1)-z(\alpha)]\cdot z_\alpha(\alpha_1)=0$. As before, it follows that $\alpha=\alpha_1$, which is a contradiction. This completes the proof of \eqref{ko28.1}.

Using also \eqref{ko28} it follows that
\begin{equation}\label{ko29}
R_\delta^-\subseteq O_1,\qquad R_\delta^+\subseteq O_1.
\end{equation}
In view of \eqref{ko17.2}, the vectors $z_\alpha(\alpha_1)$ and $z_\alpha(\alpha_2)$ are either parallel or antiparallel. Assume for contradiction that the two vectors are parallel. Then the sets $R_\delta^-$ and $R_\delta^+$ are on opposite sides of the line between the points $z(\alpha_1)$ and $z(\alpha_2)$. Therefore there is a small neighborhood of the point $q=[z(\alpha_1)+z(\alpha_2)]/2$ which is entirely contained in $[z(\alpha_1);z(\alpha_2)]\cup R_\delta^-\cup R_\delta^+$. This is in contradiction with \eqref{ko29} and the assumption $q\in \partial(O_2)$ (which is a consequence of the Jordan curve theorem). It follows that the vectors $z_\alpha(\alpha_1)$ and $z_\alpha(\alpha_2)$ are antiparallel, which gives \eqref{ko17.3}.
\end{proof}

We prove now that the splash scenario can only be reached dynamically through the fluid $2$. More precisely:

\begin{lemma}\label{chord-arc2}
For any $\delta>0$ there is $\varep_3=\varep_3(A,T,\delta)$ sufficiently small with the following property: if $t\in [0,T]$, $\alpha,\beta\in\mathbb{R}$ satisfy  $|\alpha-\beta|\geq\delta$, and
\begin{equation*}
[z(\alpha,t);z(\beta,t)]\subseteq \overline{\Omega^1(t)}
\end{equation*}
then
\begin{equation*}
|z(\alpha,t)-z(\beta,t)|\geq \varep_3.
\end{equation*}
\end{lemma}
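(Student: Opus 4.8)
The plan is to argue by contradiction, using the incompressibility of the first fluid to obstruct the collapse of two interface points across $\Omega^1$. Suppose the conclusion fails: then there are sequences $t_n\in[0,T]$ and $\alpha_n,\beta_n\in\mathbb{R}$ with $|\alpha_n-\beta_n|\geq\delta$, $[z(\alpha_n,t_n);z(\beta_n,t_n)]\subseteq\overline{\Omega^1(t_n)}$, and $|z(\alpha_n,t_n)-z(\beta_n,t_n)|\to 0$. By the regularity bound \eqref{zsmooth} and compactness (passing to a subsequence, using periodicity to keep $\alpha_n$ bounded), we may assume $t_n\to t_\ast$, $\alpha_n\to\alpha_\ast$, $\beta_n\to\beta_\ast$ with $|\alpha_\ast-\beta_\ast|\geq\delta$ and $z(\alpha_\ast,t_\ast)=z(\beta_\ast,t_\ast)=:p$. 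So at time $t_\ast$ the interface self-intersects at $p$, with the two distinct arcs (near $\alpha_\ast$ and near $\beta_\ast$) touching there; by Lemma \ref{chord-arc} the two tangent directions $z_\alpha(\alpha_\ast,t_\ast)$ and $z_\alpha(\beta_\ast,t_\ast)$ are antiparallel, and a small bigon-shaped region $O_2^\ast$ of $\Omega^1(t_\ast)$ (the "trapped pocket" bounded by the two arcs) is pinched off at $p$.

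The key point is a volume (area) argument for the incompressible fluid $1$. First I would show that the self-intersection is a genuine \emph{pinch}: for $n$ large, the chord $[z(\alpha_n,t_n);z(\beta_n,t_n)]$ together with a short sub-arc of $\Gamma_{t_n}$ cuts a small region $D_n\subseteq\Omega^1(t_n)$ whose area tends to $0$ (this uses the $C^4$ bound and the antiparallel tangents — the pocket is roughly the region between two nearly-straight arcs a distance $|z(\alpha_n,t_n)-z(\beta_n,t_n)|$ apart, and at the limiting time this area is continuous in $t$ and one should track a fixed "marked" Lagrangian region of fluid $1$ that is being squeezed). Since $u^1$ is smooth up to the boundary (bound \eqref{u1smooth}) and divergence-free in $\Omega^1(t)$, the flow of fluid $1$ preserves area: transporting a fixed initial region $D_0\subseteq\Omega^1(0)$ of positive area by the Lagrangian flow $X(\cdot,t)$ of $u^1$ gives regions $D_0(t)\subseteq\Omega^1(t)$ of constant area $|D_0|>0$ for all $t\in[0,T]$, because $\partial_tz=u^1$ on $\Gamma_t$ by \eqref{cv3} so fluid particles never cross $\Gamma_t$ and the region stays inside $\Omega^1(t)$. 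The contradiction comes from choosing $D_0$ so that $D_0(t_\ast)$ is forced to lie in the degenerating pocket — more precisely, one shows that the pocket $D_n$ is invariant under the fluid-$1$ flow up to time $t_n$ (nothing can leave it since its boundary consists of interface arcs, which move with the fluid, plus the chord through $\Omega^1$ whose endpoints are collapsing), so if any positive-area chunk of fluid $1$ is ever inside such a pocket it stays trapped and its area must both be conserved and tend to zero.

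The main obstacle I expect is making precise the claim that some \emph{fixed} positive-area region of fluid $1$ is trapped inside the collapsing pocket, i.e. ruling out the possibility that the pocket $D_n$ is "refilled" from outside along the way or that it simply has small area at all times without trapping a genuine chunk of fluid. The clean way around this: run the area argument backward. Define $O_2(t_n)$ as the bounded Jordan domain cut out by the short arc of $\Gamma_{t_n}$ between $z(\alpha_n,t_n)$ and $z(\beta_n,t_n)$ together with the chord; by \eqref{ko17.1}-type reasoning its interior lies in $\Omega^1(t_n)$ and its area $\to 0$. Push $O_2(t_n)$ backward by the fluid-$1$ flow to time $0$; area conservation gives a region of area $|O_2(t_n)|$ at time $0$, which is fine for each $n$, so instead one fixes a small disk $B\subseteq O_2(t_{n_0})$ for a single large $n_0$, transports it by the flow, and shows $X(B,t)$ remains inside $O_2(t_n)$ for the relevant $n$'s by a continuity/separation argument (the boundary of $O_2(t)$ consists of fluid-$1$ particle trajectories on the $\Gamma$-part and a chord whose length shrinks, and $X(\cdot,t)$ cannot exit through a moving material boundary). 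Then $|B|=|X(B,t)|\leq |O_2(t)|\to 0$, contradicting $|B|>0$. Once this trapping is established the rest is routine, and one extracts the explicit $\varep_3=\varep_3(A,T,\delta)$ by quantifying each step (the area of a pocket between two $C^2$-curves a distance $r$ apart with antiparallel tangents is $\lesssim r^2$ times a length controlled by $A$ and $1/\delta$, and the flow map has Jacobian bounded above and below by $e^{\pm CT}$), rather than by the compactness argument, which only gives existence.
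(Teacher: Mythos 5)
There is a genuine gap: your mechanism (incompressibility plus a trapped pocket of fluid $1$ of vanishing area) does not work, and a telling symptom is that your argument never uses the time-zero chord-arc condition \eqref{charc0}. The lemma is false without that hypothesis -- nothing in \eqref{zsmooth}, \eqref{u1smooth}, \eqref{u20bound} prevents the initial configuration itself from having two interface points at parameter distance $\geq\delta$ separated by an arbitrarily thin sliver of fluid $1$ -- so any proof that does not invoke \eqref{charc0} cannot be correct. Concretely, the area argument breaks at both of its key claims. First, the Jordan domain bounded by the chord and the arc of $\Gamma_{t_n}$ between $z(\alpha_n,t_n)$ and $z(\beta_n,t_n)$ need not lie in $\Omega^1(t_n)$, and its area need not be small: the arc has length $\gtrsim\delta$, and under a $C^2$ bound two curves that are distance $r$ apart with (anti)parallel tangents at one point only stay $O(r)$-close over a parameter length $\sim\sqrt r$; beyond that the enclosed region can be macroscopic (think of a large pocket of fluid $1$ being pinched off by a thin neck: only the neck is thin). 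Second, the trapping claim fails because the chord (or any cross-cut at the ends of the thin gap) is not a material curve: fluid $1$ can be squeezed out across it, which is exactly how an incompressible fluid evacuates a closing gap, so area conservation alone cannot obstruct the approach of the two boundary points. Finally, the compactness framing is structurally inadequate for this statement: it can only produce a constant depending on the particular solution (and the limiting self-intersection at $t_\ast$ already contradicts the standing embeddedness assumption, so the contradiction is vacuous), whereas the lemma requires an effective $\varep_3=\varep_3(A,T,\delta)$, independent of the solution and of the non-effective infimum in \eqref{qual}, because it feeds into Proposition \ref{chord-arc3} and the final Gronwall argument.

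The paper's proof uses a different, genuinely quantitative mechanism that you should compare with: from \eqref{u1smooth}, harmonicity of the components of $u^1$, the Cauchy--Riemann relations and the maximum principle, one gets $\|u^1\|_{L^\infty(\Omega^1(t))}+\|\nabla u^1\|_{L^\infty(\Omega^1(t))}\lesssim 1$; transporting the two points $z(\alpha_0,t_0),z(\beta_0,t_0)$ backward in time by the Lagrangian flow of $u^1$ (Gronwall, using that the connecting segment lies in $\overline{\Omega^1(t_0)}$) keeps their spatial distance $\lesssim\varep_3$, while the induced reparametrization flow \eqref{ko53} keeps their parameter separation $\gtrsim\delta$; at $t=0$ this contradicts \eqref{charc0} once $\varep_3$ is small. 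It is the Lipschitz bound on $u^1$ together with the initial chord-arc condition, not incompressibility, that rules out a collapse through fluid $1$.
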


\begin{proof}[Proof of Lemma \ref{chord-arc2}] We may assume that $\delta$ is sufficiently small (depending on $A$). Assume, for contradiction, that the conclusion of the lemma fails. Therefore there are points $t_0\in[0,T]$ and $\alpha_0,\beta_0\in\mathbb{R}$ such that
\begin{equation}\label{ko30}
\begin{split}
&|\alpha_0-\beta_0|\geq\delta,\qquad |z(\alpha_0,t_0)-z(\beta_0,t_0)|\leq\varep_3,\\
&[z(\alpha_0,t_0);z(\beta_0,t_0)]\subseteq \overline{\Omega^1(t_0)}.
\end{split}
\end{equation}

We use the assumption \eqref{u1smooth}. The identities $\Delta u^1_m=0$, $m=1,2$, and the maximum principle show that
\begin{equation}\label{ko31}
\sup_{t\in[0,T]}\|u^1(t)\|_{L^\infty(\Omega^1(t))}\lesssim 1.
\end{equation}
Moreover, using $v^1(\alpha,t)=u^1(z(\alpha,t),t)$ and differentiating in $\alpha$,
\begin{equation*}
\partial_\al v^1_m(\alpha,t)=\sum_{j=1}^2(\partial_\al z_j)(\alpha,t)(\partial_ju^1_m)(z(\alpha,t),t),
\end{equation*}
for $m=1,2$. Using also the identities $\partial_1u_1^1+\partial_2u_2^1=0$, $\partial_1u_2^1-\partial_2u_1^1=0$ and the smoothness of the functions $v^1$ and $z$, it follows easily that $\|\partial_ju^1_m(t)\|_{L^\infty(\Gamma_t)}\lesssim 1$. Using the maximum principle,
\begin{equation}\label{ko32}
\sup_{t\in[0,T],\,j,m\in\{1,2\}}\|\partial_ju^1_m(t)\|_{L^\infty(\Omega^1(t))}\lesssim 1.
\end{equation}

Recall the equation for the motion of the interface in \eqref{syst}, which can be written in the form
\begin{equation}\label{ko51}
z_t(\alpha,t)=u^1(z(\alpha,t),t)+A^1(\alpha,t)z_\alpha(\alpha,t),
\end{equation}
for $(\alpha,t)\in\mathbb{R}\times[0,T]$, where $A^1$ is a continuous function satisfying $\|A^1\|_{C^3(\mathbb{R}\times[0,T])}\lesssim 1$. We define Lagrangian label $P=P^1:\overline{\Omega^1(t_0)}\times [0,T]\to\mathbb{R}^2$ as the solution of the transport equation,
\begin{equation}\label{ko50}
P(q,t_0)=q,\qquad \partial_tP(q,t)=u^1(P(q,t),t).
\end{equation}
Using these two equations is it easy to see that $P$ is a well-defined function (in the sense that $P(q,t)\in \overline{\Omega^1(t)}$ for any $q\in \overline{\Omega^1(t_0)}$). Moreover $P(\Omega^1(t_0),t)\subseteq\Omega^1(t)$ and $P(\Gamma_{t_0},t)\subseteq\Gamma_t$; more precisely
\begin{equation}\label{ko52}
P(z(\mu_0,t_0),t)=z(\kappa(\mu_0,t),t),
\end{equation}
for any $\mu_0\in\mathbb{R}$, where $\kappa$ satisfies the transport equation
\begin{equation}\label{ko53}
(\partial_t\kappa)(\mu,t)=-A^1(\kappa(\mu,t),t),\qquad \kappa(\mu,t_0)=\mu.
\end{equation}

In view of \eqref{ko50} and \eqref{ko32},
\begin{equation*}
|P(q,t)-P(q',t)|\lesssim |q-q'|
\end{equation*}
for any $t\in[0,T]$ and any points $q,q'\in\overline{\Omega^1(t_0)}$ with the property that $[q,q']\subseteq\overline{\Omega^1(t_0)}$. In particular, with $\alpha_0,\beta_0$ as in \eqref{ko30},
\begin{equation*}
|P(z(\alpha_0,t_0),0)-P(z(\beta_0,t_0),0)|\lesssim \varep_3.
\end{equation*}
Therefore, using \eqref{ko52},
\begin{equation*}
|z(\kappa(\alpha_0,0),0)-z(\kappa(\beta_0,0),0)|\lesssim\varep_3.
\end{equation*}
On the other hand, using \eqref{ko53}, $|\partial_t\kappa(\alpha_0,t)-\partial_t\kappa(\beta_0,t)|\lesssim |\kappa(\alpha_0,t)-\kappa(\beta_0,t)|$ for any $t\in[0,t_0]$. Recalling also that $|\kappa(\alpha_0,t_0)-\kappa(\beta_0,t_0)|=|\alpha_0-\beta_0|\geq\delta$, see \eqref{ko30}, it follows that
\begin{equation*}
|\kappa(\alpha_0,0)-\kappa(\beta_0,0)|\gtrsim\delta.
\end{equation*}
The last two inequalities are in contradiction with the chord-arc condition \eqref{charc0} at time $t=0$. This completes the proof of the lemma.
\end{proof}

We can now provide a complete description of a neighborhood of a conjugate pair: after a suitable rigid motion, a small neighborhood of a conjugate pair (of size depending quantitatively on $A$ and $T$) can be identified with a neighborhood of the origin in $\mathbb{R}^2$ in such a way that the curve $z(\alpha,t)$, when restricted to this small neighborhood, becomes the union of two smooth graphs that are very close and almost parallel at the origin. Moreover, the domain $\Omega^2(t)$ corresponds to the region between these two curves while the domain $\Omega^1(t)$ corresponds to the complement of this region. More precisely:

\begin{proposition}\label{chord-arc3} (i) There are small constants $\varep_4,\varep_5\in(0,\varep_2)$ with $\varep_4\leq\varep_5^{100}$ (that may depend only on $A,T$) with the following property: assume that $\inf_{\alpha,\beta\in\mathbb{R}}F(z)(\alpha,\beta,t)=\varep\leq\varep_4$ for some $t\in[0,T]$, let $\alpha_1,\alpha_2$ be as in \eqref{ko17}, and let
\begin{equation*}
d:=|z(\alpha_2,t)-z(\alpha_1,t)|,\qquad e:=\frac{z(\alpha_2,t)-z(\alpha_1,t)}{|z(\alpha_2,t)-z(\alpha_1,t)|}.
\end{equation*} 
Then
\begin{equation}\label{ko70}
(z(\alpha_1,t);z(\alpha_2,t))\subseteq \Omega^2(t).
\end{equation}
and
\begin{equation}\label{ko70.5}
z_\alpha(\alpha_1,t)\cdot e=z_\alpha(\alpha_2,t)\cdot e=0,\qquad z_\alpha(\alpha_1,t)\cdot z_\alpha(\alpha_2,t)<0.
\end{equation}
Moreover $d\approx\varep$ and
\begin{equation}\label{ko71}
\begin{split}
&\big\{z(\beta,t)-\rho e:|\beta-\alpha_1|\leq \varep_5,\,\rho\in(0,\varep_5)\big\}\subseteq\Omega^1(t),\\
&\big\{z(\beta,t)+\rho e:|\beta-\alpha_2|\leq \varep_5,\,\rho\in(0,\varep_5)\big\}\subseteq\Omega^1(t),\\
&\big\{z(\beta,t)+\rho e:|\beta-\alpha_1|\leq \varep_5,\,\rho\in(0,d)\big\}\subseteq\Omega^2(t),\\
&\big\{z(\beta,t)-\rho e:|\beta-\alpha_2|\leq \varep_5,\,\rho\in(0,d)\big\}\subseteq\Omega^2(t).
\end{split}
\end{equation}

(ii) Let $R:\mathbb{R}^2\to\mathbb{R}^2$ denote the rigid motion transformation that satisfies 
\begin{equation}\label{ko74}
R(0,0)=\frac{z(\alpha_1,t)+z(\alpha_2,t)}{2},\quad R(0,-d/2)=z(\alpha_1,t),\quad R(0,d/2)=z(\alpha_2,t).
\end{equation}
Then there are functions $f_1,f_2,\beta_1,\beta_2:(-\varep_5^4,\varep_5^4)\to (-\varep_5^2,\varep_5^2)$ such that
\begin{equation}\label{ko75}
R(\rho,f_1(\rho))=z(\alpha_1+\beta_1(\rho),t),\qquad R(\rho,f_2(\rho))=z(\alpha_2+\beta_2(\rho),t)
\end{equation}
for any $\rho\in(-\varep_5^4,\varep_5^4)$. Moreover
\begin{equation}\label{ko76}
\begin{split}
&f_1(0)=-d/2,\quad f_2(0)=d/2,\quad\beta_1(0)=\beta_2(0)=0,\quad f'_1(0)=f'_2(0)=0,\\
&\|f_1\|_{C^4}+\|f_2\|_{C^4}+\|\beta_1\|_{C^4}+\|\beta_2\|_{C^4}\lesssim 1,\\
&-\beta'_1(\rho)\beta'_2(\rho)\approx 1\text{ and } f_2(\rho)-f_1(\rho)\geq d\qquad\text{ for all }\rho\in(-\varep_5^4,\varep_5^4).
\end{split}
\end{equation}
Finally,
\begin{equation}\label{ko72}
\begin{split}
&R\Big(\big\{(\rho,y):\rho\in(-\varep_5^4,\varep_5^4),\,y\in (f_2(\rho),\varep_5^2)\big\}\Big)\subseteq\Omega^1(t),\\
&R\Big(\big\{(\rho,y):\rho\in(-\varep_5^4,\varep_5^4),\,y\in (-\varep_5^2,f_1(\rho))\big\}\Big)\subseteq\Omega^1(t),\\
&R\Big(\big\{(\rho,y):\rho\in(-\varep_5^4,\varep_5^4),\,y\in (f_1(\rho),f_2(\rho))\big\}\Big)\subseteq\Omega^2(t).
\end{split}
\end{equation}
\end{proposition}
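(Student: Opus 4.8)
The plan is to read the elementary assertions off Lemmas~\ref{chord-arc} and~\ref{chord-arc2}, to produce the local two-graph description with the implicit function theorem and the minimality built into \eqref{ko17}, and to fix the labels of $\Om^1(t),\Om^2(t)$ by a Jordan-curve argument. Throughout I fix $t$ and a conjugate pair $\al_1,\al_2$ as in \eqref{ko17}, and suppress $t$. Several conclusions are immediate: \eqref{ko70.5} is \eqref{ko17.2}--\eqref{ko17.3} rewritten in terms of $e$, and $d\approx\varep$ is the second line of \eqref{ko17}. For \eqref{ko70}, by \eqref{ko17.1} the open segment $(z(\al_1);z(\al_2))$ is disjoint from $\Ga_t$, hence, being connected, it lies entirely in $\Om^1(t)$ or entirely in $\Om^2(t)$; if it lay in $\overline{\Om^1(t)}$, Lemma~\ref{chord-arc2} with $\de=2\varep_1$ (allowed since $|\al_1-\al_2|\ge 2\varep_1$) would force $d\ge\varep_3(A,T,2\varep_1)>0$, contradicting $d\approx\varep\le\varep_4$ once $\varep_4$ is chosen small enough; so the segment lies in $\Om^2(t)$.

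For part (ii), set up $R$ as in \eqref{ko74}; in the $R$-coordinates $z(\al_1),z(\al_2)$ are $(0,-d/2),(0,d/2)$ and the unit vector $e$ is $+e_2$, so by \eqref{ko70.5} the tangents $z_\al(\al_1),z_\al(\al_2)$ are horizontal. By the implicit function theorem and \eqref{zsmooth}, the arcs of $z$ through $z(\al_1)$ and $z(\al_2)$ are, near those points, $C^4$ graphs $y=f_1(\rho)$ and $y=f_2(\rho)$ with $f_1(0)=-d/2$, $f_2(0)=d/2$, $f_1'(0)=f_2'(0)=0$; inverting the strictly monotone $\rho$-component of each parametrization produces $\be_1,\be_2$ with $\be_i(0)=0$ and the $C^4$ bounds of \eqref{ko76}, valid on $(-\varep_5^4,\varep_5^4)$ once $\varep_5$ is small in terms of $A,T$. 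Differentiating $R(\rho,f_i(\rho))=z(\al_i+\be_i(\rho))$ at $\rho=0$ yields $\be_i'(0)z_\al(\al_i)=R_\ast e_1$, hence $|\be_i'(0)|=|z_\al(\al_i)|^{-1}\approx 1$; since $z_\al(\al_1),z_\al(\al_2)$ are antiparallel the product $\be_1'(0)\be_2'(0)$ is negative, and by continuity $-\be_1'(\rho)\be_2'(\rho)\approx 1$ throughout. Finally, for each $\rho$ the points $z(\al_1+\be_1(\rho))$ and $z(\al_2+\be_2(\rho))$ share the same $R$-first coordinate, so their distance equals $|f_2(\rho)-f_1(\rho)|$; since $|\be_i(\rho)|<\varep_1$ we have $|(\al_1+\be_1(\rho))-(\al_2+\be_2(\rho))|\in[\varep_1,\varep_1^{-1}]$, so this distance is $\ge d$ by \eqref{ko17}, and since it equals $d>0$ at $\rho=0$, continuity gives $f_2(\rho)-f_1(\rho)\ge d$.

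It remains to identify the domains, which is the technical heart of the proposition. First, from the structure of the domains, from $(z(\al_1);z(\al_2))\subseteq\Om^2(t)$, and from \eqref{ko70.5} one deduces the local orientation: the $(-e)$-side of $z(\al_1)$ lies in $\Om^1(t)$ and the $(+e)$-side of $z(\al_2)$ lies in $\Om^1(t)$. Indeed $z(\al_i)\pm\rho[z_\al(\al_i)]^\perp$ lie in $\Om^2(t),\Om^1(t)$ for small $\rho>0$, the vector $[z_\al(\al_i)]^\perp$ is parallel to $e$, the segment $(z(\al_1);z(\al_2))\subseteq\Om^2(t)$ leaves $z(\al_1)$ in the $+e$ direction and arrives at $z(\al_2)$ from the $-e$ direction, and the two sides of the horizontal-tangent interface at $z(\al_i)$ cannot both lie in $\Om^2(t)$; this forces the orientation. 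Next one shows that inside the thin box $R((-\varep_5^4,\varep_5^4)\times(-\varep_5^2,\varep_5^2))$ --- and inside the $\varep_5$-neighborhoods of $z(\al_1),z(\al_2)$ used for \eqref{ko71} --- the only portions of $\Ga_t$ are the graphs of $f_1$ and $f_2$: any other point $z(\ga)$ there must have $|\ga-\al_1|\in[\varep_1,\varep_1^{-1}]$ (parameters close to $\al_1$ or $\al_2$ are excluded, since there the interface is a nearly horizontal graph of speed $\approx 1$ and so leaves the thin box after parameter length $\lesssim\varep_5^4$), hence $|z(\ga)-z(\al_1)|\ge d$ by minimality, and such a third strand is then excluded by combining the embeddedness of $z(\cdot,t)$, the Jordan curve theorem applied as in Lemma~\ref{chord-arc} to the closed curve formed by an arc of $z$ and the segment $[z(\al_1);z(\al_2)]$, the local orientation above, and the scale separation $d\approx\varep\le\varep_4\le\varep_5^{100}$. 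Once $\Ga_t$ inside the box is exactly $\mathrm{graph}(f_1)\cup\mathrm{graph}(f_2)$, the three open regions $\{y>f_2(\rho)\}$, $\{f_1(\rho)<y<f_2(\rho)\}$ and $\{y<f_1(\rho)\}$ are connected, disjoint from $\Ga_t$, and each contains a point of known label (a point just above $z(\al_2)$, a point of the segment, a point just below $z(\al_1)$), hence lie in $\Om^1(t),\Om^2(t),\Om^1(t)$ respectively; this gives \eqref{ko72}, and the analogous statement on the $\varep_5$-neighborhoods gives \eqref{ko71}.

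The step I expect to be the main obstacle is the exclusion of spurious strands of $\Ga_t$ near the conjugate pair. Because the conjugate distance $d\approx\varep$ is far smaller than the scale $\varep_5$ of the neighborhood on which the two-graph picture is needed, such strands cannot be ruled out by size alone; one must argue topologically, combining the minimality defining the conjugate pair, the embeddedness of the interface, the Jordan curve theorem, and the local orientation of $\Om^1(t),\Om^2(t)$ near $z(\al_1)$ and $z(\al_2)$.
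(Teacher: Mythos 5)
Your treatment of the easy parts coincides with the paper's proof: \eqref{ko70} via \eqref{ko17.1} and Lemma \ref{chord-arc2}, \eqref{ko70.5} and $d\approx\varep$ from Lemma \ref{chord-arc}, and the construction of $f_1,f_2,\beta_1,\beta_2$ with the bounds \eqref{ko76} by inverting the horizontal component of the parametrization, with $f_2-f_1\geq d$ from minimality. The genuine gap is exactly the step you yourself flag as the main obstacle: the exclusion of additional strands of $\Gamma_t$ from the relevant neighborhoods, on which both your route to \eqref{ko71} and your proof of the last line of \eqref{ko72} rest. There you only list ingredients (embeddedness, the Jordan curve theorem as in Lemma \ref{chord-arc}, the local orientation, the scale separation $d\leq\varep_4\leq\varep_5^{100}$) without giving an argument, and these ingredients do not obviously suffice: the Jordan curve formed by the arc $z([\alpha_1,\alpha_2],t)$ and the segment $[z(\alpha_1,t);z(\alpha_2,t)]$ only places the points $z(\gamma,t)$, $\gamma\notin[\alpha_1,\alpha_2]$, in the unbounded component, which does not keep them (nor the arc with $\gamma\in(\alpha_1,\alpha_2)$, which is part of the curve itself) out of the thin box, since that component reaches arbitrarily close to the segment; and minimality only yields $|z(\gamma,t)-z(\alpha_1,t)|\geq d$, which is useless at the scale $\varep_5^2\gg d$ of the box. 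So the heart of the proposition is asserted rather than proved.

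For comparison, the paper closes this step with two different, quantitative mechanisms, neither of which appears in your list. For \eqref{ko71} it never needs a ``no extra strands'' statement on a large neighborhood: the open vertical segments of height $d$ over the arc near $\alpha_1$ cannot meet $\Gamma_t$ because of the minimizing property in \eqref{ko17}, the segments of height $\varep_5$ on the other side cannot meet $\Gamma_t$ because of Lemma \ref{chord-arc2}, and the connectedness of $\Omega^1(t)$ and $\Omega^2(t)$, together with the local orientation, then labels the tubes. For the middle region in \eqref{ko72} it argues by first contact: if a strand of $\Gamma_t$ enters $\{f_1(\rho)<y<f_2(\rho)\}$, then at the first $\rho_0$ of contact the tangent $z_\alpha(\mu_0,t)$ must be parallel or antiparallel to $e$, hence $|\mu_0-\alpha_1|\gtrsim 1$; a Taylor expansion at $\mu_0$ shows that this nearly vertical strand, starting at height $|y_0|\lesssim\varep_5^8$, stays within horizontal distance $\lesssim\varep_5^8$ of $\rho_0$ over parameter length $\varep_5^4$ and therefore reaches points with $y\in(-\varep_5^3,f_1(\rho))$, $|\rho|\lesssim\varep_5^4$, which by the already established second line of \eqref{ko72} lie in $\Omega^1(t)$ --- impossible for points of $\Gamma_t$. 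Some quantitative transversality argument of this kind (vertical tangency at first contact combined with the one-sided inclusions proved beforehand) is what your outline is missing; without it, \eqref{ko71} and the third inclusion in \eqref{ko72} are not established.
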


\begin{proof}[Proof of Proposition \ref{chord-arc3}] As before, we assume that $t$ is fixed and drop it from the notation.

The conclusions in part (i) follow easily from Lemma \ref{chord-arc} and Lemma \ref{chord-arc2}. Indeed, the existence of the points $\alpha_1,\alpha_2$ satisfying \eqref{ko17} and $|z(\alpha_2)-z(\alpha_1)|\approx \varep$ follows from Lemma \ref{chord-arc} (ii), provided that $\varep_4\leq\varep_2$. The conclusion \eqref{ko70} then follows from \eqref{ko17.1} and Lemma \ref{chord-arc2}, provided that $\delta=\varep_1$ and $\varep_4$ is sufficiently small. 

The conclusions in \eqref{ko70.5} follow from \eqref{ko17.1} and \eqref{ko17.2}.

To prove \eqref{ko71} we notice first that $\{z(\beta)+\rho' e:|\beta-\alpha_1|\leq 2\varep_5,\,\rho'\in(0,\delta')\}\subseteq \Omega^2$, provided that $\delta'$ is sufficienly small. This is due to the fact that $z$ is a regular curve in a neighborhood of $\alpha_1$, which does not self-intersect, the assumption $\Omega^2$ connected, and \eqref{ko70}. Since $\Gamma_t\subseteq\partial\Omega^1$, it follows that $\{z(\beta)-\rho' e:|\beta-\alpha_1|\leq 3\varep_5/2,\,\rho'\in(0,\delta')\}\subseteq \Omega^1$, provided that $\delta'$ is sufficienly small. 

The open segments $(z(\beta);z(\beta)+de)$, $|\beta-\alpha_1|\leq \varep_5$, cannot intersect the curve $\Gamma_t$, in view of the definition \eqref{ko17} of the points $\alpha_1,\alpha_2$ as a distance-minimizing pair. Therefore, since $\Omega^2$ is connected, these open segments are included in $\Omega^2$. Similarly, the open segments $(z(\beta);z(\beta)-\varep_5e)$, $|\beta-\alpha_1|\leq \varep_5$, cannot intersect the curve $\Gamma_t$ in view of Lemma \ref{chord-arc2}. Therefore, since $\Omega^1$ is connected, these open segments are included in $\Omega^1$. These arguments show that
\begin{equation*}
\begin{split}
&\big\{z(\beta)-\rho e:|\beta-\alpha_1|\leq \varep_5,\,\rho\in(0,\varep_5)\big\}\subseteq\Omega^1,\\
&\big\{z(\beta)+\rho e:|\beta-\alpha_1|\leq \varep_5,\,\rho\in(0,d)\big\}\subseteq\Omega^2.
\end{split}
\end{equation*}
The other two inclusions in \eqref{ko71} follow by a similar argument in a neighborhood of the point $\alpha_2$. 

To prove part (ii) we define the rigid motion transformation $R$ such that \eqref{ko74} holds. Then we define the functions $g_1,h_1:(-\varep_5^2,\varep_5^2)\to (-\varep_5,\varep_5)$ such that
\begin{equation*}
z(\alpha_1+\beta)=R(g_1(\beta),h_1(\beta)).
\end{equation*}
Clearly, $g_1(0)=0$ and $h_1(0)=-d/2$. Moreover, using \eqref{zsmooth}, $\|g_1\|_{C^4}+\|h_1\|_{C^4}\lesssim 1$. Using $z_\alpha(\alpha_1)\cdot e=0$ (see \eqref{ko17.2}), it follows that $h'_1(0)=0$. Using $|z_{\alpha}(\alpha_1)|\geq 1/A$ (see \eqref{zsmooth}), it follows that $|g'_1(0)|\approx 1$. Therefore, the function $g_1$ is locally invertible, in a sufficiently small neighborhood of $0$. We can define $\beta_1:=g_1^{-1}$ and $f_1:=h_1\circ g_1^{-1}$ and it follows that
\begin{equation*}
f_1(0)=-d/2,\quad\beta_1(0)=0,\quad f'_1(0)=0,\quad \|f_1\|_{C^4}+\|\beta_1\|_{C^4}\lesssim 1,\quad |\beta'_1(0)|\approx 1.
\end{equation*}
Similarly, we define the functions $g_2,h_2$ such that $z(\alpha_2+\beta)=R(g_2(\beta),h_2(\beta))$, and then define $\beta_2:=g_2^{-1}$ and $f_2:=h_2\circ g_2^{-1}$. As before, it follows that
\begin{equation*}
f_2(0)=d/2,\quad\beta_2(0)=0,\quad f'_2(0)=0,\quad \|f_2\|_{C^4}+\|\beta_2\|_{C^4}\lesssim 1,\quad |\beta'_2(0)|\approx 1.
\end{equation*}
The inequality $f_2(\rho)-f_1(\rho)\geq d$ in the last line of \eqref{ko76} follows from definition \eqref{ko17} of the points $\alpha_1,\alpha_2$ as a distance-minimizing pair. Finally, $-\beta'_1(0)\beta'_2(0)>0$, which is a consequence of \eqref{ko70.5}, therefore $-\beta'_1(\rho)\beta'_2(\rho)\approx 1$  for all $\rho\in(-\varep_5^4,\varep_5^4)$ as desired. This completes the proof of \eqref{ko76}. 

The first two inclusions in \eqref{ko72} are direct consequences of the first two inclusions in \eqref{ko71}. To prove the last conclusion, we notice first that $R\Big(\big\{(0,y):y\in (f_1(0),f_2(0))\big\}\Big)\subseteq\Omega^2$, as a consequence of \eqref{ko70}. Assume, for contradiction, that the last inclusion fails, so there is $\rho_0\in(-\varep_5^4,\varep_5^4)$ and $y_0\in (f_1(\rho_0),f_2(\rho_0))$ such that $R(\rho_0,y_0)\in\Gamma_t$ and, in addition, $R(\rho,y)\in\Omega^2$ whenever $|\rho|<|\rho_0|$ and $y\in (f_1(\rho),f_2(\rho))$ (recall the last two inclusions in \eqref{ko71}). Therefore, there is $\mu_0\in\mathbb{R}$ such that $z(\mu_0)=R(\rho_0,y_0)$ and $z_\alpha(\mu_0)$ is parallel or anti-parallel to the vector $e$. Since $z_\alpha(\alpha_1)\cdot e=z_\alpha(\alpha_1)\cdot e=0$, it follows that $|\alpha_1-\mu_0|\gtrsim 1$. 

We will derive the contradiction by showing that the curve $z$ has to self-intersect, i.e. there are points $\alpha'$ in a small neighborhood of $\alpha_1$ and $\mu'$ in a small neighborhood of $\mu_0$ such that $z(\alpha')=z(\mu')$. More precisely, using Taylor's expansion around the point $\mu_0$,
\begin{equation*}
\big|z(\mu_0+\beta)-z(\mu_0)-\beta z_\alpha(\mu_0)\big|\lesssim \beta^2.
\end{equation*}
Assuming that $z_\alpha(\mu_0)=ke$, $|k|\approx 1$, it follows that $\big|R^{-1}(z(\mu_0+\beta))-(\rho_0,y_0+k\beta)\big|\lesssim\beta^2$. Since $|y_0|\lesssim\rho_0^2\lesssim \varep_5^8$ (using the fact that $f'_1(0)=f'_2(0)=0$) and letting $R^{-1}(z(\mu_0+\beta))=(\rho,y)$, it follows that $|\rho|\lesssim \varepsilon_5^4$ and $|y-k\beta|\lesssim \varep_5^8$ provided that $|\beta|\leq\varep_5^4$. Therefore there is $\beta$ with $|\beta|\in[\varep_5^4/2,\varep_5^4]$ such that $|\rho|\lesssim \varepsilon_5^4$ and $y\in (-\varep_5^3,f_1(\rho))$, in contradiction with the inclusion in the second line of \eqref{ko72}. This completes the proof of the proposition.
\end{proof}

\subsection{The Birkhoff--Rott operator}\label{BiRo} In this subsection we consider the Birkhoff--Rott integral operator
\begin{equation}\label{ko100}
BR(z,\omega)(\alpha,t)=\frac{1}{2\pi}p.v.\int_{\mathbb{R}}\frac{(z(\alpha,t)-z(\beta,t))^\perp}{|z(\alpha,t)-z(\beta,t)|^2}\omega(\beta,t)\,d\beta
\end{equation}
defined in \eqref{ko1}. We assume that we are in the setting of Proposition \ref{chord-arc3}, namely that there is a point $t\in[0,T]$ such that  $\inf_{\alpha,\beta\in\mathbb{R}}F(z)(\alpha,\beta,t)=\varep\leq\varep_4$. We fix a distance-minimizing pair $(\alpha_1,\alpha_2)=(\alpha_1(t),\alpha_2(t))$ and use the notation in Proposition \ref{chord-arc3}. 

We will only be considering points $z$ in an $\varep_5^{40}$ neighborhood of the point $\frac{z(\alpha_1,t)+z(\alpha_2,t)}{2}$. We consider first the Birkhoff--Rott contribution of points far away. More precisely, assume $\psi:\mathbb{R}\to[0,1]$ is a smooth even function supported in the interval $[-1,1]$ and equal to $1$ in the interval $[-1/2,1/2]$, let $\psi_r(\mu):=\psi(\mu/r)$, $r>0$, and define the function $S^\infty_t:R(B(0,\varep_5^{40}))\to\mathbb{R}^2$,
\begin{equation}\label{ko101}
S^\infty_t(z):=\frac{1}{2\pi}p.v.\int_{\mathbb{R}}\frac{(z-z(\beta,t))^\perp}{|z-z(\beta,t)|^2}\omega(\beta,t)[1-\psi_{\varep_5^{10}}(\beta-\alpha_1)-\psi_{\varep_5^{10}}(\beta-\alpha_2)]\,d\beta.
\end{equation} 
Recall that $|z(\beta,t)-(\beta,0)|\lesssim 1$. In view of \eqref{ko75}--\eqref{ko72} and the periodicity and uniform boundedness of $\omega$ (see Lemma \ref{vort}), we have
\begin{equation}\label{ko102}
\|S^\infty_t\|_{C^3(R(B(0,\varep_5^{40})))}\lesssim 1.
\end{equation}

We define also, for $m\in\{1,2\}$,
\begin{equation*}
S^m_t(\alpha):=\frac{1}{2\pi}p.v.\int_{\mathbb{R}}\frac{(z(\alpha,t)-z(\beta,t))^\perp}{|z(\alpha,t)-z(\beta,t)|^2}\omega(\beta,t)\psi_{\varep_5^{10}}(\beta-\alpha_m)\,d\beta.
\end{equation*}
Therefore, letting $\widetilde{R}:\mathbb{R}^2\to\mathbb{R}^2$, $\widetilde{R}(x):=R(x)-R(0)$, and using the formulas \eqref{ko75}, we have
\begin{equation}\label{ko103}
\begin{split}
&S^1_t(\alpha_1+\beta_1(\mu))=\frac{1}{2\pi}p.v.\int_{\mathbb{R}}\frac{\widetilde{R}\big(\mu-\rho,f_1(\mu)-f_1(\rho)\big)^\perp}{|\mu-\rho|^2+|f_1(\mu)-f_1(\rho)|^2}\omega(\alpha_1+\beta_1(\rho))\beta'_1(\rho)\psi_{\varep_5^{10}}(\beta_1(\rho))\,d\rho,\\
&S^1_t(\alpha_2+\beta_2(\mu))=\frac{1}{2\pi}p.v.\int_{\mathbb{R}}\frac{\widetilde{R}\big(\mu-\rho,f_2(\mu)-f_1(\rho)\big)^\perp}{|\mu-\rho|^2+|f_2(\mu)-f_1(\rho)|^2}\omega(\alpha_1+\beta_1(\rho))\beta'_1(\rho)\psi_{\varep_5^{10}}(\beta_1(\rho))\,d\rho,\\
&S^2_t(\alpha_1+\beta_1(\mu))=\frac{1}{2\pi}p.v.\int_{\mathbb{R}}\frac{\widetilde{R}\big(\mu-\rho,f_1(\mu)-f_2(\rho)\big)^\perp}{|\mu-\rho|^2+|f_1(\mu)-f_2(\rho)|^2}\omega(\alpha_2+\beta_2(\rho))\beta'_2(\rho)\psi_{\varep_5^{10}}(\beta_2(\rho))\,d\rho,\\
&S^2_t(\alpha_2+\beta_2(\mu))=\frac{1}{2\pi}p.v.\int_{\mathbb{R}}\frac{\widetilde{R}\big(\mu-\rho,f_2(\mu)-f_2(\rho)\big)^\perp}{|\mu-\rho|^2+|f_2(\mu)-f_2(\rho)|^2}\omega(\alpha_2+\beta_2(\rho))\beta'_2(\rho)\psi_{\varep_5^{10}}(\beta_2(\rho))\,d\rho,
\end{split}
\end{equation}
for any $\mu\in(-\varep_5^{50},\varep_5^{50})$. Moreover,
\begin{equation}\label{ko104}
\frac{z_\alpha(\alpha_1+\beta_1(\mu))}{|z_\alpha(\alpha_1+\beta_1(\mu))|^2}=\frac{\beta'_1(\mu)\widetilde{R}(1,f'_1(\mu))}{1+|f'_1(\mu)|^2},\qquad \frac{z_\alpha(\alpha_2+\beta_2(\mu))}{|z_\alpha(\alpha_2+\beta_2(\mu))|^2}=\frac{\beta'_2(\mu)\widetilde{R}(1,f'_2(\mu))}{1+|f'_2(\mu)|^2}.
\end{equation} 
Let $\omega_1,\omega_2:(-\varep_5^6,\varep_5^6)\to\mathbb{R}$, $W_1,W_2:(-\varep_5^6,\varep_5^6)\to\mathbb{R}^2$,
\begin{equation}\label{ko105}
\begin{split}
&\omega_1(\rho):=\omega(\alpha_1+\beta_1(\rho))\beta'_1(\rho),\qquad\,\,\,\,\,\omega_2(\rho):=\omega(\alpha_2+\beta_2(\rho))\beta'_2(\rho),\\
&W_1(\rho):=\widetilde{R}^{-1}(v^1(\alpha_1+\beta_1(\rho))),\qquad W_2(\rho):=\widetilde{R}^{-1}(v^1(\alpha_2+\beta_2(\rho))).
\end{split}
\end{equation}
Using also the formula $v^1=BR(z,\omega)+\frac{\omega}{2}\frac{z_\alpha}{|z_\alpha|^2}$, see \eqref{ko1}, we derive our main formula
\begin{equation}\label{ko106}
\begin{split}
W_n(\mu)&=\frac{\omega_n(\mu)(1,f'_n(\mu))}{2(1+|f'_n(\mu)|^2)}+\widetilde{R}^{-1}\big(S_t^\infty(z(\alpha_n+\beta_n(\mu)))\big)\\
&+\sum_{m=1}^2\frac{1}{2\pi}p.v.\int_{\mathbb{R}}\frac{\big(-f_n(\mu)+f_m(\rho),\mu-\rho\big)}{|\mu-\rho|^2+|f_n(\mu)-f_m(\rho)|^2}\omega_m(\rho)\psi_{\varep_5^{10}}(\beta_m(\rho))\,d\rho,
\end{split}
\end{equation}
for $n\in\{1,2\}$. Notice that, in view of \eqref{u1smooth} and Lemma \ref{vort},
\begin{equation}\label{ko107}
\|W_1\|_{C^3(-\varep_5^6,\varep_5^6)}+\|W_2\|_{C^3(-\varep_5^6,\varep_5^6)}+\|\omega_1\|_{L^\infty(-\varep_5^6,\varep_5^6)}+\|\omega_2\|_{L^\infty(-\varep_5^6,\varep_5^6)}\lesssim 1.
\end{equation}

Our plan is to use these bounds and the formulas \eqref{ko106} to show that $\big|[W_1(0)]_2-[W_2(0)]_2\big|\lesssim d\log (1/d)$, where $d=|z(\alpha_2,t)-z(\alpha_1,t)|$. This is a consequence of the main technical result, Proposition \ref{TechProp} below. Given this bound, we will then be able to use Gronwall's inequality to complete the proof of the theorem in section \ref{MainThm2}.

\section{The main technical proposition}\label{MainProp}

Assume $\delta\in(0,10^{-2}]$, $A\geq 1$, and $\eps\in[0,\delta^2/100]$. Assume $f,g:(-\delta,\delta)\to\mathbb{R}$ are $C^5$ functions satisfying
\begin{equation}\label{pi1}
\begin{split}
&\|f\|_{C^3}+\|g\|_{C^3}\leq A,\qquad f(0)=f'(0)=g(0)=g'(0)=0,\\
&f(\alpha)+g(\beta)\geq -\eps\qquad\text{ if }\qquad\alpha,\beta\in(-4\sqrt\eps,4\sqrt\eps).
\end{split}
\end{equation}
For any $r>0$ let $I_r:=(-r,r)$. As before, assume $\psi:\mathbb{R}\to[0,1]$ is a smooth even function supported in the interval $[-1,1]$ and equal to $1$ in the interval $[-1/2,1/2]$, and let $\psi_r(\alpha):=\psi(\alpha/r)$.  Given $\omega\in L^2(I_\delta)$ we define, for $\alpha\in[-\sqrt\eps,\sqrt\eps]$,
\begin{equation}\label{pi7}
\begin{split}
&T_{1,f}\omega(\alpha):=\frac{1}{2\pi}p.v.\int_{\mathbb{R}}\frac{f(\alpha)-f(\beta)}{(\alpha-\beta)^2+(f(\alpha)-f(\beta))^2}\omega(\beta)\psi_{\sqrt{\eps}}(\alpha-\beta)\,d\beta,\\
&T_{2,f}\omega(\alpha):=\frac{1}{2\pi}p.v.\int_{\mathbb{R}}\frac{\alpha-\beta}{(\alpha-\beta)^2+(f(\alpha)-f(\beta))^2}\omega(\beta)\psi_{\sqrt{\eps}}(\alpha-\beta)\,d\beta,\\
&T_{3,(f,g)}\omega(\alpha):=\frac{1}{2\pi}\int_{\mathbb{R}}\frac{2\eps+f(\alpha)+g(\beta)}{(\alpha-\beta)^2+(2\eps+f(\alpha)+g(\beta))^2}\omega(\beta)\psi_{\sqrt{\eps}}(\alpha-\beta)\,d\beta,\\
&T_{4,(f,g)}\omega(\alpha):=\frac{1}{2\pi}\int_{\mathbb{R}}\frac{\alpha-\beta}{(\alpha-\beta)^2+(2\eps+f(\alpha)+g(\beta))^2}\omega(\beta)\psi_{\sqrt{\eps}}(\alpha-\beta)\,d\beta,
\end{split}
\end{equation}
and
\begin{equation}\label{pi7.5}
\begin{split}
&H\omega(\alpha):=\frac{1}{2\pi}\int_{\mathbb{R}}\frac{\omega(\beta)}{\alpha-\beta}\frac{(2\eps+f(\alpha)+g(\alpha))^2}{(\alpha-\beta)^2+(2\eps+f(\alpha)+g(\alpha))^2}\psi_{\sqrt{\eps}/2^6}(\alpha)\psi_{\sqrt{\eps}/2^6}(\alpha-\beta)\,d\beta,\\
&M\omega(\alpha):=\frac{1}{2\pi}\int_{\mathbb{R}}\omega(\beta)\frac{2\eps+f(\alpha)+g(\alpha)}{(\alpha-\beta)^2+(2\eps+f(\alpha)+g(\alpha))^2}\psi_{\sqrt{\eps}/2^6}(\alpha)\psi_{\sqrt{\eps}/2^6}(\alpha-\beta)\,d\beta.
\end{split}
\end{equation}
\medskip

The following is our main definition.

\begin{definition}\label{MainDef}
Let
\begin{equation}\label{pi2}
\mathcal{C}_2:=\Big\{\phi\in C^1_0(I_\delta):|\phi^{(k)}(\alpha)|\leq \frac{\eps}{\eps^{2+k}+|\alpha|^{2+k}}\text{ for any }\alpha\in I_{\delta}\text{ and }k\in\{0,1\}\Big\}.
\end{equation}
For any $h\in L^2(I_\delta)$ let
\begin{equation}\label{pi3}
\|h\|_Z:=\sup_{\phi\in\mathcal{C}_2}\Big|\int_{\mathbb{R}}h(\alpha)H^\ast\phi(\alpha)\,d\alpha\Big|,
\end{equation}
where $H^\ast$ is the dual of the operator $H$ defined in \eqref{pi7.5}.
\end{definition}

We start with a simple lemma:

\begin{lemma}\label{lemma1}
(i) For any $h\in L^2(I_\delta)$ we have
\begin{equation}\label{pi10}
\|h\|_Z\lesssim \eps\|h\|_{C^1(I_{\sqrt{\eps}/8})},
\end{equation}
and
\begin{equation}\label{pi11}
\|h\|_Z\lesssim \int_{I_{\sqrt\eps/8}}|h(\beta)|\frac{\eps^2}{\eps^3+|\beta|^3}\,d\beta.
\end{equation}

(ii) There is a constant $A_1=A_1(A,\delta)$ such that 
\begin{equation}\label{pi12}
A_1^{-1}M^\ast\phi\in \mathcal{C}_2 \qquad\text{ for any }\phi\in \mathcal{C}_2.
\end{equation}
Moreover, for any $h\in L^2(I_\delta)$,
\begin{equation}\label{pi13}
\|Mh\|_{L^\infty(I_\delta)}\lesssim \|h\|_{L^\infty(I_{\sqrt\eps})}.
\end{equation}
\end{lemma}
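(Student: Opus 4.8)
I would prove the two parts of Lemma \ref{lemma1} essentially independently, starting from the definitions \eqref{pi2}, \eqref{pi3}, and the explicit kernels in \eqref{pi7.5}.

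For part (i), the key observation is that the operator $H$ has a kernel supported where both $|\alpha|\lesssim\sqrt\eps$ and $|\alpha-\beta|\lesssim\sqrt\eps$, so by duality the dual operator $H^\ast$ acting on $\phi\in\mathcal{C}_2$ produces a function supported in (a fixed dilate of) $I_{\sqrt\eps}$, and more precisely in $I_{\sqrt\eps/8}$ after accounting for the cutoffs $\psi_{\sqrt\eps/2^6}$. I would first record the pointwise bound $|H^\ast\phi(\alpha)|\lesssim 1$ for $\phi\in\mathcal{C}_2$: this follows because, on the support of the cutoffs, the factor $(2\eps+f(\alpha)+g(\alpha))^2$ is $\lesssim\eps^2$ (using $\|f\|_{C^3}+\|g\|_{C^3}\le A$, $f(0)=f'(0)=0$, etc., so $|f(\alpha)+g(\alpha)|\lesssim \alpha^2\lesssim\eps$ there), the denominator is $\gtrsim(\alpha-\beta)^2+\eps^2$, and $|\phi(\beta)|\lesssim\eps^{-2}$; integrating $\frac{\eps^2}{|\alpha-\beta|\,((\alpha-\beta)^2+\eps^2)}$ against $\eps^{-2}$ over $|\alpha-\beta|\lesssim\sqrt\eps$ gives $O(1)$ (the $1/(\alpha-\beta)$ is a principal value, but the even part cancels and the remaining integral is logarithmically bounded in $\sqrt\eps/\eps$, hence $O(\log(1/\eps))$ — I'd need to be slightly careful and may get $O(\log 1/\eps)$ rather than $O(1)$, which is still fine since we only want $\|h\|_Z\lesssim\eps\|h\|_{C^1(I_{\sqrt\eps/8})}$ up to the implied constant; alternatively one exploits the extra $\eps^2$ against the singularity to recover $O(1)$). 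Then \eqref{pi10} follows from $\|h\|_Z=\sup_\phi|\int h\,H^\ast\phi|\le \|H^\ast\phi\|_{L^1}\|h\|_{L^\infty(\mathrm{supp}\,H^\ast\phi)}$ and $\|H^\ast\phi\|_{L^1(I_{\sqrt\eps/8})}\lesssim\eps$ (length $\sqrt\eps$ times the sharper pointwise bound, or directly by Fubini on the kernel). For \eqref{pi11} I would instead pair $h$ directly with the kernel: $\int h(\alpha)H^\ast\phi(\alpha)d\alpha=\int\int H(\alpha,\beta)h(\alpha)\phi(\beta)\,d\alpha\,d\beta$ up to harmless relabeling, bound $|\phi(\beta)|\le \eps/(\eps^2+\beta^2)$, and carry out the $\alpha$-integral first, using $\int \frac{|h(\alpha)|(2\eps+f+g)^2}{|\alpha-\beta|((\alpha-\beta)^2+(2\eps+f+g)^2)}\,d\alpha$; the worst case is $\beta$ near $0$, and keeping track of the weights produces the claimed $\int|h(\beta)|\frac{\eps^2}{\eps^3+|\beta|^3}\,d\beta$ bound after optimizing.

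For part (ii), the inclusion $A_1^{-1}M^\ast\phi\in\mathcal{C}_2$ is a direct differentiation-under-the-integral estimate: one must show $|(M^\ast\phi)^{(k)}(\alpha)|\lesssim\eps/(\eps^{2+k}+|\alpha|^{2+k})$ for $k\in\{0,1\}$. Writing out $M^\ast$, its kernel is $\frac{2\eps+f(\beta)+g(\beta)}{(\alpha-\beta)^2+(2\eps+f(\beta)+g(\beta))^2}\psi_{\sqrt\eps/2^6}(\beta)\psi_{\sqrt\eps/2^6}(\alpha-\beta)$ (modulo which variable the non-principal-value smoothing acts on — I would state this carefully), and pairing against $\phi\in\mathcal C_2$ with $|\phi^{(k)}|\le\eps/(\eps^{2+k}+|\cdot|^{2+k})$, one estimates the convolution-type integral. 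The support constraint $|\beta|\lesssim\sqrt\eps$, $|\alpha-\beta|\lesssim\sqrt\eps$ again forces $|\alpha|\lesssim\sqrt\eps$, so for $|\alpha|\gtrsim\sqrt\eps$ the left side vanishes and the required bound is trivial; for $|\alpha|\lesssim\sqrt\eps$ one just needs $|(M^\ast\phi)^{(k)}(\alpha)|\lesssim\eps^{-1-k}$, which comes out of the size of the kernel ($\lesssim\eps/((\alpha-\beta)^2+\eps^2)$, so $L^1_\beta$ norm $\lesssim 1/\sqrt\eps$... one needs to combine this with the decay of $\phi$ to get the sharp power; when $\alpha$ is comparatively large within $I_{\sqrt\eps}$, the factor $1/((\alpha-\beta)^2+\eps^2)$ is where the gain lives). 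Differentiating in $\alpha$ hits either the Poisson-type kernel (gaining a factor comparable to $1/|\alpha-\beta|$, controlled since we can move the derivative onto $\phi$ by an integration by parts, or directly estimate) or the cutoff $\psi_{\sqrt\eps/2^6}(\alpha-\beta)$ (gaining $1/\sqrt\eps$, acceptable). Finally \eqref{pi13}: $\|Mh\|_{L^\infty}\lesssim\|h\|_{L^\infty(I_{\sqrt\eps})}$ is immediate since $\mathrm{supp}$ of the kernel of $M$ in $\beta$ is inside $I_{\sqrt\eps}$ (from $\psi_{\sqrt\eps/2^6}(\alpha)\psi_{\sqrt\eps/2^6}(\alpha-\beta)$), and $\int\big|\frac{2\eps+f+g}{(\alpha-\beta)^2+(2\eps+f+g)^2}\big|\,d\beta\lesssim\int\frac{\eps}{(\alpha-\beta)^2+\eps^2}\,d\beta\lesssim 1$ by the standard Poisson-kernel estimate (using $|2\eps+f(\alpha)+g(\alpha)|\approx\eps$ on the relevant range, which holds because $|f(\alpha)+g(\alpha)|\lesssim\alpha^2\lesssim\eps$ there and the leading term is $2\eps>0$).

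The main obstacle I anticipate is purely bookkeeping: tracking on which variable each cutoff $\psi_{\sqrt\eps/2^6}$ sits after passing to the dual operators $H^\ast$ and $M^\ast$, and making sure the nominal singularity $1/(\alpha-\beta)$ inside $H$ is genuinely tamed by the $(2\eps+f+g)^2$ numerator so that one does not lose a logarithm where the statement claims a clean power of $\eps$. Everything reduces to one-dimensional Poisson-kernel / Hilbert-transform estimates with the weight $\eps/(\eps^2+|\cdot|^2)$, and the geometric input $|f(\alpha)+g(\alpha)|\lesssim\alpha^2$ from \eqref{pi1} that makes $2\eps+f+g\approx\eps$ on $I_{\sqrt\eps}$; there is no conceptual difficulty, only the need to be careful with the principal value in \eqref{pi11}.
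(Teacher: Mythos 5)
There is a genuine gap in your treatment of part (i), and it is the heart of the lemma. Your route to \eqref{pi10} is H\"older: $|\int h\,H^\ast\phi|\le \|H^\ast\phi\|_{L^1}\|h\|_{L^\infty}$ together with the claim $\|H^\ast\phi\|_{L^1(I_{\sqrt\eps/8})}\lesssim\eps$. That claim is false: the sharp pointwise bound is $|H^\ast\phi(\beta)|\lesssim \eps^2/(\eps^3+|\beta|^3)$ (this is exactly \eqref{pi110} in the paper, and it is attained, e.g.\ by $\phi\in\mathcal{C}_2$ concentrated on $[\eps/2,2\eps]$ at height $\sim 1/\eps$, which makes $|H^\ast\phi(0)|\sim 1/\eps$), so $\|H^\ast\phi\|_{L^1}\approx 1$, not $\lesssim\eps$; and ``Fubini on the kernel'' with absolute values diverges at the diagonal, since for $|\alpha-\beta|\lesssim\eps$ the factor $(2\eps+f(\alpha)+g(\alpha))^2/[(\alpha-\beta)^2+(2\eps+f(\alpha)+g(\alpha))^2]$ is $\approx 1$ and the $1/(\alpha-\beta)$ singularity is untouched. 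Worse, your argument never uses the $C^1$ norm at all, and the $L^\infty$-only version of \eqref{pi10} is simply not true: taking $h$ to be (a mollification at a scale $\ll\eps$ of) $\mathrm{sign}(\beta)$, one checks $Hh(\alpha)\approx \mp c$ for $\pm\alpha\sim\eps$ (use $H1=0$ and compute the remaining one-sided integral), so pairing with the $\phi$ above gives $\|h\|_Z\gtrsim 1$ while $\|h\|_{L^\infty}=1$. The mechanism you are missing is the one the paper uses: since the non-singular factor in the kernel of $H$ in \eqref{pi7.5} depends only on $\alpha$, the kernel is odd in $\beta-\alpha$, so $H$ annihilates constants; one writes $Hh(\alpha)=\frac{1}{2\pi}\int\frac{h(\beta)-h(\alpha)}{\alpha-\beta}(\cdots)\,d\beta$, uses $|h(\beta)-h(\alpha)|\le\|h\|_{C^1}|\alpha-\beta|$ to kill the singularity, obtains $|Hh(\alpha)|\lesssim\eps\,\psi_{\sqrt\eps/2^6}(\alpha)\|h\|_{C^1(I_{\sqrt\eps/8})}$, and only then integrates against $\phi$ using $\int|\phi|\lesssim 1$. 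This is why the $C^1$ norm appears in \eqref{pi10}.

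The same issue propagates into your sketch of \eqref{pi11}: the pointwise bound \eqref{pi110} on $H^\ast\phi$ cannot be obtained by inserting $|\phi(\beta)|\le\eps/(\eps^2+\beta^2)$ and integrating absolute values (divergent at the diagonal); the paper splits at scale $\kappa\sim\eps+|\beta|$ and, on the near-diagonal piece, exploits the oddness of the kernel by subtracting $\phi(\beta)$, which is where the derivative bound $|\phi'|\le\eps/(\eps^3+|\cdot|^3)$ in the definition \eqref{pi2} of $\mathcal{C}_2$ and the Lipschitz control of $2\eps+f+g$ are used; your ``keep track of the weights and optimize'' skips precisely this step. Part (ii) is essentially fine in outline (and \eqref{pi13} is the same Poisson-kernel argument as the paper's), with one caveat: it does not suffice to prove $|(M^\ast\phi)^{(k)}|\lesssim\eps^{-1-k}$ on $I_{\sqrt\eps}$, since membership in $\mathcal{C}_2$ requires the decay $\eps/(\eps^{2+k}+|\beta|^{2+k})$ also for $\eps\ll|\beta|\lesssim\sqrt\eps$; for the derivative bound in that range the paper needs a genuine decomposition (its $M_1,M_2$ splitting at scale $|\beta|/10$), and your alternative of integrating by parts to move the derivative onto $\phi$ and onto $2\eps+f+g$ is plausible but must be carried out with those weights tracked.
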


We will prove this lemma in subsection \ref{subs3}.

We establish now various connections between the operators defined in \eqref{pi7} and \eqref{pi7.5} and the $Z$-norm.

\begin{lemma}\label{lemma2}
(i) For any $h\in L^\infty(I_\delta)$ we have
\begin{equation}\label{pi20}
\|T_{1,f}h\|_Z\lesssim \eps\log(1/\eps)\|h\|_{L^\infty(I_{2\sqrt\eps})},
\end{equation}
and
\begin{equation}\label{pi21}
\|(M-T_{3,(f,g)})h\|_Z+\|(M-T_{3,(g,f)})h\|_Z\lesssim \eps\log(1/\eps)\|h\|_{L^\infty(I_{2\sqrt\eps})}.
\end{equation}

(ii) For any $\phi\in\mathcal{C}_2$ supported in $I_{\sqrt\eps}$,
\begin{equation}\label{pi22}
\begin{split}
&\Big|\int_{I_{\sqrt\eps}}(H+T_{4,(g,f)}-T_{2,f})h(\alpha)\phi(\alpha)\,d\alpha\Big|\\
&+\Big|\int_{I_{\sqrt\eps}}(H+T_{4,(f,g)}-T_{2,g})h(\alpha)\phi(\alpha)\,d\alpha\Big|\lesssim \eps\log(1/\eps)\|h\|_{L^\infty(I_{2\sqrt\eps})}.
\end{split}
\end{equation}
\end{lemma}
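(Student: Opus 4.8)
The plan is to treat all the bounds in the same way: each one is a statement that a certain singular integral operator, which near the diagonal looks like a Hilbert-type transform, differs from its ``model'' version by an \emph{integrable} kernel of size roughly $\eps/(\eps^2+|\alpha-\beta|^2)$, and that applying such a kernel to a bounded function, then pairing against $H^\ast\phi$ (for \eqref{pi20}, \eqref{pi21}) or against a test function $\phi\in\mathcal{C}_2$ (for \eqref{pi22}), produces a quantity of size $\les\eps\log(1/\eps)\|h\|_{L^\infty(I_{2\sqrt\eps})}$. The recurring mechanism is: (i) Taylor-expand $f$ and $g$ around $0$ using $f(0)=f'(0)=g(0)=g'(0)=0$ and $\|f\|_{C^3}+\|g\|_{C^3}\le A$, so that $|f(\alpha)-f(\beta)|\les(|\alpha|+|\beta|)|\alpha-\beta|$ and $|f(\alpha)+g(\beta)|\les\alpha^2+\beta^2$ on the relevant scales; (ii) substitute these into the kernels in \eqref{pi7}--\eqref{pi7.5}; (iii) bound the resulting kernel pointwise by something of the form $C\eps/(\eps^2+|\alpha|^2)$ or $C/(|\alpha-\beta|+\sqrt\eps)$ times a harmless factor; (iv) integrate, with the cutoffs $\psi_{\sqrt\eps}$ restricting everything to scale $\sqrt\eps$, which is what converts a logarithmically-divergent integral into the factor $\log(1/\eps)$ and an algebraically convergent one into a power of $\sqrt\eps$.

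Concretely, for \eqref{pi20} I would write $T_{1,f}h(\alpha)=\frac1{2\pi}\mathrm{p.v.}\int \frac{f(\alpha)-f(\beta)}{(\alpha-\beta)^2+(f(\alpha)-f(\beta))^2}h(\beta)\psi_{\sqrt\eps}(\alpha-\beta)\,d\beta$ and note that the numerator is $O((|\alpha|+|\beta|)|\alpha-\beta|)$ while the denominator is $\ge(\alpha-\beta)^2$, so the kernel is bounded (no principal value is really needed) by $C(|\alpha|+|\beta|)/|\alpha-\beta|$; on the support $|\alpha-\beta|\le\sqrt\eps$ this is $\les\sqrt\eps\cdot|\alpha-\beta|^{-1}$, which after integration in $\beta$ over an interval of length $\sqrt\eps$ gives $T_{1,f}h(\alpha)=O(\sqrt\eps\log(1/\eps)\|h\|_{L^\infty(I_{2\sqrt\eps})})$ pointwise on $I_{\sqrt\eps/8}$; then \eqref{pi10} of Lemma \ref{lemma1} (the bound $\|\cdot\|_Z\les\eps\|\cdot\|_{C^1(I_{\sqrt\eps/8})}$) — or rather a slight variant of \eqref{pi11} that only needs an $L^\infty$ bound on $h$ times the weight $\eps^2/(\eps^3+|\beta|^3)$ — upgrades this to the $Z$-norm bound, with the extra factor $\eps$ coming from the definition of $\mathcal{C}_2$. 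For \eqref{pi21} the difference $M-T_{3,(f,g)}$ has kernel equal to $\frac{2\eps+f(\alpha)+g(\alpha)}{(\alpha-\beta)^2+(2\eps+f(\alpha)+g(\alpha))^2}-\frac{2\eps+f(\alpha)+g(\beta)}{(\alpha-\beta)^2+(2\eps+f(\alpha)+g(\beta))^2}$ times cutoffs; the two numerators differ by $g(\alpha)-g(\beta)=O((|\alpha|+|\beta|)|\alpha-\beta|)$ and the two denominators differ by a comparable amount, so a single application of the mean value theorem in the variable $(2\eps+f(\alpha)+g(\cdot))$ shows the kernel difference is $\les\eps^{-1}(|\alpha|+|\beta|)|\alpha-\beta|/(\text{denominator})$, which on the relevant scales is again an integrable bump producing $\eps\log(1/\eps)$; the roles of $f,g$ are symmetric so $M-T_{3,(g,f)}$ is handled identically. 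For \eqref{pi22} I would similarly expand $H+T_{4,(g,f)}-T_{2,f}$: writing out the three kernels, $T_{2,f}$ has $\frac{\alpha-\beta}{(\alpha-\beta)^2+(f(\alpha)-f(\beta))^2}$, $T_{4,(g,f)}$ has $\frac{\alpha-\beta}{(\alpha-\beta)^2+(2\eps+g(\alpha)+f(\beta))^2}$, and $H$ supplies precisely the correction $\frac1{\alpha-\beta}\cdot\frac{(2\eps+f(\alpha)+g(\alpha))^2}{(\alpha-\beta)^2+(2\eps+f(\alpha)+g(\alpha))^2}$ (with the tighter cutoff $\psi_{\sqrt\eps/2^6}$); the point is that $T_{4,(g,f)}-T_{2,f}$, after clearing denominators, has numerator $(\alpha-\beta)[(f(\alpha)-f(\beta))^2-(2\eps+g(\alpha)+f(\beta))^2]$, and $(f(\alpha)-f(\beta))^2$ is genuinely negligible ($O((|\alpha|+|\beta|)^2(\alpha-\beta)^2)$) while $(2\eps+g(\alpha)+f(\beta))^2$ is, up to another negligible error coming from replacing $f(\beta)$ by $f(\alpha)$, exactly the $(2\eps+f(\alpha)+g(\alpha))^2$ appearing in $H$ — so the three operators nearly cancel and the residual kernel is integrable of the usual type; I then test against $\phi\in\mathcal{C}_2$, using $|\phi(\alpha)|\le\eps/(\eps^2+|\alpha|^2)$ and $\phi$ supported in $I_{\sqrt\eps}$, to extract the final $\eps\log(1/\eps)$.

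The genuinely delicate point — the main obstacle — is \eqref{pi22}: unlike \eqref{pi20}--\eqref{pi21}, the operators $H$, $T_{2,f}$, $T_{4,(g,f)}$ are each only conditionally convergent (odd kernels), so one cannot bound them individually and must track the cancellation. The safe way is to never split the combination $H+T_{4,(g,f)}-T_{2,f}$ before estimating: combine over a common denominator first, check that the odd (principal-value) part of the combined numerator is $O((\alpha-\beta)^3)$ or smaller near the diagonal so that the combined kernel is in fact absolutely integrable, and only then estimate pointwise. One must also be careful about the mismatch of cutoff scales ($\psi_{\sqrt\eps}$ in $T_2,T_4$ versus $\psi_{\sqrt\eps/2^6}$ in $H$ and the extra $\psi_{\sqrt\eps/2^6}(\alpha)$ factor): on the annulus where the cutoffs disagree the kernel of $T_2-T_4$ is already integrable (no $H$ needed there) because $\alpha$ is bounded below by $\approx\sqrt\eps$, so the contribution is handled by the crude estimate; and the factor $\psi_{\sqrt\eps/2^6}(\alpha)$ only matters for $|\alpha|\approx\sqrt\eps$ where it again costs nothing. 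Modulo this bookkeeping, every estimate reduces to the single calculation $\int_{|\beta|\le\sqrt\eps}\frac{\eps}{\eps^2+|\alpha|^2}\cdot\frac{(|\alpha|+|\beta|)}{|\alpha-\beta|+\sqrt\eps}\,d\beta\,d\alpha\les\eps\log(1/\eps)$, which is elementary.
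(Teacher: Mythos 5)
Your plan for part (i) has a genuine gap, and it occurs exactly where the lemma is hard. First, the claim that ``no principal value is really needed'' for $T_{1,f}$ is false: your pointwise kernel bound $|f(\alpha)-f(\beta)|/[(\alpha-\beta)^2+(f(\alpha)-f(\beta))^2]\lesssim (|\alpha|+|\beta|)/|\alpha-\beta|$ is \emph{not} absolutely integrable across the diagonal (the factor $|\alpha|+|\beta|$ does not vanish as $\beta\to\alpha$), so the asserted pointwise estimate $|T_{1,f}h(\alpha)|\lesssim\sqrt\eps\log(1/\eps)\|h\|_{L^\infty}$ does not follow; for merely bounded $h$ the singular part is a truncated Hilbert transform with coefficient $f'(\beta)/(1+f'(\beta)^2)$ and is not even uniformly bounded pointwise. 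Second, even granting such an $L^\infty$ bound, your route to the $Z$-norm does not close: \eqref{pi10} needs a $C^1$ bound (which $T_{1,f}h$ does not have for $h\in L^\infty$), and \eqref{pi11} applied to an $L^\infty$ bound of size $\sqrt\eps\log(1/\eps)$ only gives $\|T_{1,f}h\|_Z\lesssim\sqrt\eps\log(1/\eps)$, since $\int\eps^2/(\eps^3+|\beta|^3)\,d\beta\approx 1$; there is no ``extra factor $\eps$ from $\mathcal{C}_2$'' beyond what \eqref{pi10}--\eqref{pi11} already encode. The same accounting problem kills your argument for \eqref{pi21}: the mean-value-theorem bound $\lesssim(|\alpha|+|\beta|)|\alpha-\beta|/[(\alpha-\beta)^2+\eps^2]$ yields at best $|(M-T_{3,(f,g)})h(\alpha)|\lesssim \sqrt\eps+|\alpha|\log(1/\eps)$, which through \eqref{pi11} gives only $O(\sqrt\eps)$, not $O(\eps\log(1/\eps))$. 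The paper's proof of (i) is structured precisely to avoid this loss: it splits the kernel of $T_{1,f}$ into a bounded, $C^1$ kernel $K$ (handled by \eqref{pi10}, cost $O(\eps)$) plus the exact Hilbert-type part $f'(\beta)/[(\alpha-\beta)(1+f'(\beta)^2)]$, and then performs a double dyadic decomposition in $\alpha$ and $\beta$ at scales $2^k\in[\eps,\sqrt\eps]$, using the $L^2$-boundedness of the Hilbert transform, the smallness $|f'(\beta)|\lesssim|\beta|\approx 2^{k_1}$, and the weight in \eqref{pi11} (resp.\ the $C^1$ bound \eqref{pi10} for pieces with $|\alpha|$ much smaller than the distance to the support of $h_{k_1}$); the $\log(1/\eps)$ arises from summing the $\sim\log(1/\eps)$ scales. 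An analogous decomposition, based on the explicit difference kernel \eqref{pi137.5}, is what proves \eqref{pi21}. Without some substitute for this multiscale/$L^2$ mechanism, your estimates are short of the required bound by a factor of order $\eps^{-1/2}$.

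For part (ii) your outline is much closer to the paper: the paper also exhibits the cancellation by writing $-T_{4,(g,f)}+T_{2,f}=J^1+J^2$, replacing $J^2$ by a model $J^3$ with denominator $[1+f'(\alpha)^2][(\alpha-\beta)^2+X(\alpha,\alpha)^2]$, and showing $|J^1|+|J^2-J^3|\lesssim|\alpha|+\eps$, which paired with $|\phi|\le\eps/(\eps^2+\alpha^2)$ gives $\eps\log(1/\eps)$. But note that the remaining piece $\int(Hh-J^3)\phi$ is \emph{not} handled by a pointwise kernel bound (both terms are conditionally convergent); the paper passes to the adjoint side and shows that the function $\Phi$ obtained by applying the difference of the two odd kernels to $\phi$ satisfies $|\Phi(\beta)|\lesssim\sqrt\eps\mathbf{1}_{I_{2\sqrt\eps}}(\beta)$, so that pairing with $h\in L^\infty$ costs $\sqrt\eps\cdot\sqrt\eps=\eps$. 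Your proposal gestures at ``combine over a common denominator and check the odd part is harmless'' but does not identify this duality step, which is where the conditional convergence is actually resolved; you would need to supply it (or an equivalent cancellation argument) to make part (ii) complete.
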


We will prove this lemma in subsection \ref{subs4}.

The main result in this section is the following proposition:

\begin{proposition}\label{TechProp}
Assume that $F^+,F^-,G,\omega^+,\omega^-,E^+,E^-,E\in L^2(I_\delta)$ satisfy the estimates
\begin{equation}\label{pi25}
\begin{split}
&\|F^+\|_{C^1(I_{2\sqrt\eps})}+\|F^-\|_{C^1(I_{2\sqrt\eps})}+\|G\|_{C^1(I_{2\sqrt\eps})}+\|\omega^+\|_{L^\infty(I_{2\sqrt\eps})}+\|\omega^-\|_{L^\infty(I_{2\sqrt\eps})}\lesssim 1,\\
&\|E^+\|_{Z}+\|E^-\|_Z+\sup_{\phi\in\mathcal{C}_2,\,\mathrm{supp}\,\phi\subseteq I_{\sqrt\eps/8}}\Big|\int_{\mathbb{R}}E\cdot\phi\,d\alpha\Big|\lesssim\eps\log(1/\eps).
\end{split}
\end{equation}
Moreover, assume that the following identities hold in $I_{\sqrt\eps}$:
\begin{equation}\label{pi26}
\begin{split}
F^+&=T_{1,f}\omega^+-T_{3,(f,g)}\omega^--(1/2)\omega^++E^+,\\
F^-&=-T_{3,(g,f)}\omega^++T_{1,g}\omega^-+(1/2)\omega^-+E^-,\\
G&=(T_{4,(g,f)}-T_{2,f})\omega^++(T_{4,(f,g)}-T_{2,g})\omega^-+E.
\end{split}
\end{equation}
Then
\begin{equation}\label{pi27}
|G(0)|\lesssim \eps\log(1/\eps).
\end{equation}
\end{proposition}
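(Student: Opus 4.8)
\emph{Overview and reduction.} The plan is to first reduce \eqref{pi27} to an estimate for $\|\omega^+\|_Z+\|\omega^-\|_Z$, and then to obtain the latter by reading the first two identities of \eqref{pi26} as a coupled ``elliptic'' system for $(\omega^+,\omega^-)$. For the reduction, fix a nonnegative $\phi_0$ that, up to a fixed multiplicative constant, lies in $\mathcal{C}_2$, is supported in $I_{\sqrt\eps/8}$, and satisfies $\int_{\mathbb{R}}\phi_0\,d\alpha=1+O(\sqrt\eps)$ and $\int_{\mathbb{R}}|\alpha|\,\phi_0(\alpha)\,d\alpha\lesssim\eps\log(1/\eps)$ --- for instance a smooth truncation to $I_{\sqrt\eps/8}$ of $\tfrac1\pi\eps(\eps^2+\alpha^2)^{-1}$. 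Since $\|G\|_{C^1(I_{2\sqrt\eps})}\lesssim 1$ and $\int\phi_0\approx 1$, writing $G(\alpha)=G(0)+(G(\alpha)-G(0))$ gives $|G(0)|\lesssim\big|\int_{\mathbb{R}}G\,\phi_0\,d\alpha\big|+\eps\log(1/\eps)$. Now substitute the third identity of \eqref{pi26}, apply Lemma \ref{lemma2}(ii) with $\phi=\phi_0$ to replace $(T_{4,(g,f)}-T_{2,f})\omega^+$ by $-H\omega^+$ and $(T_{4,(f,g)}-T_{2,g})\omega^-$ by $-H\omega^-$ up to errors $\lesssim\eps\log(1/\eps)$, and use the last bound in \eqref{pi25} (with $\phi_0$ supported in $I_{\sqrt\eps/8}$) for $\int E\,\phi_0$. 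Since $\int_{\mathbb{R}}\omega^\pm\,H^\ast\phi_0\,d\alpha$ is, by the definition of $\|\cdot\|_Z$, bounded by a constant times $\|\omega^\pm\|_Z$, this yields $|G(0)|\lesssim\|\omega^+\|_Z+\|\omega^-\|_Z+\eps\log(1/\eps)$, so it remains to prove $\|\omega^+\|_Z+\|\omega^-\|_Z\lesssim\eps\log(1/\eps)$.

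\emph{The elliptic system.} By Lemma \ref{lemma1}(i), $\|F^+\|_Z+\|F^-\|_Z\lesssim\eps$, and by \eqref{pi25}, $\|E^+\|_Z+\|E^-\|_Z\lesssim\eps\log(1/\eps)$. The $Z$-norm of a function depends only on its restriction to a fixed small neighborhood of $0$ inside $I_{\sqrt\eps}$ (because $H^\ast\phi$ is supported there for every $\phi\in\mathcal{C}_2$), where the identities \eqref{pi26} hold; so I may use Lemma \ref{lemma2}(i) to discard $T_{1,f}\omega^+,\ T_{1,g}\omega^-$ and to replace $T_{3,(f,g)}\omega^-,\ T_{3,(g,f)}\omega^+$ by $M\omega^-,\ M\omega^+$ respectively, all modulo $Z$-errors $\lesssim\eps\log(1/\eps)(\|\omega^+\|_{L^\infty(I_{2\sqrt\eps})}+\|\omega^-\|_{L^\infty(I_{2\sqrt\eps})})\lesssim\eps\log(1/\eps)$. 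The first two lines of \eqref{pi26} thus become
\[
(1/2)\omega^+=-M\omega^-+R^+,\qquad (1/2)\omega^-=M\omega^++R^-,\qquad \|R^+\|_Z+\|R^-\|_Z\lesssim\eps\log(1/\eps),
\]
or, setting $\omega:=\omega^++i\omega^-$ and $R:=R^++iR^-$, $(1/2-iM)\omega=R$ with $\|R\|_Z\lesssim\eps\log(1/\eps)$ and $\|\omega\|_{L^\infty(I_{2\sqrt\eps})}\lesssim 1$.

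\emph{Inverting $1/2-iM$ --- the main obstacle.} It remains to invert $1/2-iM$ in the $Z$-norm. The key structural fact is that, up to the cutoffs and up to a skew-symmetric remainder of small operator norm, $M$ equals $\tfrac12$ times convolution by a Poisson kernel of height $d(\alpha)=2\eps+f(\alpha)+g(\alpha)\approx 2\eps$; its kernel is nonnegative, which makes $1/2-iM$ coercive, $\mathrm{Re}\,\langle(1/2-iM)u,u\rangle\gtrsim\|u\|_{L^2}^2$. The point of the argument is to transfer this Hilbert-space coercivity to the (one-sided, much weaker) $Z$-norm: testing $(1/2-iM)\omega=R$ against $H^\ast\phi$, $\phi\in\mathcal{C}_2$, produces the term $\langle\omega,M^\ast H^\ast\phi\rangle$, which one controls by $\|\omega\|_Z$ (with the favorable constant carried by the mass $\tfrac12$ of $M$) using that $M$ and $H$ essentially commute --- the commutator contributing only an error $\lesssim\eps\log(1/\eps)\|\omega\|_{L^\infty(I_{2\sqrt\eps})}$ --- together with Lemma \ref{lemma1}(ii), which says exactly that $M^\ast$ sends $\mathcal{C}_2$ into a bounded multiple of itself. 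Feeding this back, with $\|R\|_Z\lesssim\eps\log(1/\eps)$ and $\|MR\|_Z\lesssim\|R\|_Z$, closes the estimate and gives $\|\omega\|_Z\lesssim\eps\log(1/\eps)$, hence $\|\omega^\pm\|_Z\lesssim\eps\log(1/\eps)$ and \eqref{pi27}. The genuinely delicate part is this last step: reconciling the coercivity of $1/2-iM$ with the weak norm in which the conclusion is needed, and keeping every commutator and truncation error $\lesssim\eps\log(1/\eps)$; the first two steps are bookkeeping built on Lemmas \ref{lemma1} and \ref{lemma2}.
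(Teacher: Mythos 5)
Your reduction of $|G(0)|$ to a bound on $\|\omega^+\|_Z+\|\omega^-\|_Z$, and your derivation of the approximate system $\tfrac12\omega^++M\omega^-=R^+$, $\tfrac12\omega^--M\omega^+=R^-$ with $\|R^\pm\|_Z\lesssim\eps\log(1/\eps)$, are sound and parallel the paper. The gap is the final step: the inversion of $\tfrac12-iM$ in the $Z$-norm is asserted, not proved, and the sketch you give does not close. Testing $(\tfrac12-iM)\omega=R$ against $H^\ast\phi$ and using Lemma \ref{lemma3} together with Lemma \ref{lemma1}(ii) yields only $\tfrac12\|\omega\|_Z\leq A_1\|\omega\|_Z+C\eps\log(1/\eps)$, which is vacuous unless $A_1<\tfrac12$; but Lemma \ref{lemma1}(ii) gives $A_1=A_1(A,\delta)$ with no smallness, and no sharp constant is available: even at the level of envelopes, $M^\ast$ sends the profile $\eps/(\eps^2+\alpha^2)$ to (roughly) half a Poisson kernel at height $\approx 3\eps$ convolved with it, which lies only in a multiple $>1$ of $\mathcal{C}_2$, and the derivative constraint in \eqref{pi2} costs further constants. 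So there is no contraction/Neumann series, and $L^2$ (or $L^\infty$) coercivity of $\tfrac12-iM$ does not transfer to the weak dual-type $Z$ topology — transferring it is precisely the hard point, and "the favorable constant carried by the mass $\tfrac12$ of $M$" is exactly what is not justified. Note also that the intermediate claim $\|\omega^\pm\|_Z\lesssim\eps\log(1/\eps)$ is strictly stronger than what \eqref{pi27} requires and is never established in the paper.

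The paper's proof avoids any inversion. From the two $Z$-estimates it forms $\|(I+4M^2)\omega^\pm\|_Z\lesssim\eps\log(1/\eps)$ (using only $\|M\|_{Z\to Z}\lesssim1$ from \eqref{pi12}), commutes $H$ past $M^2$ with Lemma \ref{lemma3} to obtain $\big|\int H\omega^\pm\,[I+4(M^\ast)^2]\phi\,d\alpha\big|\lesssim\eps\log(1/\eps)$ for all $\phi\in\mathcal{C}_2$, then tests the third identity of \eqref{pi26} against $[I+4(M^\ast)^2]\phi$ (which lies in a bounded multiple of $\mathcal{C}_2$, so Lemma \ref{lemma2}(ii) and the hypothesis on $E$ apply) to get $\big|\int G\,[I+4(M^\ast)^2]\phi\,d\alpha\big|\lesssim\eps\log(1/\eps)$. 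Finally it extracts $G(0)$ using $|G(\alpha)-G(0)|\lesssim|\alpha|$ together with the pointwise positivity $(M^\ast)^2\phi_0\geq0$ for the single test function $\phi_0=(C\eps)^{-1}\psi_\eps$, which gives the lower bound $\int[I+4(M^\ast)^2]\phi_0\,d\alpha\gtrsim1$. In other words, positivity of the kernel of $M$, evaluated on one well-chosen test function, substitutes for operator invertibility in the weak norm. To repair your argument you would either have to prove a genuine $Z\to Z$ bound for $(\tfrac12-iM)^{-1}$ (not a consequence of the stated lemmas) or adopt this duality-plus-positivity scheme.
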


The proof of the proposition uses the two lemmas above, and the following commutator estimate:

\begin{lemma}\label{lemma3}
If $\phi\in \mathcal{C}_2$ then
\begin{equation*}
\|(H^\ast M^\ast-M^\ast H^\ast)\phi\|_{L^1}\lesssim \eps\log(1/\eps).
\end{equation*}
\end{lemma}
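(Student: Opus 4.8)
The plan is to transpose the commutator onto the forward operators and then exploit that, once the regularization parameter is frozen, $M$ and $H$ become commuting convolution operators. Write $\Phi(\alpha):=2\eps+f(\alpha)+g(\alpha)$; by \eqref{pi1} one has $\Phi(0)=2\eps$, $\Phi'(0)=0$, $\eps\le\Phi(\alpha)\lesssim\eps+\alpha^2$, $|\Phi'(\alpha)|\lesssim|\alpha|$ and $|\Phi''(\alpha)|\lesssim1$ on $I_{4\sqrt\eps}$, so in particular $\Phi\approx\eps$ and $|\Phi(\alpha)-2\eps|\lesssim\alpha^2$ on the support $I_{\sqrt\eps/2^6}$ of the cutoffs in \eqref{pi7.5}. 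The kernels of $H^\ast$ and $M^\ast$ are supported in $\{|\alpha|\lesssim\sqrt\eps,\ |\alpha-\beta|\lesssim\sqrt\eps\}$, so $(H^\ast M^\ast-M^\ast H^\ast)\phi$ is supported in $I_{\sqrt\eps}$; using the adjoint identity $\langle h,(H^\ast M^\ast-M^\ast H^\ast)\phi\rangle=\langle (MH-HM)h,\phi\rangle$ and $|\phi(\alpha)|\le\eps(\eps^2+\alpha^2)^{-1}$ (since $\phi\in\mathcal{C}_2$), the claim reduces to showing
\[
\int_{I_{\sqrt\eps}}\big|[M,H]h(\alpha)\big|\,\frac{\eps}{\eps^2+\alpha^2}\,d\alpha\ \lesssim\ \eps\log(1/\eps)\qquad\text{for every }h\text{ with }\|h\|_{L^\infty}\le1 .
\]

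Next I would introduce the frozen convolution operators $M_0 h:=k^M_0\ast h$ and $H_0 h:=k^H_0\ast h$ with $k^M_0(x)=\tfrac{1}{2\pi}\tfrac{2\eps}{x^2+4\eps^2}\psi_{\sqrt\eps/2^6}(x)$ (even) and $k^H_0(x)=\tfrac{1}{2\pi}\tfrac1x\tfrac{4\eps^2}{x^2+4\eps^2}\psi_{\sqrt\eps/2^6}(x)$ (odd, a smoothly truncated Hilbert kernel); being convolutions they commute, $[M_0,H_0]=0$, whence the telescoping identity
\[
[M,H]=(M-M_0)H+M_0(H-H_0)-(H-H_0)M-H_0(M-M_0),
\]
in which every term carries a single ``defect'' factor $M-M_0$ or $H-H_0$ flanked by operators bounded on $L^2$ (standard Calder\'on--Zygmund bounds for $H,H_0$, and \eqref{pi13} for $M,M_0$). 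The kernel of $H-H_0$ decomposes as (a) a $\Phi$-replacement piece $\tfrac{1}{2\pi}\tfrac{(\alpha-\beta)(\Phi(\alpha)^2-4\eps^2)}{((\alpha-\beta)^2+\Phi(\alpha)^2)((\alpha-\beta)^2+4\eps^2)}\psi_{\sqrt\eps/2^6}(\alpha)\psi_{\sqrt\eps/2^6}(\alpha-\beta)$ --- now non-singular (the $1/(\alpha-\beta)$ has cancelled) and carrying the small factor $|\Phi(\alpha)^2-4\eps^2|\lesssim\eps\,\alpha^2$ --- plus (b) a cutoff-mismatch piece equal to $[\psi_{\sqrt\eps/2^6}(\alpha)-1]\,(H_0h)(\alpha)$, supported in $|\alpha|\sim\sqrt\eps$; and similarly for $M-M_0$, with $|\Phi(\alpha)-2\eps|\lesssim\alpha^2$ in the numerator of the replacement piece.

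Then I would estimate the weighted $L^1$-norm of each resulting contribution. Two features make this work. For the $\Phi$-replacement pieces the smallness factor $|\Phi(\alpha)-2\eps|\lesssim\alpha^2$ is concentrated exactly where the weight $\eps(\eps^2+\alpha^2)^{-1}$ is large, so that after integrating in $\alpha$ (splitting at the scales $\eps$ and $\sqrt\eps$) this $\alpha^2$ gain precisely absorbs the logarithmic loss produced by the truncated Hilbert kernel $k^H_0$, leaving $\eps\log(1/\eps)$. For the cutoff-mismatch pieces the gain is the separation of scales: $\psi_{\sqrt\eps/2^6}(\alpha)-1$ (and, in the other defect, $\psi_{\sqrt\eps/2^6}(\alpha)-\psi_{\sqrt\eps/2^6}(\beta)$) is supported where a variable is of size $\sqrt\eps$, far --- by the ratio $\sqrt\eps/\eps$ --- from the scale $\eps$ on which $k^M_0,k^H_0$ concentrate, so that on the core $|\alpha|\lesssim\eps$ of $\phi$ the relevant kernel values are tiny, while on the shell $|\alpha|\sim\sqrt\eps$, where $|\phi(\alpha)|\lesssim1$ has $L^1$-mass only $\sqrt\eps$, one pairs against the $L^2\to L^2$ bounds of the operators together with the $L^2$-mass of $\phi$ on that shell. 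The Lemma \ref{lemma1} bounds ($M^\ast$ preserving $\mathcal{C}_2$, the $L^\infty$-bound on $M$) are used to make sense of, and control, the intermediate functions.

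The main obstacle is the $\Phi$-replacement estimate itself: a naive absolute-value bound loses, because $\int_{I_{\sqrt\eps}}|k^H_0(u)|\,|u|\,du\approx1$, not $\lesssim\eps\log(1/\eps)$. One must keep the oddness of $k^H_0$ in the difference variable and Taylor-expand both $\Phi$ and $h$ to first order around the evaluation point, so that the leading term is an odd kernel tested against a constant and vanishes in principal value, leaving terms with an extra factor $|u|$ and an extra small factor (either $|\Phi'|\lesssim|\alpha|$ or $|\psi_{\sqrt\eps/2^6}'|\lesssim\eps^{-1/2}$). Carrying out this Calder\'on-commutator-type cancellation with every principal value and cutoff tracked --- and checking that the difference between $H^\ast M^\ast-M^\ast H^\ast$ and $M_0^\ast H_0^\ast-H_0^\ast M_0^\ast=0$ reduces, term by term, to exactly integrals of the form just bounded --- is the bulk of the proof; the surviving $\log(1/\eps)$ is precisely the price of the marginal non-integrability of $k^H_0$ at the scale $\eps$.
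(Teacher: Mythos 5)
Your reduction by duality (testing $[M,H]h$ against $|\phi|\le\eps(\eps^2+\alpha^2)^{-1}$ for arbitrary $\|h\|_{L^\infty}\le 1$), the exact telescoping identity through the frozen operators $M_0,H_0$, and the treatment of the $\Phi$-replacement defect on the core $|\alpha|\lesssim\eps$ are all sound. The gap is in the intermediate region $\eps\ll|\alpha|\lesssim\sqrt\eps$. Freezing the coefficient at the single point $\alpha=0$ buys the smallness $|\Phi(\alpha)-2\eps|\lesssim\alpha^2$ only where $\alpha^2\ll\eps$; on a dyadic shell $|\alpha|\sim r$ with $\eps\ll r\le\sqrt\eps$ the defects $M-M_0$, $H-H_0$ are of relative size $\min(r^2/\eps,1)$ (and the cutoff mismatches turn on at $r\sim\sqrt\eps$), while the weight still carries mass $\approx\eps/r$ there. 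To reach $\eps\log(1/\eps)$ you need each term to be pointwise $\lesssim r$ on that shell (this is exactly what the factor $|\alpha|+|\beta|+\eps$ in the paper's kernel bound \eqref{pi152.5} encodes); instead your terms are of size $\min(r^2/\eps,1)\cdot O(1)$, which at $r\sim\sqrt\eps$ is $O(1)$, and your own proposed shell estimate ($\|T h\|_{L^2}\|\phi\|_{L^2(\mathrm{shell})}\lesssim\eps^{1/4}\cdot\eps^{1/4}$) and core estimate for the mismatch (the $M_0$-kernel at distance $\sim\sqrt\eps$ is $O(1)$, not tiny, so the far-to-core contribution is $\sqrt\eps\cdot\sup|H_0h|$) both top out at $O(\sqrt\eps)$, which is far larger than $\eps\log(1/\eps)$. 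This is not merely a lossy bound: choosing $h$ so that $Hh$ oscillates with unit amplitude at scale $\eps$ across the shell and $f+g\approx\alpha^2$, a single term such as $(M-M_0)Hh$ genuinely has weighted integral of order $\sqrt\eps$; only the sum of your four terms is small there. So the four-term decomposition, estimated term by term, cannot close: the commutator cancellation must be exploited \emph{locally at every point}, not just at $0$. That is what the paper's proof does --- it writes the commutator kernel as a $\rho$-integral of differences of $\widetilde{X}$ and of the cutoffs taken at separation $|\rho|$ or $|\alpha-\beta|$, each difference contributing the factor $|\rho|(|\alpha|+|\rho|+\eps)$ or $|\alpha-\beta|(|\alpha|+|\beta|+\eps)$, and uses the oddness of $\psi_\kappa(\rho)/\rho$ at the adapted scale $\kappa\sim\eps+|\alpha-\beta|$ for the one piece with no $\rho$-gain. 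A repaired version of your scheme would have to freeze at the center of each dyadic shell (or, equivalently, difference the coefficients at the evaluation point), at which stage you are essentially reproducing the paper's argument.

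A secondary point: once you have dualized, $h$ is only an $L^\infty$ function, so the step ``Taylor-expand both $\Phi$ and $h$'' is not available. This particular defect is repairable --- in terms like $H_0(M-M_0)h$ you should difference the kernel of $M-M_0$ in its output variable (using $|\Phi'|\lesssim|\alpha|$ and $|\psi_{\sqrt\eps/2^6}'|\lesssim\eps^{-1/2}$), never $h$ itself --- but as written it signals that the key cancellation step has not actually been carried out, and it is exactly at that step that the shell problem above appears.
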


We will prove this lemma in subsection \ref{subs5}. 

We show first how to use Lemmas \ref{lemma1}, \ref{lemma2}, and \ref{lemma3} to complete the proof of Proposition \ref{TechProp}.

\begin{proof}[Proof of Proposition \ref{TechProp}] Using Lemma \ref{lemma1} (i) and Lemma \ref{lemma2} (i), the first two equations in \eqref{pi26} together with the assumptions \eqref{pi25} show that
\begin{equation*}
\|\omega^++2M\omega^-\|_Z+\|-2M\omega^++\omega^-\|_Z\lesssim \eps\log(1/\eps).
\end{equation*}
It follows from \eqref{pi12} that $\|M\|_{Z\to Z}\lesssim 1$. Therefore, it follows from the estimate above that
\begin{equation*}
\|(I+4M^2)\omega^+\|_Z+\|(I+4M^2)\omega^-\|_Z\lesssim \eps\log(1/\eps).
\end{equation*}
Therefore, for any $\phi\in\mathcal{C}_2$,
\begin{equation*}
\Big|\int_{\mathbb{R}}(H\omega^++4HM^2\omega^+)\cdot\phi\,d\alpha\Big|+\Big|\int_{\mathbb{R}}(H\omega^-+4HM^2\omega^-)\cdot\phi\,d\alpha\Big|\lesssim \eps\log(1/\eps).
\end{equation*}
Moreover, using \eqref{pi12}, \eqref{pi13}, Lemma \ref{lemma3}, and the assumption $\|\omega^+\|_{L^\infty}\lesssim 1$,
\begin{equation*}
\begin{split}
\Big|\int_{\mathbb{R}}(HM^2\omega^+&-M^2H\omega^+)\cdot\phi\,d\alpha\Big|\lesssim \Big|\int_{\mathbb{R}}(HM-MH)M\omega^+\cdot\phi\,d\alpha\Big|+\Big|\int_{\mathbb{R}}M(HM-MH)\omega^+\cdot\phi\,d\alpha\Big|\\
&\lesssim \Big|\int_{\mathbb{R}}M\omega^+\cdot(M^\ast H^\ast-H^\ast M^\ast)\phi\,d\alpha\Big|+\Big|\int_{\mathbb{R}}\omega^+\cdot(M^\ast H^\ast-H^\ast M^\ast)M^\ast\phi\,d\alpha\Big|\\
&\lesssim \eps\log(1/\eps).
\end{split}
\end{equation*}
Similarly,
\begin{equation*}
\Big|\int_{\mathbb{R}}(HM^2\omega^--M^2H\omega^-)\cdot\phi\,d\alpha\Big|\lesssim \eps\log(1/\eps).
\end{equation*}
Therefore, the last three inequalities show that, for any $\phi\in\mathcal{C}_2$,
\begin{equation}\label{pi30}
\Big|\int_{\mathbb{R}}H\omega^+\cdot[I+4(M^\ast)^2]\phi\,d\alpha\Big|+\Big|\int_{\mathbb{R}}H\omega^-\cdot[I+4(M^\ast)^2]\phi\,d\alpha\Big|\lesssim \eps\log(1/\eps).
\end{equation}

We examine now the last equation in \eqref{pi26} and Lemma \ref{lemma2} (ii). It follows that
\begin{equation*}
\begin{split}
\Big|&\int_{\mathbb{R}}G\cdot[I+4(M^\ast)^2]\phi\,d\alpha\Big|\\
&\leq \Big|\int_{\mathbb{R}}[(T_{4,(g,f)}-T_{2,f})\omega^++(T_{4,(f,g)}-T_{2,g})\omega^-]\cdot[I+4(M^\ast)^2]\phi\,d\alpha\Big|+\Big|\int_{\mathbb{R}}E\cdot[I+4(M^\ast)^2]\phi\,d\alpha\Big|\\
&\lesssim \Big|\int_{\mathbb{R}}(H\omega^++H\omega^-)\cdot[I+4(M^\ast)^2]\phi\,d\alpha\Big|+\eps\log(1/\eps)\\
&\lesssim \eps\log(1/\eps),
\end{split}
\end{equation*}
for any $\phi\in\mathcal{C}_2$ supported in $I_{\sqrt\eps/8}$. Since $|G(\alpha)-G(0)|\lesssim|\alpha|$ it follows that
\begin{equation*}
|G(0)|\Big|\int_{\mathbb{R}}[I+4(M^\ast)^2]\phi\,d\alpha\Big|\lesssim\Big|\int_{\mathbb{R}}G\cdot[I+4(M^\ast)^2]\phi\,d\alpha\Big|+\int_{\mathbb{R}}|\alpha|\big|[I+4(M^\ast)^2]\phi\big|\,d\alpha\lesssim \eps\log(1/\eps),
\end{equation*}
for any $\phi\in\mathcal{C}_2$. We choose now $\phi_0:=(C\eps)^{-1}\psi_\eps\in\mathcal{C}_2$, for a constant $C$ sufficiently large and notice that $(M^\ast)^2\phi\geq 0$. Therefore
\begin{equation*}
\Big|\int_{\mathbb{R}}[I+4(M^\ast)^2]\phi_0\,d\alpha\gtrsim 1,
\end{equation*}
and the desired bound \eqref{pi27} follows.
\end{proof}

\subsection{Proof of Lemma \ref{lemma1}}\label{subs3} In this subsection we prove Lemma \ref{lemma1}. For \eqref{pi10} it suffices to prove that
\begin{equation*}
\Big|\int_{\mathbb{R}}Hh(\alpha)\cdot\phi(\alpha)\,d\alpha\Big|\lesssim\eps\|h\|_{C^1(I_{\sqrt{\eps}/8})},
\end{equation*}
provided that $\phi\in \mathcal{C}_2$. Since
\begin{equation*}
\begin{split}
\big|Hh(\alpha)\big|&=\frac{1}{2\pi}\Big|\int_{\mathbb{R}}\frac{h(\beta)-h(\alpha)}{\alpha-\beta}\frac{(2\eps+f(\alpha)+g(\alpha))^2}{(\alpha-\beta)^2+(2\eps+f(\alpha)+g(\alpha))^2}\psi_{\sqrt{\eps}/2^6}(\alpha)\psi_{\sqrt{\eps}/2^6}(\alpha-\beta)\,d\beta\Big|\\
&\lesssim \|h\|_{C^1(I_{\sqrt{\eps}/8})}(2\eps+f(\alpha)+g(\alpha))\psi_{\sqrt{\eps}/2^6}(\alpha)\\
&\lesssim \eps\psi_{\sqrt{\eps}/2^6}(\alpha)\|h\|_{C^1(I_{\sqrt{\eps}/8})},
\end{split}
\end{equation*}
we have
\begin{equation*}
\Big|\int_{\mathbb{R}}Hh(\alpha)\cdot\phi(\alpha)\,d\alpha\Big|\lesssim\eps\|h\|_{C^1(I_{\sqrt{\eps}/8})}\int_{\mathbb{R}}\psi_{\sqrt{\eps}/2^6}(\alpha)|\phi(\alpha)|\,d\alpha\lesssim \eps\|h\|_{C^1(I_{\sqrt{\eps}/8})},
\end{equation*}
as desired.
\medskip

For \eqref{pi11} it suffices to prove that
\begin{equation}\label{pi110}
|H^\ast\phi(\beta)|\lesssim\frac{\eps^2}{\eps^3+|\beta|^3},\qquad\beta\in I_\delta,
\end{equation}
for any $\phi\in\mathcal{C}_2$. To prove this we write
\begin{equation*}
\begin{split}
H^\ast\phi(\beta)&=\frac{1}{2\pi}\int_{\mathbb{R}}\frac{\phi(\beta+\rho)}{\rho}\frac{(2\eps+f(\beta+\rho)+g(\beta+\rho))^2}{\rho^2+(2\eps+f(\beta+\rho)+g(\beta+\rho))^2}\psi_{\sqrt{\eps}/2^6}(\beta+\rho)\psi_{\sqrt{\eps}/2^6}(\rho)\,d\rho\\
&=H^\ast_{\leq\kappa}\phi(\beta)+H^\ast_{\geq\kappa}\phi(\beta),
\end{split}
\end{equation*}
where, with $X(\mu,\nu):=2\eps+f(\mu)+g(\nu)$,
\begin{equation*}
\begin{split}
&H^\ast_{\leq\kappa}\phi(\beta)=\frac{1}{2\pi}\int_{\mathbb{R}}\frac{\phi(\beta+\rho)}{\rho}\frac{X(\beta+\rho,\beta+\rho)^2}{\rho^2+X(\beta+\rho,\beta+\rho)^2}\psi_{\sqrt{\eps}/2^6}(\beta+\rho)\psi_{\sqrt{\eps}/2^6}(\rho)\psi_\kappa(\rho)\,d\rho,\\
&H^\ast_{\geq\kappa}\phi(\beta)=\frac{1}{2\pi}\int_{\mathbb{R}}\frac{\phi(\beta+\rho)}{\rho}\frac{X(\beta+\rho,\beta+\rho)^2}{\rho^2+X(\beta+\rho,\beta+\rho)^2}\psi_{\sqrt{\eps}/2^6}(\beta+\rho)\psi_{\sqrt{\eps}/2^6}(\rho)(1-\psi_\kappa(\rho))\,d\rho.
\end{split}
\end{equation*}

In proving \eqref{pi110} we may assume $|\beta|\leq\sqrt\eps$. We set $\kappa:=(|\beta|+\eps)/10$ and estimate
\begin{equation}\label{pi111}
\begin{split}
|H^\ast_{\geq\kappa}\phi(\beta)|&\lesssim \int_{\mathbb{R}}\frac{|\phi(\beta+\rho)|}{|\rho|}\frac{\eps^2}{\rho^2}\psi_{\sqrt{\eps}/2^6}(\rho)(1-\psi_\kappa(\rho))\,d\rho\lesssim \int_{\mathbb{R}}\frac{\eps}{\eps^2+(\beta+\rho)^2}\frac{\eps^2}{\kappa^3+|\rho|^3}\,d\rho\lesssim\frac{\eps^2}{\kappa^3}.
\end{split}
\end{equation}
To estimate $|H^\ast_{\leq\kappa}\phi(\beta)|$ we notice that, as a consequence of \eqref{pi1},
\begin{equation*}
\Big|\frac{X(\beta+\rho,\beta+\rho)^2}{\rho^2+X(\beta+\rho,\beta+\rho)^2}\psi_{\sqrt{\eps}/2^6}(\beta+\rho)-\frac{X(\beta,\beta)^2}{\rho^2+X(\beta,\beta)^2}\psi_{\sqrt{\eps}/2^6}(\beta)\Big|\lesssim \frac{\eps|\rho|(\eps+|\beta|)}{\rho^2+\eps^2},
\end{equation*}
if $|\beta|\leq \sqrt\eps$ and $|\rho|\leq\kappa$. Therefore
\begin{equation*}
|H^\ast_{\leq\kappa}\phi(\beta)|\lesssim I+II,
\end{equation*}
where
\begin{equation*}
\begin{split}
I:&=\Big|\int_{\mathbb{R}}\frac{\phi(\beta+\rho)}{\rho}\frac{(2\eps+f(\beta)+g(\beta))^2}{\rho^2+(2\eps+f(\beta)+g(\beta))^2}\psi_{\sqrt{\eps}/2^6}(\beta)\psi_{\sqrt{\eps}/2^6}(\rho)\psi_\kappa(\rho)\,d\rho\Big|\\
&\lesssim\int_{\mathbb{R}}\frac{|\phi(\beta+\rho)-\phi(\beta)|}{|\rho|}\frac{(2\eps+f(\beta)+g(\beta))^2}{\rho^2+(2\eps+f(\beta)+g(\beta))^2}\psi_\kappa(\rho)\,d\rho\\
&\lesssim \int_{\mathbb{R}}\frac{\eps}{\eps^3+|\beta|^3}\frac{\eps^2}{\rho^2+\eps^2}\psi_\kappa(\rho)\,d\rho\\
&\lesssim \frac{\eps^2}{\eps^3+|\beta|^3},
\end{split}
\end{equation*}
and
\begin{equation*}
II:=\int_{\mathbb{R}}\frac{|\phi(\beta+\rho)|}{|\rho|}\frac{\eps|\rho|(\eps+|\beta|)}{\rho^2+\eps^2}\psi_{\sqrt{\eps}/2^6}(\rho)\psi_\kappa(\rho)\,d\rho\lesssim\int_{\mathbb{R}}\frac{\eps}{\eps^2+\beta^2}\frac{\eps(\eps+|\beta|)}{\rho^2+\eps^2}\psi_\kappa(\rho)\,d\rho\lesssim \frac{\eps}{\eps+|\beta|}.
\end{equation*}
Therefore $|H^\ast_{\leq\kappa}\phi(\beta)|\lesssim\eps^2\kappa^{-3}$, and the desired bound \eqref{pi110} follows using also \eqref{pi111}.
\medskip

We prove now the bound \eqref{pi13}. In view of the definition \eqref{pi7.5},
\begin{equation*}
Mh(\alpha)=\frac{1}{2\pi}\int_{\mathbb{R}}h(\alpha-\rho)\frac{2\eps+f(\alpha)+g(\alpha)}{\rho^2+(2\eps+f(\alpha)+g(\alpha))^2}\psi_{\sqrt{\eps}/2^6}(\alpha)\psi_{\sqrt{\eps}/2^6}(\rho)\,d\rho.
\end{equation*}
Therefore, for any $\alpha\in I_\delta$,
\begin{equation*}
|Mh(\alpha)|\lesssim \int_{\mathbb{R}}|h(\alpha-\rho)|\frac{\eps}{\rho^2+\eps^2}\psi_{\sqrt{\eps}/2^6}(\alpha)\psi_{\sqrt{\eps}/2^6}(\rho)\,d\rho\lesssim \|h\|_{L^\infty(I_{\sqrt\eps})},
\end{equation*}
as desired.
\medskip

Finally, to prove \eqref{pi12} it suffices to prove that
\begin{equation}\label{pi120}
|M^\ast\phi(\beta)|\lesssim \frac{\eps}{\eps^2+\beta^2}\qquad\text{ and }\qquad |(M^\ast\phi)'(\beta)|\lesssim \frac{\eps}{\eps^3+|\beta|^3},
\end{equation}
provided that $\phi\in\mathcal{C}_2$ and $|\beta|\leq\sqrt\eps$. Notice that
\begin{equation*}
M^\ast\phi(\beta)=\frac{1}{2\pi}\int_{\mathbb{R}}\phi(\alpha)\frac{2\eps+f(\alpha)+g(\alpha)}{(\alpha-\beta)^2+(2\eps+f(\alpha)+g(\alpha))^2}\psi_{\sqrt{\eps}/2^6}(\alpha)\psi_{\sqrt{\eps}/2^6}(\alpha-\beta)\,d\alpha.
\end{equation*}
Therefore
\begin{equation*}
|M^\ast\phi(\beta)|\lesssim\int_{\mathbb{R}}\frac{\eps}{\eps^2+\alpha^2}\frac{\eps}{(\alpha-\beta)^2+\eps^2}\psi_{\sqrt{\eps}/2^6}(\alpha)\,d\alpha\lesssim \frac{\eps}{\eps^2+\beta^2},
\end{equation*}
as desired.
Moreover, if $|\beta|\leq 10\eps$ then
\begin{equation*}
|(M^\ast\phi)'(\beta)|\lesssim\int_{\mathbb{R}}\frac{\eps}{\eps^2+\alpha^2}\psi_{\sqrt{\eps}/2^6}(\alpha)\Big[\frac{\eps}{(\alpha-\beta)^2+\eps^2}\frac{1}{\sqrt\eps}+\frac{\eps|\alpha-\beta|}{[(\alpha-\beta)^2+\eps^2]^2}\Big]\,d\alpha\lesssim \eps^{-2}.
\end{equation*}

It remains to prove the derivative bound $|(M^\ast\phi)'(\beta)|\lesssim \eps(\eps+|\beta|)^{-3}$ for $|\beta|\geq 10\eps$. We write
\begin{equation*}
\begin{split}
&M^\ast\phi(\beta)=M_1(\beta)+M_2(\beta),\\
&M_1(\beta):=\frac{1}{2\pi}\int_{\mathbb{R}}\phi(\alpha)\frac{\widetilde{X}(\alpha)}{(\alpha-\beta)^2+\widetilde{X}(\alpha)^2}\psi_{\sqrt{\eps}/2^6}(\alpha)\psi_{\sqrt{\eps}/2^6}(\alpha-\beta)[1-\psi(10(\alpha-\beta)/\beta)]\,d\alpha,\\
&M_2(\beta):=\frac{1}{2\pi}\int_{\mathbb{R}}\phi(\beta+\rho)\frac{\widetilde{X}(\beta+\rho)}{\rho^2+\widetilde{X}(\beta+\rho)^2}\psi_{\sqrt{\eps}/2^6}(\beta+\rho)\psi_{\sqrt{\eps}/2^6}(\rho)\psi(10\rho/\beta)\,d\rho.
\end{split}
\end{equation*}
where $\widetilde{X}(\alpha):=X(\alpha,\alpha)=2\eps+f(\alpha)+g(\alpha)$. As before, we estimate
\begin{equation*}
|M'_1(\beta)|\lesssim \int_{\mathbb{R}}\frac{\eps}{\eps^2+\alpha^2}\psi_{\sqrt{\eps}/2^6}(\alpha)\cdot\eps\beta^{-3}\,d\alpha\lesssim \eps\beta^{-3}.
\end{equation*}
On the other hand, using the assumption \eqref{pi1},
\begin{equation*}
|M'_2(\beta)|\lesssim \int_{\mathbb{R}}|\phi'(\beta+\rho)|\frac{\eps}{\rho^2+\eps^2}\psi(10\rho/\beta)\,d\rho+\int_{\mathbb{R}}|\phi(\beta+\rho)|\frac{\eps}{\rho^2+\eps^2}\psi(4\rho/\beta)\beta^{-1}\,d\rho\lesssim \eps\beta^{-3}.
\end{equation*}
This completes the proof of the bound \eqref{pi120}.
\medskip

\subsection{Proof of Lemma \ref{lemma2}}\label{subs4} In this subsection we prove Lemma \ref{lemma2}. For \eqref{pi20} we write, for $\alpha,\beta\in I_{2\sqrt\eps}$,
\begin{equation}\label{pi130}
\frac{f(\alpha)-f(\beta)}{(\alpha-\beta)^2+(f(\alpha)-f(\beta))^2}=\frac{f'(\beta)}{(\alpha-\beta)(1+f'(\beta)^2)}+K(\alpha,\beta),
\end{equation}
where
\begin{equation}\label{pi131}
\begin{split}
K(\alpha,\beta)&:=\frac{f(\alpha)-f(\beta)}{(\alpha-\beta)^2+(f(\alpha)-f(\beta))^2}-\frac{f'(\beta)}{(\alpha-\beta)(1+f'(\beta)^2)}=K_1(\alpha,\beta)+K_2(\alpha,\beta),\\
K_1(\alpha,\beta)&:=\frac{f(\alpha)-f(\beta)-(\alpha-\beta)f'(\beta)}{(\alpha-\beta)^2(1+\widetilde{f}(\alpha,\beta)^2)},\\
K_2(\alpha,\beta)&:=\frac{f'(\beta)[f'(\beta)+\widetilde{f}(\alpha,\beta)][f'(\beta)-\widetilde{f}(\alpha,\beta)]}{(\alpha-\beta)(1+\widetilde{f}(\alpha,\beta)^2)(1+f'(\beta)^2)},\\
\widetilde{f}(\alpha,\beta)&:=\frac{f(\alpha)-f(\beta)}{\alpha-\beta}.
\end{split}
\end{equation}
Since
\begin{equation*}
\begin{split}
&f(\alpha)-f(\beta)=(\alpha-\beta)\int_0^1f'(\beta+(\alpha-\beta)s)\,ds,\\
&f(\alpha)-f(\beta)-(\alpha-\beta)f'(\beta)=(\alpha-\beta)^2\int_0^1(1-s)f''(\beta+(\alpha-\beta)s)\,ds,
\end{split}
\end{equation*}
it follows from \eqref{pi1} that
\begin{equation*}
\sup_{\alpha,\beta\in I_{2\sqrt\eps}}\big[|K(\alpha,\beta)|+|\partial_\alpha K(\alpha,\beta)|+|\partial_\beta K(\alpha,\beta)|\big]\lesssim 1.
\end{equation*}
Therefore, letting
\begin{equation*}
T_{1,f}^1h(\alpha):=\frac{1}{2\pi}\int_{\mathbb{R}}K(\alpha,\beta)h(\beta)\psi_{\sqrt{\eps}}(\alpha-\beta)\,d\beta,
\end{equation*}
and using also \eqref{pi10}, we have
\begin{equation}\label{pi132}
\|T_{1,f}^1h\|_Z\lesssim \eps\|T_{1,f}^1h\|_{C^1(I_{\sqrt{\eps}})}\lesssim\eps.
\end{equation}

To continue, we fix a suitable partition of $\psi$: let $j_0$ denote the largest integer with the property that $2^{j_0}\leq \eps$ and let
\begin{equation}\label{pi100}
\eta_j(\alpha):=
\begin{cases}
\psi_{2^{j_0}}(\alpha)\qquad&\text{ if }j=j_0;\\
\psi_{2^{j}}(\alpha)-\psi_{2^{j-1}}(\alpha)\qquad&\text{ if }j\in[j_0+1,0]\cap\mathbb{Z}.
\end{cases}
\end{equation}
Notice that
\begin{equation}\label{pi100.2}
\sum_{j=j_0}^0\eta_j=\psi.
\end{equation}
For $k_1,k_2\in[j_0,j_0/2+4]\cap\mathbb{Z}$ we define
\begin{equation*}
h_{k_1}(\beta):=h(\beta)\eta_{k_1}(\beta),\qquad H_{k_1,k_2}(\alpha):=\eta_{k_2}(\alpha)\frac{1}{2\pi}\int_{\mathbb{R}}\frac{f'(\beta)}{(\alpha-\beta)(1+f'(\beta)^2)}h_{k_1}(\beta)\psi_{\sqrt{\eps}}(\alpha-\beta)\,d\beta.
\end{equation*}

To prove \eqref{pi20} we may assume that $\|h\|_{I_{2\sqrt\eps}}\leq 1$. In view of \eqref{pi132} it suffices to show that
\begin{equation}\label{pi133}
\Big\|\sum_{k_1,k_2\in[j_0,j_0/2+4]\cap\mathbb{Z}}H_{k_1,k_2}\Big\|_Z\lesssim \eps\log(1/\eps).
\end{equation}
Using \eqref{pi11}, the $L^2$ boundedness of the Hilbert transform, and the assumption $|f'(\beta)|\lesssim\beta$ (see \eqref{pi1}), we estimate
\begin{equation*}
\|H_{k_1,k_2}\|_Z\lesssim \eps^22^{-3k_2}\|H_{k_1,k_2}\|_{L^1}\lesssim \eps^22^{-5k_2/2}\|H_{k_1,k_2}\|_{L^2}\lesssim \eps^22^{-5k_2/2}\cdot 2^{k_1}\|h_{k_1}\|_{L^2}\lesssim \eps^22^{-5k_2/2}2^{3k_1/2}.
\end{equation*}
Therefore
\begin{equation*}
\sum_{k_1,k_2\in[j_0,j_0/2+4]\cap\mathbb{Z},\,k_2\geq k_1-10}\|H_{k_1,k_2}\|_Z\lesssim \eps.
\end{equation*}
On the other hand, using the definition and the assumption $|f'(\beta)|\lesssim\beta$, for any $k_1\in[j_0,j_0/2+4]\cap\mathbb{Z}$ we have
\begin{equation*}
\Big\|\sum_{k_2\in[j_0,k_1-9]\cap\mathbb{Z}}H_{k_1,k_2}\Big\|_{C^1}\lesssim 1.
\end{equation*}
Therefore, using \eqref{pi10} and the last two estimates, for any $k_1\in[j_0,j_0/2+4]\cap\mathbb{Z}$ we have
\begin{equation*}
\Big\|\sum_{k_2\in[j_0,j_0/2+4]\cap\mathbb{Z}}H_{k_1,k_2}\Big\|_{Z}\lesssim \eps,
\end{equation*}
and the desired bound \eqref{pi133} follows.
\medskip

We prove now the bound \eqref{pi21}. We may assume $\|h\|_{L^\infty(I_{2\sqrt\eps})}\leq 1$. By symmetry, it suffices to prove that
\begin{equation}\label{pi135}
\|(M-T_{3,(f,g)})h\|_Z\lesssim \eps\log(1/\eps).
\end{equation}
For $k_1,k_2\in[j_0,j_0/2+4]\cap\mathbb{Z}$ let, as before,
\begin{equation*}
\begin{split}
&h_{k_1}(\beta):=h(\beta)\eta_{k_1}(\beta),\\ &I_{k_1,k_2}(\alpha):=\eta_{k_2}(\alpha)\frac{1}{2\pi}\int_{\mathbb{R}}\frac{2\eps+f(\alpha)+g(\beta)}{(\alpha-\beta)^2+(2\eps+f(\alpha)+g(\beta))^2}h_{k_1}(\beta)\psi_{\sqrt{\eps}}(\alpha-\beta)\,d\beta,\\ 
&\widetilde{I}_{k_1,k_2}(\alpha):=\eta_{k_2}(\alpha)\frac{1}{2\pi}\int_{\mathbb{R}}\frac{2\eps+f(\alpha)+g(\alpha)}{(\alpha-\beta)^2+(2\eps+f(\alpha)+g(\alpha))^2}h_{k_1}(\beta)\psi_{\sqrt{\eps}/2^6}(\alpha)\psi_{\sqrt{\eps}/2^6}(\alpha-\beta)\,d\beta.
\end{split}
\end{equation*}
For \eqref{pi135} it suffices to prove that
\begin{equation}\label{pi136}
\Big\|\sum_{k_1,k_2\in[j_0,j_0/2+4]\cap\mathbb{Z}}(I_{k_1,k_2}-\widetilde{I}_{k_1,k_2})\Big\|_Z\lesssim \eps\log(1/\eps).
\end{equation}

If $k_2\geq j_0/2-20$ we estimate, using \eqref{pi11},
\begin{equation*}
\begin{split}
\Big\|\sum_{k_1\in[j_0,j_0/2+4]\cap\mathbb{Z}}(I_{k_1,k_2}-\widetilde{I}_{k_1,k_2})\Big\|_Z&\lesssim \eps^22^{-3k_2}\Big\|\sum_{k_1\in[j_0,j_0/2+4]\cap\mathbb{Z}}[|I_{k_1,k_2}|+|\widetilde{I}_{k_1,k_2}|]\Big\|_{L^1}\lesssim\eps^22^{-3k_2}2^{k_2}\lesssim\eps.
\end{split}
\end{equation*}
On the other hand, if $k_1\geq j_0/2-10$ we estimate, using \eqref{pi10},
\begin{equation*}
\begin{split}
\Big\|\sum_{k_2\in[j_0,j_0/2-21]\cap\mathbb{Z}}(I_{k_1,k_2}-\widetilde{I}_{k_1,k_2})\Big\|_Z&\lesssim \eps\Big\|\sum_{k_2\in[j_0,j_0/2-21]\cap\mathbb{Z}}(I_{k_1,k_2}-\widetilde{I}_{k_1,k_2})\Big\|_{C^1}\lesssim\eps.
\end{split}
\end{equation*}
Therefore, for \eqref{pi136} it suffices to prove that, for any $k_1\in[j_0,j_0/2-11]\cap\mathbb{Z}$ fixed,
\begin{equation}\label{pi137}
\Big\|\sum_{k_2\in[j_0,j_0/2-21]\cap\mathbb{Z}}(I_{k_1,k_2}-\widetilde{I}_{k_1,k_2})\Big\|_Z\lesssim \eps.
\end{equation}

Notice that
\begin{equation}\label{pi137.5}
(I_{k_1,k_2}-\widetilde{I}_{k_1,k_2})(\alpha)=\eta_{k_2}(\alpha)\frac{1}{2\pi}\int_{\mathbb{R}}\frac{[g(\beta)-g(\alpha)][(\alpha-\beta)^2-X(\alpha,\beta)X(\alpha,\alpha)]}{[(\alpha-\beta)^2+X(\alpha,\beta)^2][(\alpha-\beta)^2+X(\alpha,\alpha)^2]}h_{k_1}(\beta)\,d\beta
\end{equation}
if $k_1\in[j_0,j_0/2-11]$ and $k_2\in[j_0,j_0/2-21]\cap\mathbb{Z}$, where, as before,
\begin{equation*}
X(\alpha,\beta)=2\eps+f(\alpha)+g(\beta).
\end{equation*}
Notice that $X(\alpha,\beta)\approx\eps$, $(\partial_\alpha X)(\alpha,\beta)\lesssim |\alpha|$, $(\partial_\beta X)(\alpha,\beta)\lesssim |\beta|$, see \eqref{pi1}. Therefore, using first \eqref{pi11}, we can estimate if $k_2\geq k_1-10$, 
\begin{equation*}
\|(I_{k_1,k_2}-\widetilde{I}_{k_1,k_2})\|_Z\lesssim \eps^22^{-3k_2}\cdot 2^{k_2}\|(I_{k_1,k_2}-\widetilde{I}_{k_1,k_2})\|_{L^\infty}\lesssim \eps^22^{-2k_2}\cdot 2^{k_2}\log(1+2^{k_2}/\eps)
\end{equation*}
Therefore
\begin{equation}\label{pi138}
\Big\|\sum_{k_2\in[j_0,j_0/2-21]\cap\mathbb{Z},\,k_2\geq k_1-10}(I_{k_1,k_2}-\widetilde{I}_{k_1,k_2})\Big\|_Z\lesssim \eps.
\end{equation}
On the other hand, using \eqref{pi1} and the definition \eqref{pi137.5},
\begin{equation*}
\Big\|\sum_{k_2\in[j_0,k_1-11]\cap\mathbb{Z}}(I_{k_1,k_2}-\widetilde{I}_{k_1,k_2})\Big\|_{C^1}\lesssim 1.
\end{equation*}
The desired bound \eqref{pi137} follows using also \eqref{pi10} and \eqref{pi138}.
\medskip

We prove now the bound \eqref{pi22}. We may assume $\|h\|_{L^\infty(I_{2\sqrt\eps})}\leq 1$. By symmetry, it suffices to prove that
\begin{equation}\label{pi140}
\Big|\int_{I_{\sqrt\eps}}(H+T_{4,(g,f)}-T_{2,f})h(\alpha)\phi(\alpha)\,d\alpha\Big|\lesssim \eps\log(1/\eps).
\end{equation}

Using the definitions, we write
\begin{equation*}
\begin{split}
&[-T_{4,(g,f)}+T_{2,f}]h(\alpha)\\
&=\frac{1}{2\pi}\int_{\mathbb{R}}\frac{(\alpha-\beta)\big[(2\eps+f(\alpha)+g(\alpha))^2+2(2\eps+f(\alpha)+g(\alpha))(f(\beta)-f(\alpha))\big]}{[(\alpha-\beta)^2+(f(\alpha)-f(\beta))^2]\cdot[(\alpha-\beta)^2+(2\eps+f(\beta)+g(\alpha))^2]}h(\beta)\psi_{\sqrt{\eps}}(\alpha-\beta)\,d\beta\\
&=J^1(\alpha)+J^2(\alpha),
\end{split}
\end{equation*}
where, with $X(\mu,\nu):=2\eps+f(\mu)+g(\nu)$ as before,
\begin{equation*}
\begin{split}
&J^1(\alpha):=\frac{1}{2\pi}\int_{\mathbb{R}}\frac{(\alpha-\beta)\big[2(2\eps+f(\alpha)+g(\alpha))(f(\beta)-f(\alpha))\big]}{[(\alpha-\beta)^2+(f(\alpha)-f(\beta))^2]\cdot[(\alpha-\beta)^2+X(\beta,\alpha)^2]}h(\beta)\psi_{\sqrt{\eps}}(\alpha-\beta)\,d\beta,\\
&J^2(\alpha):=\frac{1}{2\pi}\int_{\mathbb{R}}\frac{(\alpha-\beta)(2\eps+f(\alpha)+g(\alpha))^2}{[(\alpha-\beta)^2+(f(\alpha)-f(\beta))^2]\cdot[(\alpha-\beta)^2+X(\beta,\alpha)^2]}h(\beta)\psi_{\sqrt{\eps}}(\alpha-\beta)\,d\beta.
\end{split}
\end{equation*}

Notice that, using \eqref{pi1},
\begin{equation}\label{pi141}
|J^1(\alpha)|\lesssim \int_{\mathbb{R}}\frac{|f(\beta)-f(\alpha)|}{|\beta-\alpha|}\frac{\eps}{\eps^2+(\alpha-\beta)^2}\psi_{\sqrt{\eps}}(\alpha-\beta)\,d\beta\lesssim|\alpha|+\eps.
\end{equation}
Moreover, let
\begin{equation*}
J^3(\alpha):=\frac{1}{2\pi}\int_{\mathbb{R}}\frac{(2\eps+f(\alpha)+g(\alpha))^2}{[1+f'(\alpha)^2]\cdot[(\alpha-\beta)^2+X(\alpha,\alpha)^2]}\frac{h(\beta)}{\alpha-\beta}\psi_{\sqrt{\eps}}(\alpha-\beta)\,d\beta.
\end{equation*}
Recalling the definition $\widetilde{f}(\alpha,\beta)=\frac{f(\alpha)-f(\beta)}{\alpha-\beta}$, we estimate, using \eqref{pi1},
\begin{equation*}
\Big|[1+\widetilde{f}(\alpha,\beta)^2]\cdot [(\alpha-\beta)^2+X(\beta,\alpha)^2]-[1+f'(\alpha)^2]\cdot[(\alpha-\beta)^2+X(\alpha,\alpha)^2]\Big|\lesssim \eps|\alpha-\beta|(|\alpha|+|\beta|),
\end{equation*}
for any $\alpha,\beta\in I_{2\sqrt\eps}$. Therefore
\begin{equation}\label{pi142}
|J^2(\alpha)-J^3(\alpha)|\lesssim\int_{\mathbb{R}}\frac{\eps^2\cdot\eps|\alpha-\beta|(|\alpha|+|\beta|) }{[(\alpha-\beta)^2+\eps^2]^2}\frac{|h(\beta)|}{|\alpha-\beta|}\psi_{\sqrt{\eps}}(\alpha-\beta)\,d\beta\lesssim |\alpha|+\eps.
\end{equation}

It follows from \eqref{pi141} and \eqref{pi142} that
\begin{equation*}
\int_{I_{\sqrt\eps}}|J^1(\alpha)+J^2(\alpha)-J^3(\alpha)|\cdot|\phi(\alpha)|\,d\alpha\lesssim \eps\log(1/\eps)
\end{equation*}
for any $\phi\in\mathcal{C}_2$. Therefore, for \eqref{pi140} it remains to prove that, for any $\phi\in\mathcal{C}_2$ supported in $I_{\sqrt\eps}$,
\begin{equation}\label{pi145}
\Big|\int_{\mathbb{R}}[Hh(\alpha)-J^3(\alpha)]\phi(\alpha)\,d\alpha\Big|\lesssim \eps\log(1/\eps).
\end{equation}

Using the definitions, the left-hand side of \eqref{pi145} is equal to
\begin{equation*}
\Big|\int_{\mathbb{R}}h(\beta)\Phi(\beta)\,d\beta\Big|
\end{equation*}
where
\begin{equation*}
\Phi(\beta):=\frac{1}{2\pi}\int_{\mathbb{R}}\frac{\phi(\beta+\rho)}{\rho}\frac{X(\beta+\rho,\beta+\rho)^2}{\rho^2+X(\beta+\rho,\beta+\rho)^2}\Big[\psi_{\sqrt{\eps}/2^6}(\beta+\rho)\psi_{\sqrt{\eps}/2^6}(\rho)-\frac{\psi_{\sqrt\eps}(\rho)}{1+f'(\beta+\rho)^2}\Big]\,d\rho.
\end{equation*}
An argument similar to the one used in the proof of \eqref{pi110} shows that $|\Phi(\beta)|\lesssim\sqrt\eps\mathbf{1}_{I_{2\sqrt\eps}}(\beta)$, and the desired bound \eqref{pi145} follows.

\subsection{Proof of Lemma \ref{lemma3}}\label{subs5} In this subsection we prove Lemma \ref{lemma3}. In view of the definitions we have
\begin{equation}\label{pi150}
(H^\ast M^\ast-M^\ast H^\ast)\phi(\alpha)=\frac{1}{4\pi^2}\int_{\mathbb{R}}K(\alpha,\beta)\phi(\beta)\,d\beta,
\end{equation}
where, with $\widetilde{X}(\alpha)=X(\alpha,\alpha)=2\eps+f(\alpha)+g(\alpha)$,
\begin{equation*}
\begin{split}
K(\alpha,\beta):&=\int_{\mathbb{R}}\frac{\psi_{\sqrt\eps/2^6}(\mu-\alpha)}{\mu-\alpha}\frac{\widetilde{X}(\mu)^2\psi_{\sqrt\eps/2^6}(\mu)}{(\mu-\alpha)^2+\widetilde{X}(\mu)^2}\cdot\frac{\widetilde{X}(\beta)\psi_{\sqrt\eps/2^6}(\beta)}{(\beta-\mu)^2+\widetilde{X}(\beta)^2}\psi_{\sqrt\eps/2^6}(\beta-\mu)\,d\mu\\
&-\int_{\mathbb{R}}\frac{\widetilde{X}(\mu)\psi_{\sqrt\eps/2^6}(\mu)}{(\mu-\alpha)^2+\widetilde{X}(\mu)^2}\psi_{\sqrt\eps/2^6}(\mu-\alpha)\cdot\frac{\psi_{\sqrt\eps/2^6}(\beta-\mu)}{\beta-\mu}\frac{\widetilde{X}(\beta)^2\psi_{\sqrt\eps/2^6}(\beta)}{(\beta-\mu)^2+\widetilde{X}(\beta)^2}\,d\mu.
\end{split}
\end{equation*}
After changes of variables this becomes
\begin{equation*}
K(\alpha,\beta)=\int_{\mathbb{R}}\frac{\psi_{\sqrt\eps/2^6}(\rho)}{\rho}\psi_{\sqrt\eps/2^6}(\beta-\alpha-\rho)\widetilde{K}(\alpha,\beta;\rho)\,d\rho,
\end{equation*}
where
\begin{equation*}
\begin{split}
\widetilde{K}(\alpha,\beta;\rho):&=\frac{\widetilde{X}(\alpha+\rho)^2\psi_{\sqrt\eps/2^6}(\alpha+\rho)}{\rho^2+\widetilde{X}(\alpha+\rho)^2}\cdot\frac{\widetilde{X}(\beta)\psi_{\sqrt\eps/2^6}(\beta)}{(\beta-\alpha-\rho)^2+\widetilde{X}(\beta)^2}\\
&-\frac{\widetilde{X}(\beta)^2\psi_{\sqrt\eps/2^6}(\beta)}{\rho^2+\widetilde{X}(\beta)^2}\cdot\frac{\widetilde{X}(\beta-\rho)\psi_{\sqrt\eps/2^6}(\beta-\rho)}{(\beta-\alpha-\rho)^2+\widetilde{X}(\beta-\rho)^2}.
\end{split}
\end{equation*}

We will show that, for any $\alpha,\beta\in\mathbb{R}$,
\begin{equation}\label{pi152.5}
|K(\alpha,\beta)|\lesssim \psi_{\sqrt\eps}(\beta)\psi_{\sqrt\eps}(\alpha)\frac{\eps(|\alpha|+|\beta|+\eps)}{(\beta-\alpha)^2+\eps^2}.
\end{equation}
Assuming this, for any $\phi\in\mathcal{C}_2$ we estimate
\begin{equation*}
\Big\|\int_{\mathbb{R}}\phi(\beta)K(\alpha,\beta)\,d\beta\Big\|_{L^1_\alpha}\lesssim \int_{\mathbb{R}}\int_{\mathbb{R}}\psi_{\sqrt\eps}(\beta)\psi_{\sqrt\eps}(\alpha)\frac{\eps(|\alpha|+|\beta|+\eps)}{(\beta-\alpha)^2+\eps^2}\cdot\frac{\eps}{\eps^2+\beta^2}\,d\beta d\alpha\lesssim \eps\log(1/\eps),
\end{equation*}
as desired.

It remains to prove \eqref{pi152.5}. We write
\begin{equation*}
\begin{split}
&\widetilde{K}(\alpha,\beta;\rho)=\widetilde{K}_1(\alpha,\beta;\rho)+\widetilde{K}_2(\alpha,\beta;\rho)+\widetilde{K}_3(\alpha,\beta;\rho),\\
&\widetilde{K}_1(\alpha,\beta;\rho):=\frac{\widetilde{X}(\beta)\psi_{\sqrt\eps/2^6}(\beta)}{(\beta-\alpha-\rho)^2+\widetilde{X}(\beta)^2}\Big[\frac{\widetilde{X}(\alpha+\rho)^2\psi_{\sqrt\eps/2^6}(\alpha+\rho)}{\rho^2+\widetilde{X}(\alpha+\rho)^2}-\frac{\widetilde{X}(\alpha)^2\psi_{\sqrt\eps/2^6}(\alpha)}{\rho^2+\widetilde{X}(\alpha)^2}\Big],\\
&\widetilde{K}_2(\alpha,\beta;\rho):=\frac{\widetilde{X}(\beta)\psi_{\sqrt\eps/2^6}(\beta)}{(\beta-\alpha-\rho)^2+\widetilde{X}(\beta)^2}\Big[\frac{\widetilde{X}(\alpha)^2\psi_{\sqrt\eps/2^6}(\alpha)}{\rho^2+\widetilde{X}(\alpha)^2}-\frac{\widetilde{X}(\beta)^2\psi_{\sqrt\eps/2^6}(\beta)}{\rho^2+\widetilde{X}(\beta)^2}\Big],\\
&\widetilde{K}_3(\alpha,\beta;\rho):=\frac{\widetilde{X}(\beta)^2\psi_{\sqrt\eps/2^6}(\beta)}{\rho^2+\widetilde{X}(\beta)^2}\Big[\frac{\widetilde{X}(\beta)\psi_{\sqrt\eps/2^6}(\beta)}{(\beta-\alpha-\rho)^2+\widetilde{X}(\beta)^2}-\frac{\widetilde{X}(\beta-\rho)\psi_{\sqrt\eps/2^6}(\beta-\rho)}{(\beta-\alpha-\rho)^2+\widetilde{X}(\beta-\rho)^2}\Big].
\end{split}
\end{equation*}
Recalling that $X(\mu)\approx\eps$ and $|X'(\mu)|\lesssim |\mu|+\eps$ if $\mu\in I_{\sqrt\eps}$, we have the simple estimates
\begin{equation}\label{pi153}
\begin{split}
&|\widetilde{K}_1(\alpha,\beta;\rho)|\lesssim \frac{\eps\psi_{\sqrt\eps/2^6}(\beta)}{(\beta-\alpha-\rho)^2+\eps^2}\cdot \frac{|\rho|\eps(|\alpha|+|\rho|+\eps)}{\rho^2+\eps^2},\\
&|\widetilde{K}_3(\alpha,\beta;\rho)|\lesssim \frac{\eps^2\psi_{\sqrt\eps/2^6}(\beta)}{\rho^2+\eps^2}\cdot \frac{|\rho|(|\beta|+|\rho|+\eps)}{(\beta-\alpha-\rho)^2+\eps^2}.
\end{split}
\end{equation}
if $\rho,\alpha-\beta-\rho\in I_{\sqrt\eps}$. Therefore, letting
\begin{equation}\label{pi152}
K_j(\alpha,\beta):=\int_{\mathbb{R}}\frac{\psi_{\sqrt\eps/2^6}(\rho)}{\rho}\psi_{\sqrt\eps/2^6}(\beta-\alpha-\rho)\widetilde{K}_j(\alpha,\beta;\rho)\,d\rho,\qquad j\in\{1,2,3\},
\end{equation}
it follows that
\begin{equation*}
\begin{split}
|K_1(\alpha,\beta)|+|K_3(\alpha,\beta)|&\lesssim \psi_{\sqrt\eps/2^6}(\beta)\int_{\mathbb{R}}\frac{\eps\psi_{\sqrt\eps/2^6}(\rho)}{\rho^2+\eps^2}\frac{\eps\psi_{\sqrt\eps/2^6}(\beta-\alpha-\rho)}{(\beta-\alpha-\rho)^2+\eps^2}(|\alpha|+|\beta|+|\rho|+\eps)\,d\rho\\
&\lesssim \psi_{\sqrt\eps}(\beta)\psi_{\sqrt\eps}(\alpha)\frac{\eps(|\alpha|+|\beta|+\eps)}{(\beta-\alpha)^2+\eps^2},
\end{split}
\end{equation*}
as desired.

For \eqref{pi152.5} it remains to prove that
\begin{equation}\label{pi155}
|K_2(\alpha,\beta)|\lesssim \psi_{\sqrt\eps}(\beta)\psi_{\sqrt\eps}(\alpha)\frac{\eps(|\alpha|+|\beta|+\eps)}{(\beta-\alpha)^2+\eps^2}.
\end{equation}
For this we examine the formula \eqref{pi152} and decompose
\begin{equation*}
K_2(\alpha,\beta)=K_2^1(\alpha,\beta)+K_2^2(\alpha,\beta),
\end{equation*}
where, with $\kappa:=(\eps+|\alpha-\beta|)/100$,
\begin{equation*}
\begin{split}
&K_2^1(\alpha,\beta):=\int_{\mathbb{R}}\frac{\psi_{\kappa}(\rho)}{\rho}\psi_{\sqrt\eps/2^6}(\beta-\alpha-\rho)\widetilde{K}_2(\alpha,\beta;\rho)\,d\rho,\\
&K_2^2(\alpha,\beta):=\int_{\mathbb{R}}\frac{\psi_{\sqrt\eps/2^6}(\rho)-\psi_{\kappa}(\rho)}{\rho}\psi_{\sqrt\eps/2^6}(\beta-\alpha-\rho)\widetilde{K}_2(\alpha,\beta;\rho)\,d\rho.
\end{split}
\end{equation*}
We can bound the contribution of $K_2^2$ as before; as in \eqref{pi153}, we estimate first
\begin{equation*}
|\widetilde{K}_2(\alpha,\beta;\rho)|\lesssim \frac{\eps\psi_{\sqrt\eps/2^6}(\beta)}{(\beta-\alpha-\rho)^2+\eps^2}\cdot \frac{|\alpha-\beta|\eps(|\alpha|+|\beta|+\eps)}{\rho^2+\eps^2}
\end{equation*}
if $\rho,\alpha-\beta-\rho\in I_{\sqrt\eps}$, therefore
\begin{equation}\label{pi156}
\begin{split}
|K_2^2(\alpha,\beta)|&\lesssim \int_{\mathbb{R}}\frac{\psi_{\sqrt\eps/2^6}(\rho)-\psi_{\kappa}(\rho)}{|\rho|}\psi_{\sqrt\eps/2^6}(\beta-\alpha-\rho)\frac{\eps^2\psi_{\sqrt\eps/2^6}(\beta)}{(\beta-\alpha-\rho)^2+\eps^2}\cdot \frac{|\alpha-\beta|(|\alpha|+|\beta|+\eps)}{\rho^2+\eps^2}\,d\rho\\
&\lesssim \eps\psi_{\sqrt\eps}(\beta)\psi_{\sqrt\eps}(\alpha)(|\alpha|+|\beta|+\eps)\int_{\mathbb{R}}\frac{\psi_{\sqrt\eps/2^6}(\rho)-\psi_{\kappa}(\rho)}{|\rho|}\frac{\eps\psi_{\sqrt\eps/2^6}(\beta-\alpha-\rho)}{(\beta-\alpha-\rho)^2+\eps^2}\cdot \frac{|\alpha-\beta|}{\rho^2+\eps^2}\,d\rho\\
&\lesssim \frac{\eps\psi_{\sqrt\eps}(\beta)\psi_{\sqrt\eps}(\alpha)(|\alpha|+|\beta|+\eps)}{(\beta-\alpha)^2+\eps^2}.
\end{split}
\end{equation}

To bound $|K_2^1|$ we notice that
\begin{equation*}
K_2^1(\alpha,\beta)=\int_{\mathbb{R}}\frac{\psi_{\kappa}(\rho)}{\rho}\big[\psi_{\sqrt\eps/2^6}(\beta-\alpha-\rho)\widetilde{K}_2(\alpha,\beta;\rho)-\psi_{\sqrt\eps/2^6}(\beta-\alpha)\widetilde{K}'_2(\alpha,\beta;\rho)\big]\,d\rho,
\end{equation*}
where
\begin{equation*}
\widetilde{K}'_2(\alpha,\beta;\rho):=\frac{\widetilde{X}(\beta)\psi_{\sqrt\eps/2^6}(\beta)}{(\beta-\alpha)^2+\widetilde{X}(\beta)^2}\Big[\frac{\widetilde{X}(\alpha)^2\psi_{\sqrt\eps/2^6}(\alpha)}{\rho^2+\widetilde{X}(\alpha)^2}-\frac{\widetilde{X}(\beta)^2\psi_{\sqrt\eps/2^6}(\beta)}{\rho^2+\widetilde{X}(\beta)^2}\Big].
\end{equation*}
Clearly, $K_2^1(\alpha,\beta)=0$ unless $\alpha,\beta\in I_{\sqrt\eps/8}$. On the other hand, if $\alpha,\beta\in I_{\sqrt\eps/8}$ and $\rho\in I_\kappa$ then we estimate
\begin{equation*}
\begin{split}
\big|\psi_{\sqrt\eps/2^6}&(\beta-\alpha-\rho)\widetilde{K}_2(\alpha,\beta;\rho)-\psi_{\sqrt\eps/2^6}(\beta-\alpha)\widetilde{K}'_2(\alpha,\beta;\rho)\big|\\
&\lesssim\Big|\psi_{\sqrt\eps/2^6}(\beta-\alpha-\rho)\frac{\widetilde{X}(\beta)\psi_{\sqrt\eps/2^6}(\beta)}{(\beta-\alpha-\rho)^2+\widetilde{X}(\beta)^2}-\psi_{\sqrt\eps/2^6}(\beta-\alpha)\frac{\widetilde{X}(\beta)\psi_{\sqrt\eps/2^6}(\beta)}{(\beta-\alpha)^2+\widetilde{X}(\beta)^2}\Big|\\
&\times\Big|\frac{\widetilde{X}(\alpha)^2\psi_{\sqrt\eps/2^6}(\alpha)}{\rho^2+\widetilde{X}(\alpha)^2}-\frac{\widetilde{X}(\beta)^2\psi_{\sqrt\eps/2^6}(\beta)}{\rho^2+\widetilde{X}(\beta)^2}\Big|\\
&\lesssim \frac{\eps|\rho|(|\beta-\alpha|+\eps)}{[(\beta-\alpha)^2+\eps^2]^2}\cdot \frac{|\alpha-\beta|\eps(|\alpha|+|\beta|+\eps)}{\rho^2+\eps^2}.
\end{split}
\end{equation*}
Therefore, if $\alpha,\beta\in I_{\sqrt\eps/8}$,
\begin{equation*}
|K_2^1(\alpha,\beta)|\lesssim \int_{\mathbb{R}}\frac{\psi_{\kappa}(\rho)}{|\rho|}\frac{\eps|\rho|(|\beta-\alpha|+\eps)}{[(\beta-\alpha)^2+\eps^2]^2}\cdot \frac{|\alpha-\beta|\eps(|\alpha|+|\beta|+\eps)}{\rho^2+\eps^2}\,d\rho\lesssim \frac{\eps(|\alpha|+|\beta|+\eps)}{(\beta-\alpha)^2+\eps^2}.
\end{equation*}
The desired bound \eqref{pi155} follows using also \eqref{pi156}, which completes the proof of the lemma.

\section{Proof of the main theorem, II: completion of the proof}\label{MainThm2}

We show first that the difference of the normal components of the velocity field $v^1$ at the points $\alpha_1$ and $\alpha_2$ is very small. More precisely:

\begin{lemma}\label{diffvel}
With the notation in subsection \ref{BiRo}, we have
\begin{equation*}
\big|[W_1(0)]_2-[W_2(0)]_2\big|\lesssim d\log(1/d).
\end{equation*}
\end{lemma}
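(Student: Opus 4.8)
The plan is to deduce the estimate from Proposition \ref{TechProp}, after rewriting the formulas \eqref{ko106} in the form \eqref{pi26}. With the notation of subsection \ref{BiRo}, the relevant small parameter will be $\eps\approx d=|z(\alpha_2,t)-z(\alpha_1,t)|$; since the claim is trivial when $d\gtrsim 1$, we may and do shrink $\varep_4$ so that $d$ is as small as we wish. One preliminary point: the hypotheses of Proposition \ref{TechProp} include the no-crossing condition $f(\alpha)+g(\beta)\geq-\eps$ on $I_{4\sqrt\eps}$, while our only control on $f_1,f_2$ is in $C^4$ (see \eqref{ko76}), so a fixed rescaling $\mu\mapsto\lambda\mu$ with $\lambda=\lambda(A)\in(0,1]$ small is needed to make the rescaled arcs flat enough; since the Birkhoff--Rott kernel is invariant under $(z,\omega)\mapsto(\lambda z,\omega)$, the powers of $\lambda$ in \eqref{ko106} cancel, and the rescaled vorticities still obey $\lesssim 1$.

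Concretely I would set $\eps:=d/(2\lambda)$, $\delta:=\varep_5^4$, and, writing $F_j(\mu):=\lambda^{-1}f_j(\lambda\mu)$, put $f:=-F_1-\eps$ and $g:=F_2-\eps$; then $f(0)=f'(0)=g(0)=g'(0)=0$ and $\|f\|_{C^3}+\|g\|_{C^3}\lesssim 1$ by \eqref{ko76} (the second derivatives of $F_j$ are $\lesssim\lambda$), one has $2\eps+f(\alpha)+g(\beta)=F_2(\beta)-F_1(\alpha)$, and the no-crossing condition follows from $F_2-F_1\geq 2\eps$ (the rescaled form of $f_2-f_1\geq d$ in \eqref{ko76}) together with the flatness of $F_j$ on $I_{4\sqrt\eps}$; also $\eps\leq\delta^2/100$ because $d\leq\varep_4\leq\varep_5^{100}$. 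The rescaled vorticities $\omega^+,\omega^-$ are then signed multiples of the rescaled $\omega_1,\omega_2$, bounded in $L^\infty$ by Lemma \ref{vort} and \eqref{ko107}, and the rescaled $W_1,W_2$ have $W_n(0)$ unchanged, so it suffices to bound $\big|[W_1(0)]_2-[W_2(0)]_2\big|$ in the rescaled picture.

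The three identities \eqref{pi26} now come out of \eqref{ko106} by bookkeeping. Splitting the cutoff $\psi_{\varep_5^{10}}(\beta_m(\rho))$ as $\psi_{\sqrt\eps}(\mu-\rho)$ plus a remainder, the first summand turns the four kernels in the Birkhoff--Rott integrals into $T_{1,f},T_{1,g},T_{3,(f,g)},T_{3,(g,f)}$ in the first components of $W_1,W_2$ and into $T_{2,f},T_{2,g},T_{4,(f,g)},T_{4,(g,f)}$ in the second components, while the local term $\frac{\omega_n(1,f'_n)}{2(1+|f'_n|^2)}$ supplies (up to the choice of signs) the $\tfrac12\omega^\pm$ terms in the first two identities and a term of size $O(|\alpha|)$ in the third, using $f'_n(0)=0$. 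One takes $G:=[W_1(\cdot)]_2-[W_2(\cdot)]_2$ — the second coordinate being the component of $v^1$ along $e$, the direction in which the two arcs approach — and lets $F^\pm$ be the first components of $\widetilde R^{-1}v^1$ at the two conjugate points minus the $S^\infty_t$-contributions, which are $C^1$-bounded by \eqref{ko107} and \eqref{ko102}. The error terms $E^\pm,E$ collect: (i) the corrections $\frac1{1+|f'_n|^2}-1=O(|\alpha|^2)$; (ii) the difference $[\widetilde R^{-1}S^\infty_t(z(\alpha_1,t))]_2-[\widetilde R^{-1}S^\infty_t(z(\alpha_2,t))]_2$, which is $O(d)=O(\eps)$ since $S^\infty_t$ is $C^1$-bounded and the points are at distance $d$; and (iii) the tail remainder from replacing $\psi_{\varep_5^{10}}$ by $\psi_{\sqrt\eps}$.

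The substantive step is verifying the smallness of these errors required in \eqref{pi25}. Items (i)--(ii) are $O(\eps)$ (or $O(|\alpha|^2)$) pointwise on $I_{\sqrt\eps}$, hence of $Z$-norm and $\mathcal C_2$-pairing $\lesssim\eps\log(1/\eps)$ by Lemma \ref{lemma1}(i). For the tail remainder in the first two identities, on its support $|\mu-\rho|\gtrsim\sqrt\eps$, so the kernel is $O(1/|\mu-\rho|)$, and convolving against the bounded $\omega^\pm$ gives a function that is $C^1$-bounded on $I_{\sqrt\eps}$ by $\lesssim\log(1/\eps)$, hence of $Z$-norm $\lesssim\eps\log(1/\eps)$, again by Lemma \ref{lemma1}(i). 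The delicate case is the tail remainder in the third identity: each of the two individual tails is only $O(\log(1/\eps))$, but their difference gains a factor $f_2(\mu)-f_1(\mu)$ from the difference of denominators, $\big|(f_1(\mu)-f_m(\rho))^2-(f_2(\mu)-f_m(\rho))^2\big|=|f_2(\mu)-f_1(\mu)|\,|f_1(\mu)+f_2(\mu)-2f_m(\rho)|$, and the point is that, because the two arcs were parametrized through the common first coordinate in \eqref{ko75}, the conjugate points $z(\alpha_1+\beta_1(\mu),t)$ and $z(\alpha_2+\beta_2(\mu),t)$ stay at distance $f_2(\mu)-f_1(\mu)=d+O(\mu^2)\approx d\approx\eps$ for $|\mu|\lesssim\sqrt\eps$; a direct estimate $\int_{\sqrt\eps\lesssim|\mu-\rho|\lesssim\varep_5^{10}}\frac{\eps(\eps+\rho^2)}{|\mu-\rho|^3}\,d\rho\lesssim\eps\log(1/\eps)$ then shows the tail difference is $\lesssim\eps\log(1/\eps)$ uniformly on $I_{\sqrt\eps/8}$, so $\big|\int E\,\phi\,d\alpha\big|\lesssim\eps\log(1/\eps)$ for every $\phi\in\mathcal C_2$ supported there. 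With \eqref{pi25} in hand, Proposition \ref{TechProp} yields $|G(0)|\lesssim\eps\log(1/\eps)$; since $\lambda$ is fixed and $\eps\approx d$, this is exactly $\big|[W_1(0)]_2-[W_2(0)]_2\big|\lesssim d\log(1/d)$. The main obstacles are tracking signs and orientations faithfully through the reduction to \eqref{pi26}, and the tail estimate for the third identity just described.
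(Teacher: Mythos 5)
Your overall route is the same as the paper's: reduce to Proposition \ref{TechProp} by rewriting \eqref{ko106} in the form \eqref{pi26}, with $\eps\approx d$, absorb the local corrections $O(|f_n'|^2)$, the far-field $S^\infty_t$ differences, and the cutoff-replacement tails into $E^\pm,E$, and estimate them exactly as the paper does (in particular your treatment of the delicate tail in the third identity, using $f_2(\mu)-f_1(\mu)=d+O(\mu^2)\approx\eps$ on $|\mu|\lesssim\sqrt\eps$, matches the paper's bound for its $E_3$). Your preliminary rescaling $\mu\mapsto\lambda\mu$ to secure the hypothesis $f(\alpha)+g(\beta)\geq-\eps$ in \eqref{pi1} is a legitimate refinement: the paper applies the proposition directly with no comment on this condition, and your observation that the Birkhoff--Rott structure, the local terms and $W_n(0)$ are unchanged while the second derivatives shrink by $\lambda$ makes that hypothesis verifiable; treating the regime $d\gtrsim 1$ trivially is also fine.

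There is, however, one concrete step that fails as written, and it is precisely the sign bookkeeping you deferred. With your assignment $f:=-F_1-\eps$, $g:=F_2-\eps$, the identities \eqref{pi26} cannot be reproduced by any signed multiples $\omega^\pm$ of $\omega_1,\omega_2$ (the functions in \eqref{ko105}). Indeed, from \eqref{ko106} the first component on the lower arc reads, in your labels, $[W_1]_1=\tfrac{\omega_1}{2(1+|f_1'|^2)}+T_{1,f}\omega_1+T_{3,(f,g)}\omega_2+\dots$, so the self-interaction term $T_{1,f}$ and the local term enter with the \emph{same} sign relative to $\omega_1$; the first identity of \eqref{pi26} requires $T_{1,f}\omega^+$ together with $-\tfrac12\omega^+$, i.e.\ \emph{opposite} signs, and setting $\omega^+=\pm\omega_1$ flips both terms simultaneously, so the mismatch cannot be repaired; the analogous computation on the upper arc breaks the second identity in the same way. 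The reason is that reflecting exactly one of the two arcs flips the sign of the $T_1$-kernel but not of the local $\omega/2$ term, so \eqref{pi26} forces which arc is reflected; this sign pattern is not cosmetic, because it is what produces the elliptic combination $I+4M^2$ (rather than $I-4M^2$) in the proof of Proposition \ref{TechProp}, i.e.\ it is where the two-fluid structure is actually used. The fix is one line: take, as the paper does, $f=f_2-\eps$, $g=-f_1-\eps$, $\omega^+=\omega_2$, $\omega^-=-\omega_1$ (rescaled if you keep your $\lambda$), with $F^+=-[W_2]_1$, $F^-=-[W_1]_1$ and $G=[W_1]_2-[W_2]_2$; then all three identities in \eqref{pi26} hold with error terms exactly of the type you estimated (compare \eqref{las2}), and the rest of your argument goes through verbatim.
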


\begin{proof}[Proof of Lemma \ref{diffvel}] We would like to apply Proposition \ref{TechProp}. Let
\begin{equation*}
\eps:=d/2,\quad g(\rho):=-f_1(\rho)-\eps,\quad f(\rho):=f_2(\rho)-\eps,\quad \omega^-(\rho)=-\omega_1(\rho),\quad \omega^+(\rho):=\omega_2(\rho).
\end{equation*}
Then, using the definitions \eqref{pi7},
\begin{equation}\label{las1}
\begin{split}
T_{1,f}\omega^+(\alpha)&=\frac{1}{2\pi}p.v.\int_{\mathbb{R}}\frac{f_2(\alpha)-f_2(\beta)}{(\alpha-\beta)^2+(f_2(\alpha)-f_2(\beta))^2}\omega_2(\beta)\psi_{\sqrt{\eps}}(\alpha-\beta)\,d\beta,\\
-T_{3,(f,g)}\omega^-(\alpha)&=\frac{1}{2\pi}\int_{\mathbb{R}}\frac{f_2(\alpha)-f_1(\beta)}{(\alpha-\beta)^2+(f_2(\alpha)-f_1(\beta))^2}\omega_1(\beta)\psi_{\sqrt{\eps}}(\alpha-\beta)\,d\beta,\\
T_{1,g}\omega^-(\alpha)&=\frac{1}{2\pi}p.v.\int_{\mathbb{R}}\frac{f_1(\alpha)-f_1(\beta)}{(\alpha-\beta)^2+(f_1(\alpha)-f_1(\beta))^2}\omega_1(\beta)\psi_{\sqrt{\eps}}(\alpha-\beta)\,d\beta,\\
-T_{3,(g,f)}\omega^+(\alpha)&=\frac{1}{2\pi}\int_{\mathbb{R}}\frac{f_1(\alpha)-f_2(\beta)}{(\alpha-\beta)^2+(f_1(\alpha)-f_2(\beta))^2}\omega_2(\beta)\psi_{\sqrt{\eps}}(\alpha-\beta)\,d\beta,\\
\end{split}
\end{equation}
and
\begin{equation}\label{las2}
\begin{split}
T_{4,(g,f)}\omega^+(\alpha)&=\frac{1}{2\pi}\int_{\mathbb{R}}\frac{\alpha-\beta}{(\alpha-\beta)^2+(f_1(\alpha)-f_2(\beta))^2}\omega_2(\beta)\psi_{\sqrt{\eps}}(\alpha-\beta)\,d\beta,\\
T_{2,f}\omega^+(\alpha)&=\frac{1}{2\pi}p.v.\int_{\mathbb{R}}\frac{\alpha-\beta}{(\alpha-\beta)^2+(f_2(\alpha)-f_2(\beta))^2}\omega_2(\beta)\psi_{\sqrt{\eps}}(\alpha-\beta)\,d\beta,\\
T_{4,(f,g)}\omega^-(\alpha)&=-\frac{1}{2\pi}\int_{\mathbb{R}}\frac{\alpha-\beta}{(\alpha-\beta)^2+(f_2(\alpha)-f_1(\beta))^2}\omega_1(\beta)\psi_{\sqrt{\eps}}(\alpha-\beta)\,d\beta,\\
T_{2,g}\omega^-(\alpha)&=-\frac{1}{2\pi}p.v.\int_{\mathbb{R}}\frac{\alpha-\beta}{(\alpha-\beta)^2+(f_1(\alpha)-f_1(\beta))^2}\omega_1(\beta)\psi_{\sqrt{\eps}}(\alpha-\beta)\,d\beta.
\end{split}
\end{equation}

We examine now the formulas \eqref{ko106}
\begin{equation}\label{las2.5}
\begin{split}
W_n(\mu)&=\frac{\omega_n(\mu)(1,f'_n(\mu))}{2(1+|f'_n(\mu)|^2)}+\widetilde{R}^{-1}\big(S_t^\infty(z(\alpha_n+\beta_n(\mu)))\big)\\
&+\sum_{m=1}^2\frac{1}{2\pi}p.v.\int_{\mathbb{R}}\frac{\big(-f_n(\mu)+f_m(\rho),\mu-\rho\big)}{|\mu-\rho|^2+|f_n(\mu)-f_m(\rho)|^2}\omega_m(\rho)\psi_{\varep_5^{10}}(\beta_m(\rho))\,d\rho.
\end{split}
\end{equation}
Observe that
\begin{equation}\label{las3}
\begin{split}
&-[W_1(\mu)]_1=\frac{\omega^-(\mu)}{2}+T_{1,g}\omega^-(\mu)-T_{3,(g,f)}\omega^+(\mu)+E^-(\mu),\\
&-[W_2(\mu)]_1=-\frac{\omega^+(\mu)}{2}-T_{3,(f,g)}\omega^-(\mu)+T_{1,f}\omega^+(\mu)+E^+(\mu),
\end{split}
\end{equation}
where
\begin{equation*}
\begin{split}
&E^-=E^-_1+E^-_2+E^-_3,\\
&E^-_1(\mu):=\frac{\omega_1(\mu)|f'_1(\mu)|^2}{2(1+|f'_1(\mu)|^2)},\\
&E^-_2(\mu):=-\big[\widetilde{R}^{-1}\big(S_t^\infty(z(\alpha_1+\beta_1(\mu)))\big)\big]_1,\\
&E^-_3(\mu):=\sum_{m=1}^2\frac{1}{2\pi}p.v.\int_{\mathbb{R}}\frac{f_1(\mu)-f_m(\rho)}{|\mu-\rho|^2+|f_1(\mu)-f_m(\rho)|^2}\omega_m(\rho)\big[\psi_{\varep_5^{10}}(\beta_m(\rho))-\psi_{\sqrt{\eps}}(\mu-\rho)\big]\,d\rho,
\end{split}
\end{equation*}
and
\begin{equation*}
\begin{split}
&E^+=E^+_1+E^+_2+E^+_3,\\
&E^+_1(\mu):=\frac{\omega_2(\mu)|f'_2(\mu)|^2}{2(1+|f'_2(\mu)|^2)},\\
&E^+_2(\mu):=-\big[\widetilde{R}^{-1}\big(S_t^\infty(z(\alpha_2+\beta_2(\mu)))\big)\big]_1,\\
&E^+_3(\mu):=\sum_{m=1}^2\frac{1}{2\pi}p.v.\int_{\mathbb{R}}\frac{f_2(\mu)-f_m(\rho)}{|\mu-\rho|^2+|f_2(\mu)-f_m(\rho)|^2}\omega_m(\rho)\big[\psi_{\varep_5^{10}}(\beta_m(\rho))-\psi_{\sqrt{\eps}}(\mu-\rho)\big]\,d\rho.
\end{split}
\end{equation*}

We have to verify the assumptions \eqref{pi25} in Proposition \ref{TechProp}. More precisely, we show that
\begin{equation}\label{las4}
\sum_{m=1}^3\big[\|E^+_m\|_Z+\|E^-_m\|_Z\big]\lesssim d\log(1/d).
\end{equation}
The bounds on $\|E^+_2\|_Z$ and $\|E^-_2\|_Z$ follow from \eqref{pi10} and \eqref{ko102}. Similarly, the bounds on $\|E^+_1\|_Z$ and $\|E^-_1\|_Z$ follow from \eqref{pi11} and the estimate
\begin{equation*}
|E^-_1(\mu)|+|E^+_1(\mu)|\leq |\omega_1(\mu)||f'_1(\mu)|^2+|\omega_2(\mu)||f'_2(\mu)|^2\lesssim\mu^2,
\end{equation*}
which is a consequence of \eqref{ko76}. Finally, to bound $\|E^+_3\|_Z$ and $\|E^-_3\|_Z$ we use again \eqref{pi10}. It suffices to prove that
\begin{equation}\label{las5}
\Big\|\int_{\mathbb{R}}\frac{f_n(\mu)-f_m(\rho)}{|\mu-\rho|^2+|f_n(\mu)-f_m(\rho)|^2}\omega_m(\rho)\big[\psi_{\varep_5^{10}}(\beta_m(\rho))-\psi_{\sqrt{\eps}}(\mu-\rho)\big]\,d\rho\Big\|_{C^1_\mu(I_{\sqrt{\eps}/8})}\lesssim\log(1/\eps),
\end{equation}
for $m,n\in\{1,2\}$. Since $|\mu|\leq\sqrt\eps/8$, we may assume that the integral in $\rho$ in \eqref{las5} is taken over $|\rho|\geq\sqrt\eps/8$. Then the bound \eqref{las5} follows easily using the estimates $|\omega_m(\rho)|\lesssim 1$, $|f_m(\rho)|\lesssim \eps+\rho^2$, $|f'_m(\rho)|\lesssim |\rho|$, $m\in\{1,2\}$. This completes the proof of \eqref{las4}.

We consider now the function $\mu\to [W_1(\mu)]_2-[W_2(\mu)]_2$ and write, using \eqref{las2} and \eqref{las2.5},
\begin{equation*}
[W_1(\mu)]_2-[W_2(\mu)]_2=[T_{4,(g,f)}\omega^+(\mu)-T_{2,f}\omega^+(\mu)]+[T_{2,g}\omega^-(\mu)-T_{4,(f,g)}\omega^-(\mu)]+E(\mu),
\end{equation*}
where
\begin{equation*}
E=E_1+E_2+E_3,
\end{equation*}
\begin{equation*}
E_1(\mu):=\frac{\omega_1(\mu)f'_1(\mu)}{2(1+|f'_1(\mu)|^2)}-\frac{\omega_2(\mu)f'_2(\mu))}{2(1+|f'_2(\mu)|^2)},
\end{equation*}
\begin{equation*}
E_2(\mu):=\big[\widetilde{R}^{-1}\big(S_t^\infty(z(\alpha_1+\beta_1(\mu)))\big)\big]_2-\big[\widetilde{R}^{-1}\big(S_t^\infty(z(\alpha_2+\beta_2(\mu)))\big)\big]_2,
\end{equation*}
and
\begin{equation*}
\begin{split}
E_3(\mu):=\sum_{m=1}^2\frac{1}{2\pi}p.v.\int_{\mathbb{R}}&\Big[\frac{\mu-\rho}{|\mu-\rho|^2+|f_1(\mu)-f_m(\rho)|^2}-\frac{\mu-\rho}{|\mu-\rho|^2+|f_2(\mu)-f_m(\rho)|^2}\Big]\\
&\times\omega_m(\rho)\big[\psi_{\varep_5^{10}}(\beta_m(\rho))-\psi_{\sqrt{\eps}}(\mu-\rho)\big]\,d\rho.
\end{split}
\end{equation*}
The bounds \eqref{ko76} show that $|E_1(\mu)|\lesssim\eps+|\mu|$. Moreover, using \eqref{ko102}, 
\begin{equation*}
|E_2(\mu)|\lesssim \big|z(\alpha_1+\beta_1(\mu))-z(\alpha_2+\beta_2(\mu))\big|\lesssim \eps+|\mu|.
\end{equation*}
Finally, if $|\mu|\leq\sqrt\eps/8$ then we may assume that the integral in $\rho$ in the definition of $E_3$ is taken over $|\rho|\geq \sqrt\eps/8$ and estimate, using again \eqref{ko76},
\begin{equation*}
|E_3(\mu)|\lesssim \int_{|\rho|\geq \sqrt\eps/8}\frac{|\rho|\cdot\rho^2\eps}{\rho^4}\psi_(\rho)\,d\rho\lesssim \eps\log(1/ \eps).
\end{equation*}
Therefore, for any $\phi\in\mathcal{C}_2$ supported in $I_{\sqrt\eps/8}$,
\begin{equation*}
\Big|\int_{\mathbb{R}}E\cdot\phi\,d\alpha\Big|\lesssim \int_{|\alpha|\leq\sqrt\eps/8}\big[\eps\log(1/\eps)+|\alpha|\big]\frac{\eps}{\eps^2+\alpha^2}\,d\alpha\lesssim \eps\log(1/\eps).
\end{equation*}
Therefore all the conditions in \eqref{pi25} are verified, and the desired conclusion follows from \eqref{pi27}.
\end{proof}

We prove now a key integral inequality.

\begin{lemma}\label{intineq}
Assume that $I\subseteq [0,T]$ is a subinterval with the property that
\begin{equation}\label{las10}
\inf_{\alpha,\beta\in\mathbb{R}}F(z)(\alpha,\beta,t)\leq\varep_4\qquad\text{ for any }t\in I,
\end{equation}
where $\varep_4$ is the constant in Proposition \ref{chord-arc3}. For any $t\in I$
\begin{equation}\label{las11}
\mathcal{D}(t):=\sup_{\alpha,\beta\in\mathbb{R},\,|\alpha-\beta|\in[\varep_1,\varep_1^{-1}]}|z(\alpha,t)-z(\beta,t)|^{-2}.
\end{equation}
Then, for any $t_1\leq t_2\in I$,
\begin{equation}\label{las12}
\mathcal{D}(t_2)-\mathcal{D}(t_1)\lesssim \int_{t_1}^{t_2}\mathcal{D}(s)\log(\mathcal{D}(s))\,ds.
\end{equation}
\end{lemma}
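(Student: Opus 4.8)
The plan is to show that $\mathcal D$ is a Lipschitz function of $t$ on $[t_1,t_2]$ whose derivative is pointwise controlled by $\mathcal D\log\mathcal D$, and then integrate. I would set $\Phi(\alpha,\beta,t):=|z(\alpha,t)-z(\beta,t)|^2$ and $\rho(t):=\inf_{|\alpha-\beta|\in[\varepsilon_1,\varepsilon_1^{-1}]}\Phi(\alpha,\beta,t)$, so that $\mathcal D(t)=1/\rho(t)$. Since $|\partial_t\Phi(\alpha,\beta,t)|\lesssim 1$ on the constraint set (by \eqref{zsmooth}, which in particular bounds $z_t$), the function $\rho$ is Lipschitz on $[0,T]$; moreover the qualitative hypothesis \eqref{qual} yields a uniform positive lower bound on $\rho$, so $\mathcal D=1/\rho$ is Lipschitz on $[t_1,t_2]$. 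Hence $\mathcal D(t_2)-\mathcal D(t_1)=\int_{t_1}^{t_2}\mathcal D'(s)\,ds$, and since $\mathcal D(s)\gtrsim\varepsilon_4^{-2}\gg1$ on $I$ (so $\log\mathcal D(s)>0$), it suffices to prove $|\rho'(s)|\lesssim\rho(s)\log(1/\rho(s))$ for a.e.\ $s\in I$ with $\rho'(s)<0$, the case $\rho'(s)\ge0$ being trivial.

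Fix such an $s$. By \eqref{las10} and Lemma \ref{chord-arc}(ii) there is a conjugate pair $(\alpha_1,\alpha_2)$ at time $s$; set $d:=|z(\alpha_2,s)-z(\alpha_1,s)|=\sqrt{\rho(s)}$ and $e:=(z(\alpha_2,s)-z(\alpha_1,s))/d$ as in Proposition \ref{chord-arc3}. The constraint $|\alpha_1-\alpha_2|\in[\varepsilon_1,\varepsilon_1^{-1}]$ is independent of $t$, so $\rho(t)\le\Phi(\alpha_1,\alpha_2,t)$ for all $t$, with equality at $t=s$; the nonnegative function $t\mapsto\Phi(\alpha_1,\alpha_2,t)-\rho(t)$, being differentiable at $s$, has vanishing derivative there, which gives $\rho'(s)=\partial_t\Phi(\alpha_1,\alpha_2,s)=-2d\,e\cdot[z_t(\alpha_1,s)-z_t(\alpha_2,s)]$. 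This envelope (Danskin-type) step is exactly what avoids having to differentiate the minimizing pair $\alpha_1(s),\alpha_2(s)$, which need not even be continuous in $s$.

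Next I would remove the tangential reparametrization term. The kinematic equation (the second line of \eqref{syst}, cf.\ \eqref{ko51}) gives $z_t(\alpha_j,s)=v^1(\alpha_j,s)+\lambda_j\,z_\alpha(\alpha_j,s)$ for scalars $\lambda_j$, while \eqref{ko70.5} gives $e\cdot z_\alpha(\alpha_j,s)=0$; hence $\rho'(s)=-2d\,e\cdot[v^1(\alpha_1,s)-v^1(\alpha_2,s)]$. With $\widetilde R,\beta_j,W_j$ as in Proposition \ref{chord-arc3} and subsection \ref{BiRo}, the map $\widetilde R$ is the orthogonal part of a rigid motion with $\widetilde R(0,1)=e$, so $e\cdot v^1(\alpha_j,s)=[\widetilde R^{-1}(v^1(\alpha_j,s))]_2=[W_j(0)]_2$ (using $\beta_j(0)=0$). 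Thus $|\rho'(s)|=2d\,\big|[W_1(0)]_2-[W_2(0)]_2\big|$, and Lemma \ref{diffvel} yields $|\rho'(s)|\lesssim d\cdot d\log(1/d)\approx\rho(s)\log(1/\rho(s))$. Consequently $|\mathcal D'(s)|=|\rho'(s)|/\rho(s)^2\lesssim\mathcal D(s)\log\mathcal D(s)$, and integrating over $[t_1,t_2]$ gives \eqref{las12}.

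The entire analytic substance here is Lemma \ref{diffvel} — hence Proposition \ref{TechProp} — which is already available, so within the present lemma there is no serious obstacle: the only point requiring care is the non-smooth dependence of the conjugate pair on time, dealt with by the envelope argument, together with the bookkeeping that the tangential component of $z_t$ at a conjugate pair is orthogonal to $e$ and therefore drops out.
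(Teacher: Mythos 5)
Your proposal is correct, and its analytic core coincides with the paper's: at a conjugate pair you use \eqref{ko51} together with the orthogonality \eqref{ko17.2}/\eqref{ko70.5} to discard the tangential term $A^1 z_\alpha$, identify $e\cdot v^1(\alpha_j,s)$ with $[W_j(0)]_2$ via the rigid motion of Proposition \ref{chord-arc3}, and invoke Lemma \ref{diffvel}; this is exactly the computation leading to \eqref{las15}. Where you differ is the bookkeeping for the time-dependence of the minimizing pair. The paper freezes the pair chosen at the right endpoint $t_2$, proves the derivative bound for the frozen-pair quantity $\widetilde{\mathcal{D}}$ at $t=t_2$, uses the qualitative bound \eqref{qual} to let this inequality persist on a small (non-effective) interval $[t_2-\delta',t_2]$, and then telescopes \eqref{las16} over a partition of $[t_1,t_2]$. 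You instead note that $\rho=\mathcal{D}^{-1}$ is Lipschitz with an effective constant (an infimum of functions with $|\partial_t\Phi|\lesssim 1$) and bounded below by \eqref{qual}, so $\mathcal{D}$ is absolutely continuous, and at a.e.\ $s$ the envelope (Danskin) argument gives $\rho'(s)=\partial_t\Phi(\alpha_1,\alpha_2,s)$ without differentiating the pair itself; the pointwise a.e.\ inequality $|\mathcal{D}'(s)|\lesssim\mathcal{D}(s)\log\mathcal{D}(s)$ then integrates directly. This buys you a cleaner argument -- no persistence-in-a-neighborhood step and no covering/telescoping -- at the mild cost of invoking a.e.\ differentiability and the fundamental theorem of calculus for Lipschitz functions, whereas the paper stays with classical calculus on the frozen-pair function. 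In both versions \eqref{qual} is used only qualitatively (for you, to make $\mathcal{D}$ finite and Lipschitz; for the paper, to produce $\delta'$), so the constant in \eqref{las12} remains effective, and your observations that only the set where $\mathcal{D}'>0$ matters and that $\log\mathcal{D}>0$ on $I$ (since $\mathcal{D}\gtrsim\varep_4^{-2}$ by \eqref{las10} and Lemma \ref{chord-arc}) take care of the remaining sign issues.
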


\begin{proof}[Proof of Lemma \ref{intineq}] Assuming that $t_1,t_2\in I$ are fixed, we apply first Proposition \ref{chord-arc3} to fix points $\alpha_1=\alpha_1(t_2)$ and $\alpha_2=\alpha_2(t_2)$ such that
\begin{equation*}
\begin{split}
&|\alpha_1-\alpha_2|\in [2\varep_1,\varep_1^{-1}/2];\\
&|z(\alpha_1,t_2)-z(\alpha_2,t_2)|=\inf_{\alpha,\beta\in\mathbb{R},\,|\alpha-\beta|\in[\varep_1,\varep_1^{-1}]}|z(\alpha,t_2)-z(\beta,t_2)|\lesssim \varep_4.
\end{split}
\end{equation*}
For $t\in I$, $t\leq t_2$, we define
\begin{equation*}
\widetilde{\mathcal{D}}(t)=\widetilde{\mathcal{D}}_{t_2}(t):=|z(\alpha_1,t)-z(\alpha_2,t)|^{-2}.
\end{equation*}
Using \eqref{ko51} and \eqref{ko17.2}, we notice that 
\begin{equation*}
\begin{split}
\widetilde{\mathcal{D}}'(t_2)&=-2\widetilde{\mathcal{D}}(t_2)^2\big[[z(\alpha_1,t_2)-z(\alpha_2,t_2)]\cdot [z_t(\alpha_1,t_2)-z_t(\alpha_2,t_2)]\big]\\
&=-2\widetilde{\mathcal{D}}(t_2)^2\big[[z(\alpha_1,t_2)-z(\alpha_2,t_2)]\cdot [v^1(\alpha_1,t_2)-v^1(\alpha_2,t_2)]\big].
\end{split}
\end{equation*}
With the notation in subsection \ref{BiRo},
\begin{equation*}
\big|[z(\alpha_1,t_2)-z(\alpha_2,t_2)]\cdot [v^1(\alpha_1,t_2)-v^1(\alpha_2,t_2)]\big|=d\big|[W_1(0)]_2-[W_2(0)]_2\big|.
\end{equation*}
Using also Lemma \ref{diffvel} it follows that
\begin{equation}\label{las15}
|\widetilde{\mathcal{D}}'(t_2)|\lesssim \widetilde{\mathcal{D}}(t_2)\log (\widetilde{\mathcal{D}}(t_2)).
\end{equation}

Recalling the qualitative assumption \eqref{qual}, there is $\delta>0$ with the property that $F(z)(\alpha,\beta,t)\geq\delta$ for all $\alpha,\beta\in\mathbb{R}$ and $t\in[0,T]$.\footnote{We note that the small constant $\delta$ is not an effective constant, depending only on $A$ and $T$.} Therefore the inequality \eqref{las15} persists in a small neighborhood of the point $t_2$, i.e. there is $\delta'>0$ such that
\begin{equation*}
|\widetilde{\mathcal{D}}'(t)|\lesssim \widetilde{\mathcal{D}}(t)\log (\widetilde{\mathcal{D}}(t))\qquad\text{ for any }t\in [t_2-\delta',t_2]\cap[0,T].
\end{equation*}
Using the simple observation $\widetilde{\mathcal{D}}(t)\leq \mathcal{D}(t)$, it follows that
\begin{equation}\label{las16}
\mathcal{D}(t_2)=\widetilde{\mathcal{D}}(t_2)\leq \widetilde{\mathcal{D}}(t)+C\int_{t}^{t_2}\widetilde{\mathcal{D}}(s)\log(\widetilde{\mathcal{D}}(s))\,ds\leq\mathcal{D}(t)+C\int_{t}^{t_2}\mathcal{D}(s)\log(\mathcal{D}(s))\,ds,
\end{equation}
for any $t\in [t_2-\delta',t_2]\cap[0,T]$, where $C$ is a constant that may depend only on $A$ and $T$. The desired inequality \eqref{las12} follows by decomposing the interval $[t_1,t_2]$ into sufficiently many small intervals, and applying \eqref{las16}.  
\end{proof}

Finally, we can complete the proof of the main theorem.

\begin{proof}[Proof of Theorem \ref{MainThm}] Assume, for contradiction, that there is $\overline{t}\in[0,T]$ such that 
\begin{equation}\label{las17}
\inf_{\alpha,\beta\in\mathbb{R}}F(z)(\alpha,\beta,\overline{t})\leq\varep_4/K,
\end{equation}
where $K$ is a sufficiently large constant and $\varep_4$ is as in Proposition \ref{chord-arc3}. By continuity, there is $t_0\in[0,\overline{t}]$ such that
\begin{equation*}
\inf_{\alpha,\beta\in\mathbb{R}}F(z)(\alpha,\beta,t_0)=\varep_4/2\quad\text{ and }\quad \inf_{\alpha,\beta\in\mathbb{R}}F(z)(\alpha,\beta,t)\leq\varep_4/2\quad\text{ for any }\quad t\in[t_0,\overline{t}].
\end{equation*}
We apply Lemma \ref{intineq} on the interval $[t_0,\overline{t}]$. The inequality \eqref{las12} shows easily that
\begin{equation*}
\mathcal{D}(\overline{t})\leq\mathcal{D}(t_0)e^{C_0e^{C_0(\overline{t}-t_0)}}
\end{equation*}
for some constant $C_0$ that may depend only on $A$ and $T$. At the same time $\mathcal{D}(t_0)\approx \varep_4^{-2}$. Therefore $\mathcal{D}(\overline{t})\lesssim\varep_4^{-2}$, which shows that
\begin{equation*}
\inf_{\alpha,\beta\in\mathbb{R}}F(z)(\alpha,\beta,\overline{t})\gtrsim\varep_4.
\end{equation*}
This is in contradiction with \eqref{las17}, which completes the proof of the theorem. 
\end{proof}

\end{document}